\newcommand*{\mailto}[1]{\href{mailto:#1}{\nolinkurl{#1}}}
\numberwithin{equation}{section}
\newtheorem{theorem}{Theorem}[section]
\newtheorem{definition}[theorem]{Definition}
\newtheorem{lemma}[theorem]{Lemma}
\newtheorem{corollary}[theorem]{Corollary}
\newtheorem{proposition}[theorem]{Proposition}
\numberwithin{equation}{section}
\theoremstyle{definition}
\newtheorem{example}[theorem]{Example}
\begin{document}

\title[A new functional model for contractions]
{A new functional model for contractions}

\author[Yicao Wang]{Yicao Wang}
\address
{School of Mathematics, Hohai University, Nanjing 210098, China}

\baselineskip= 20pt
\begin{abstract}The paper presents a new functional model for completely non-unitary contractions on a Hilbert space. This model is based on the observation that the theory of contractions shares a common geometric basis with the extension theory of symmetric operators recently developed by the author in \cite{wang2024complex}. Compared with the now classical Sz.-Nagy-Foias model and the de Branges-Rovnyak model, ours is intrinsic in the sense that we need not construct a bigger space $H$ including the model space $\mathfrak{H}$ and realize the model operator on $\mathfrak{H}$ as the compression of a minimal unitary dilation on $H$. Our model space $\mathfrak{H}$ is constructed in a canonical and conceptually more direct manner and doesn't depend on the Sz. Nagy-Foias characteristic function explicitly. We also show how a contraction can be constructed from a marked Nevanlinna disc, which is the geometric analogue of the characteristic function.
\end{abstract}
\maketitle


\tableofcontents

\section{Introduction}
In  \cite{wang2024complex}, the author has proposed a complex geometric formalism to treat von Neumann's theory of self-adjoint extensions of symmetric operators. The theory is based on the geometry of Hermitian symmetric spaces of type $I_{p,q}$ and Nevanlinna curves in such spaces. In particular, the notion of Weyl curve $W_T(\lambda)$ associated to a simple symmetric operator $T$ played a central role there. $W_T(\lambda)$ is a Nevanlinna curve, containing all the unitarily invariant information of $T$. Conversely, given a Nevanlinna curve $N(\lambda)$, we can construct a simple symmetric operator $\mathfrak{T}$ acting in a reproducing kernel Hilbert space $\mathfrak{H}$ of holomorphic sections of a certain vector bundle over the upper and lower planes $\mathbb{C}_+\cup \mathbb{C}_-$. If $N(\lambda)$ is just the Weyl curve $W_T(\lambda)$, then the corresponding $\mathfrak{T}$ is unitarily equivalent to $T$. In this sense, $\mathfrak{T}$ is a \emph{model operator} of $T$ and $\mathfrak{H}$ is called the \emph{model space}.

A remarkable point is that, if a point $p$ in the non-compact Hermitian symmetric space where $W_T(\lambda)$ lives is given, then it can be used to introduce a coordinate system so that $W_T(\lambda)$ can be represented by an analytic contraction-valued (i.e., Schur) function defined on $\mathbb{C}_+$. Schur functions also appear in many other different areas, e.g., Pick interpolation problems, system theory and prediction theory. What interests us here is the theory of contractions on Hilbert spaces. In the classical Sz.-Nagy-Foias theory on contractions, a central object is the characteristic function $\Theta_T(\lambda)$ of a contraction $T$, which is the main component in constructing a functional model for $T$. $\Theta_T(\lambda)$ is again an operator-valued Schur function, though its domain is the unit disc $\mathbb{D}$ rather than the upper half plane $\mathbb{C}_+$. There is an alternative competitive model theory for contractions suggested by de Branges and Rovnyak \cite{de2015square}, which is also based on Schur functions. In this latter model, the model space is a reproducing kernel Hilbert space consisting of certain vector-valued holomorphic functions on $\mathbb{D}$, whose reproducing kernel is generated by the Schur function in use.

The similarity between the theory of contractions and that of symmetric operators might not be something new and even be sometimes obvious. However, as far as we know, there seems to be no clear explanation on this in the existing literature. One of the goals of the paper is thus to fill in this gap. We find that these two theories actually share the same geometric basis, which involves strong symplectic Hilbert spaces, their relevant Hermitian symmetric spaces, their isotropic subspaces and a related quotient construction. Without any essential modification, the way of defining Weyl curve in \cite{wang2024complex} can be applied to contractions as well. Then $\Theta_T(\lambda)$ turns out to be a specific representation of the underlying Weyl curve (see Prop.~\ref{p3}).

Now that the two theories share the common basis, can the methodology of \cite{wang2024complex} be used to produce a functional model for contractions? The second goal of the paper is to provide an affirmative answer to this question. In either the Sz.-Nagy-Foias model or the de Branges-Rovnyak model, given the characteristic function $\Theta_T(\lambda)$, the basic idea is to construct a minimal unitary dilation $\mathbb{U}$ acting on a much bigger Hilbert space $H$ and to realize the model operator $\mathfrak{T}$ as the compression of $\mathbb{U}$ to the model space $\mathfrak{H}\subseteq H$. In sharp contrast, our approach doesn't adopt this embedding viewpoint and can be viewed as being intrinsic in the sense that it doesn't involve a construction of $H$ and a unitary dilation on it, and that the role of $\Theta_T(\lambda)$ is only secondary. Our model space $\mathfrak{H}$ is a reproducing kernel Hilbert space consisting of certain holomorphic sections of a characteristic vector bundle over two copies of the unit disc $\mathbb{D}$. When a certain trivialization of the vector bundle is used and $\mathfrak{H}$ is interpreted as a space of vector-valued analytic functions, our result is much closer to the de Branges-Rovnyak model.

Our new model theory cannot be regarded as a surpassing of its two deep-reaching precursors, but it really sheds some new light on the model theories: A contraction $T$ contains two degrees of freedom, its unitary map part and its non-unitary map part. The model space $\mathfrak{H}$ only depends on the former and hence contractions with the same unitary map part all share the same model space. Besides, we take the viewpoint that to construct a model for $T$ is the same thing as to construct a model for the graph of $T$, which is a subspace of the symplectic orthogonal complement $A_T^{\bot_s}$ of the underlying isotropic subspace $A_T$ induced from the unitary map part of $T$. Then constructing a model for $T$ amounts to constructing a model for $A_T^{\bot_s}$ and putting on it an abstract "boundary condition" induced from the non-unitary map part of $T$ to cut the graph of the model operator $\mathfrak{T}$ out. This viewpoint is much more flexible because many operators sharing the same unitary map part as $T$ can be realized altogether in this universal manner.

The paper is organized as follows. In \S~\ref{sec2}, we collect the basics on strong symplectic Hilbert spaces and the corresponding non-compact Hermitian symmetric spaces. The material can mostly be found in \cite[\S~3]{wang2024complex}, but that on maximal positive-definite subspaces is motivated by the theory of contractions. \S~\ref{sec3} is devoted to outlining the geometric formalism developed in \cite[\S~4-5]{wang2024complex} to deal with symmetric operators. By comparison with \S~\ref{sec4}, the reader will find how the theory on symmetric operators and that on contractions are related to each other. Roughly speaking, they are connected by the Cayley transform. In \S~\ref{sec4.1}, we recall some basic facts on contractions and show how they can be viewed from the angle established in \S~\ref{sec2}. We can then define the Weyl curve $W_T(\lambda)$ of a contraction $T$. We haven't included proofs of the basic properties of $W_T(\lambda)$ since these would be clear as soon as one comes to realize the relation between the theory of symmetric operators and that of contractions. In \S~\ref{sec4.2}, we show how a canonical model of a completely non-unitary contraction $T$ can be constructed in terms of natural holomorphic vector bundles associated to $T$. A Nevanlinna disc together with a maximal positive-definite subspace is called a marked Nevanlinna disc in \S~\ref{sec5}. We show there how one can construct a contraction from a marked Nevanlinna disc. This corresponds to the well-known fact that from a Schur function on the unit disc a contraction can be constructed. Some of the relevant proofs are only sketched for the details can be obtained by modifying those in \cite[\S~5.2]{wang2024complex} slightly.
\begin{center}
\textbf{Conventions and notations}
\end{center}

 In this paper, all (finite-dimensional or infinite-dimensional) Hilbert spaces are complex and separable. We adopt the convention that the inner product $(\cdot, \cdot)$ is linear in the first variable and conjugate-linear in the second. The induced norm will be written as $\|\cdot\|$. If necessary, the inner product or norm is also denoted by $(\cdot, \cdot)_H$ or $\|\cdot\|_H$ to emphasize the underlying Hilbert space. $(\cdot, \cdot)$ is also used to denote an ordered pair and the context shall exclude the possible ambiguities. We use $\oplus$ to denote topological direct sums while $\oplus_\bot$ is used to denote orthogonal direct sums. For a subspace $V$, its orthogonal complement is $V^\bot$. The zero vector space will be just denoted by $0$. $\mathbb{B}(H_1, H_2)$ denotes the Banach space of bounded operators between the Hilbert spaces $H_1$ and $H_2$. If $H_1=H_2=H$, this would be simplified as $\mathbb{B}(H)$. The identity operator on $H$ will be just denoted by $Id$ if the underlying space is clear from the context. For a constant $c$, $c\times Id$ is often written simply as $c$ if no confusion arises. $D(A)$ is the domain of the operator $A$, and $\textup{ker}A$ (resp. $\textup{Ran}A$) is the kernel (resp. range) of $A$. For operators $A$ and $B$, we use $A\subset B$ to mean
$D(A)\subset D(B)$ and $B|_{D(A)}=A$. In this setting, $B$ is called an extension of $A$.
\section{Strong symplectic Hilbert spaces and related Hermitian symmetric spaces}\label{sec2}
In this section, we collect the basics concerning strong symplectic Hilbert spaces and the non-compact Hermitian symmetric spaces associated to them. This material lays the foundation for later sections. The presentation is sketchy and we refer the reader to \cite{wang2024complex} for most technical details.

Let $H$ be a complex Hilbert space whose dimension $\textup{dim}H\leq +\infty$.
\begin{definition}
 A strong symplectic structure on $H$ is a continuous sesquilinear form $[\cdot, \cdot]: H\times H\rightarrow \mathbb{C}$ such that\\
 i) For $x,y\in H$, $[x, y]$ is linear in $x$ and conjugate-linear in $y$;\\
 ii) $[y,x]=-\overline{[x,y]}$ for all $x,y\in H$;\\
 iii)  $[\cdot, \cdot]$ is non-degenerate in the sense that if $[x,y]=0$ for all $y\in H$, then $x=0$;\\
 iv) the map $\mathfrak{i}:H\rightarrow H^*$ defined by $\mathfrak{i}(x)=[\cdot, x]$ is surjective, where $H^*$ is the dual space of continuous linear functionals on $H$.\\
 The pair $(H, [\cdot, \cdot])$ will be called a strong symplectic Hilbert space and if no ambiguity arises, we just say $H$ is a strong symplectic Hilbert space.
  \end{definition}
 Note that for a strong symplectic structure $[\cdot, \cdot]$, $-i[\cdot, \cdot]$ is nothing else but a non-degenerate indefinite Hermitian inner product on $H$. We prefer our terminology here because we want to emphasize the similarity to real symplectic spaces. The signature of $-i[\cdot, \cdot]$ will be denoted by $(n_+,n_-)$ and called the signature of $[\cdot, \cdot]$ or $(H, [\cdot, \cdot])$. We mainly consider the case that both $n_\pm$ are nonzero.
\begin{example}\label{e3} Given two Hilbert spaces $H_+$, $H_-$, of dimension $n_+$ and $n_-$ respectively, then $H=H_+\oplus_\bot H_-$ can be equipped with a standard strong symplectic structure as follows: For $x=(x_1,x_2), y=(y_1,y_2)\in H_+\oplus_\bot H_-$, define
 \[[x,y]_{st}=i(x_1,y_1)_{H_+}-i(x_2,y_2)_{H_-}.\]
 This strong symplectic structure has signature $(n_+,n_-)$.
  \end{example}
  \begin{example}\label{e5}Given a Hilbert space $H$ of dimension $n$, on $\mathbb{H}:=H\oplus_\bot H$, beside the standard strong symplectic structure $[\cdot, \cdot]_{st}$ in Example \ref{e3} with $H_+=H_-=H$, there is an alternative non-standard strong symplectic structure defined via
  \[[(x_1,y_1),(x_2,y_2)]_{new}=(y_1,x_2)_H-(x_1,y_2)_H\]
  where $(x_i,y_i)\in H\oplus_\bot H$, $i=1,2$. This strong symplectic structure has signature $(n,n)$ (see Example \ref{e8}).
  \end{example}

   \begin{example}\label{e10}If $(H_i, [\cdot, \cdot]_i)$, $i=1,2$ are two strong symplectic Hilbert spaces, the direct sum of the two is the orthogonal direct sum $H_1\oplus_\bot H_2$, whose strong symplectic structure is defined as follows: For $(x_1,y_1), y=(x_2,y_2)\in H_1\oplus_\bot H_2$,
  \[[(x_1,y_1),(x_2,y_2)]:=[x_1,x_2]_1+[y_1,y_2]_2.\]
  \end{example}
 Let $H$ be a strong symplectic Hilbert space.
 \begin{definition}For a subspace $L\subseteq H$, the symplectic orthogonal complement $L^{\bot_s}$ is
\[L^{\bot_s}=\{x\in H|[x,y]=0,\ \forall y\in L\}.\]
\end{definition}
It can be found that for a (not necessarily closed) subspace $L$, $(L^{\bot_s})^{\bot_s}=\overline{L}$.
\begin{definition}A closed subspace $L\subseteq H$ is isotropic if $[x,y]=0$ for all $x,y\in L$.
\end{definition}
By definition, for an isotropic subspace $L$, $L\subseteq L^{\bot_s}$ and the quotient $L^{\bot_s}/L$ makes sense. If furthermore $L=L^{\bot_s}$, $L$ is called Lagrangian. Lagrangian subspaces only exist when $n_+=n_-$. For us, the following proposition is basic.
\begin{proposition}For an isotropic subspace $L$ of $H$, $L^{\bot_s}/L$ has a canonical structure of strong symplectic Hilbert space.
\end{proposition}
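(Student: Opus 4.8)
The plan is to define the candidate symplectic form on $L^{\bot_s}/L$ by descending $[\cdot,\cdot]$ and then verify all four axioms of Definition~2.1, the only delicate points being well-definedness, completeness of the quotient, and the surjectivity condition~(iv). First I would note that $L^{\bot_s}$ is closed (it is an intersection of kernels of the continuous functionals $[\cdot,y]$, $y\in L$) and contains $L$ since $L$ is isotropic, so $L^{\bot_s}/L$ is a well-defined Hilbert space in the quotient norm; identifying it isometrically with the orthogonal complement $L^{\bot_s}\ominus L$ inside $L^{\bot_s}$ gives a concrete model. For $\xi=x+L$ and $\eta=y+L$ in $L^{\bot_s}/L$, set $[\xi,\eta]_{\mathrm{quot}}:=[x,y]$. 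This is independent of representatives: replacing $x$ by $x+l$ with $l\in L$ changes $[x,y]$ by $[l,y]=0$ because $y\in L^{\bot_s}$ and $L$ is isotropic kills the symmetric counterpart; likewise for $\eta$. Sesquilinearity (axiom i) and the skew-Hermitian property (axiom ii) are then inherited directly from $[\cdot,\cdot]$ on $H$. Continuity (the sesquilinear form must be bounded) follows from $|[x,y]|\le C\|x\|\|y\|$ on $H$ together with the fact that one may choose the minimal-norm representatives, i.e. $\|x\|_{L^{\bot_s}\ominus L}=\|\xi\|_{\mathrm{quot}}$, so $|[\xi,\eta]_{\mathrm{quot}}|\le C\|\xi\|_{\mathrm{quot}}\|\eta\|_{\mathrm{quot}}$.

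Next I would prove non-degeneracy (axiom iii). Suppose $[\xi,\eta]_{\mathrm{quot}}=0$ for all $\eta\in L^{\bot_s}/L$; picking representatives this says $[x,y]=0$ for all $y\in L^{\bot_s}$, i.e. $x\in(L^{\bot_s})^{\bot_s}=\overline{L}=L$ (using the identity $(L^{\bot_s})^{\bot_s}=\overline{L}$ recorded just before the statement, and that $L$ is already closed). Hence $\xi=0$ in the quotient, which is exactly non-degeneracy.

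The main obstacle is axiom~(iv): the induced map $\mathfrak{i}_{\mathrm{quot}}:L^{\bot_s}/L\to (L^{\bot_s}/L)^*$, $\xi\mapsto[\cdot,\xi]_{\mathrm{quot}}$, must be \emph{surjective}, not merely injective (injectivity is (iii)). My plan here is to leverage surjectivity of $\mathfrak{i}:H\to H^*$ for the ambient space. Given a continuous functional $\varphi$ on $L^{\bot_s}/L$, pull it back via the quotient projection $\pi:L^{\bot_s}\to L^{\bot_s}/L$ to a continuous functional $\varphi\circ\pi$ on $L^{\bot_s}$, then extend it (Hahn–Banach, or orthogonal projection since we are in Hilbert space) to a continuous functional $\Phi$ on $H$. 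By axiom~(iv) for $H$ there is $x\in H$ with $\Phi=[\cdot,x]$. The issue is that a priori $x\notin L^{\bot_s}$, so I would correct it: decompose using the symplectic geometry of the pair $(L, L^{\bot_s})$. Concretely, because $L$ is isotropic with $L\subseteq L^{\bot_s}$, one has a topological direct sum decomposition $H = L^{\bot_s}\oplus M$ where $M$ is a closed complement that pairs non-degenerately with $L$ (this is the standard fact that an isotropic subspace of a strong symplectic Hilbert space admits a "hyperbolic" complement; it is implicit in the quotient construction of \cite{wang2024complex} and can be produced from a decomposition $H=H_+\oplus_\bot H_-$ adapted to $L$). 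Writing $x=x_0+m$ accordingly, for any $y\in L^{\bot_s}$ we have $[y,x]=[y,x_0]+[y,m]$; since $\Phi$ vanishes on $L$ (as $\varphi\circ\pi$ does) and $L$ pairs non-degenerately with $M$, one deduces the $M$-component's contribution is already accounted for, and after a further subtraction of an element of $L$ (which does not change the functional on $L^{\bot_s}$ modulo the isotropy of $L$) one obtains $x_0\in L^{\bot_s}$ with $[\cdot,x_0]|_{L^{\bot_s}} = \varphi\circ\pi$. Passing to the quotient, $[\cdot,\pi(x_0)]_{\mathrm{quot}}=\varphi$, giving surjectivity. Finally I would record the signature bookkeeping: if $L$ has dimension $d<\infty$ then $-i[\cdot,\cdot]_{\mathrm{quot}}$ has signature $(n_+-d,\,n_--d)$, since a maximal positive/negative decomposition of $H$ can be chosen compatibly with the isotropic $L$, and in general the reduction is "balanced" in the obvious sense; this is not needed for the bare statement but clarifies why the construction behaves well. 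I expect step~(iv) to be the one requiring genuine care, whereas i)--iii) are routine descent arguments.
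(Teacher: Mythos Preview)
Your approach is correct and matches the paper's (which is essentially a two-line sketch: define $[[x],[y]]_q:=[x,y]$, observe it is well-defined, and leave the rest to the reader). You have simply supplied the details the paper omits.

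One simplification worth noting for axiom~(iv): the hyperbolic-complement decomposition $H=L^{\bot_s}\oplus M$ and the subsequent correction step are unnecessary. Once you have extended $\varphi\circ\pi$ to $\Phi\in H^*$ and written $\Phi=[\cdot,x]$ for some $x\in H$, observe that $L\subseteq L^{\bot_s}$, so $\Phi|_L=(\varphi\circ\pi)|_L=0$. Thus $[l,x]=0$ for all $l\in L$, and by the skew-Hermitian property $[x,l]=-\overline{[l,x]}=0$, i.e.\ $x\in L^{\bot_s}$ already. No decomposition or subtraction is needed; passing to the quotient immediately gives $\mathfrak{i}_{\mathrm{quot}}(\pi(x))=\varphi$.
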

\begin{proof}Since $L^{\bot_s}$ is closed in $H$, $L^{\bot_s}/L$ has a canonical quotient Hilbert space structure. For $[x], [y]\in L^{\bot_s}/L$ where $x,y\in L^{\bot_s}$, the quotient strong symplectic structure is defined by $[[x],[y]]_q:=[x,y]$. It's easy to check this is well-defined. We leave other details to the interested reader.
\end{proof}
In the above sense, we shall say $L^{\bot_s}/L$ has a canonical quotient strong symplectic structure.
\begin{definition}A closed subspace $L\subseteq H$ is called maximally positive-definite in case for each $0\neq x\in L$, $-i[x,x]>0$ and $L$ cannot be contained properly in a subspace fulfilling a similar inequality. $L$ is called maximally completely positive-definite if $L$ is maximally positive-definite and
\[-i[x,x]\geq c_L\|x\|^2,\quad \forall\  x\in L\]
for a constant $c_L>0$. Maximal (completely) negative-definite subspaces of $H$ can be defined analogously.
\end{definition}

If $L$ is maximally completely positive-definite, then $L^{\bot_s}$ is maximally completely negative-definite and $H=L\oplus L^{\bot_s}$. In particular, $\dim L=n_+$  and $\dim L^{\bot_s}=n_-$. $-i[\cdot, \cdot]$ (resp. $i[\cdot, \cdot]$) restricted on $L$ (resp. $L^{\bot_s}$) is a positive-definite inner product and both $L$ and $L^{\bot_s}$ are Hilbert spaces in this sense. These inner products will be denoted by $(\cdot,\cdot)_\pm$. If a maximal completely positive-definite subspace $L\subseteq H$ is specified, we say $H$ is polarized or given a polarization by $L$.
\begin{example}In Example \ref{e3}, $L:=\{(x,0)\in H|x\in H_+\}$ is obviously a maximal completely positive-definite subspace of $H$. We shall call it the standard polarization of $H$. In Example \ref{e5}, with the decomposition $\mathbb{H}=H\oplus_\bot H$ in mind, we say $(\mathbb{H}, [\cdot, \cdot]_{new})$ is equipped with its non-standard polarization although neither copy of $H$ is maximally completely positive-definite.
\end{example}

Two strong sympletic Hilbert spaces $(H_1, [\cdot, \cdot]_1)$ and $(H_2, [\cdot, \cdot]_2)$ are said to be isomorphic, if there is a topological linear isomorphism $\Phi$ from $H_1$ to $H_2$ such that $[\Phi(x),\Phi(y)]_2=[x,y]_1$ for all $x,y\in H_1$. Note that here we \emph{don't} require that $\Phi$ be an isometry between $H_1$ and $H_2$. All strong symplectic Hilbert spaces with the same signature $(n_+,n_-)$ are isomorphic. In particular, all automorphisms of such a strong symplectic Hilbert space form a Banach-Lie group--the pseudo-unitary group $\mathbb{U}(n_+,n_-)$. By definition, an element in $\mathbb{U}(n_+,n_-)$ moves a maximal (completely) positive-definite subspace to another.
\begin{example}\label{e4}Given a strong symplectic Hilbert space $H$ and a polarization $L$, since $H=L\oplus L^{\bot_s}$, each $x\in H$ has an $L$-part $x_+$ and an $L^{\bot_s}$-part $x_-$. Then for $a,b\in H$,
\[[a,b]=i(a_+.b_+)_+-i(a_-,b_-)_-.\]
This of course means $\Phi(x)=(x_+,x_-)$ for all $x\in H$ is a symplectic isomorphism between $H$ and the standard strong symplectic Hilbert space in Example \ref{e3} with $H_+=L$ and $H_-=L^{\bot_s}$.
\end{example}
\begin{example}\label{e8}Given a Hilbert space $H$, on $\mathbb{H}:=H\oplus_\bot H$, we call the transform
\[\beta(x,y)=(\frac{y+ix}{\sqrt{2}},\frac{y-ix}{\sqrt{2}}),\quad \forall \ (x,y)\in \mathbb{H}\]
the Cayley transform on $\mathbb{H}$. It can be checked easily that $\beta$ is a symplectic isomorphism between $(\mathbb{H}, [\cdot,\cdot]_{new})$ and $(\mathbb{H}, [\cdot,\cdot]_{st})$.
\end{example}

For a strong symplectic Hilbert space $H$, let $W_\pm(H)$ be the set of all maximal completely positive/negative-definite subspaces. $W_+(H)$ (or $W_-(H)$) has a natural complex Banach manifold structure. If a polarization $L$ for $H$ is given, then any $W\in W_+(H)$ can be parameterized uniquely by an operator $B\in \mathbb{B}(L,L^{\bot_s})$ such that $\|B\|<1$, i.e.,
\[W=\{x+Bx\in H|x\in L\}\]
while
\[W^{\bot_s}=\{B^*x+x\in H|x\in L^{\bot_s}\}.\]
Via the symplectic isomorphism $\Phi$ in Example \ref{e4}, $W$ is turned into $\{(x,Bx)\in L\oplus_\bot L^{\bot_s}|x\in L\}$ while $W^{\bot_s}$ into $\{(B^*x,x)\in L\oplus_\bot L^{\bot_s}|x\in L^{\bot_s}\}$. In other words, a polarization provides a global coordinate chart for $W_+(H)$ or $W_-(H)$.

Given the polarization $L$, it is easy to see that a general maximal positive-definite subspace $W\subseteq H$ can also be represented in the above manner except that now the parameter $B$ is a pure contraction, i.e., for each nonzero $x\in L$, $\|Bx\|<\|x\|$. It is possible that $\|B\|=1$ in this case.
\begin{proposition}Let $W$ be a maximal positive-definite subspace of $H$. If either of $n_\pm$ is finite, then $W$ is also maximally completely positive-definite.
\end{proposition}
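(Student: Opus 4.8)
The plan is to combine the coordinate description of maximal positive-definite subspaces recalled right before the statement with the elementary fact that a \emph{pure} contraction whose domain or codomain is finite-dimensional has operator norm strictly less than $1$.

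Since $H$ is symplectically isomorphic to the standard strong symplectic Hilbert space $H_+\oplus_\bot H_-$ of Example~\ref{e3} of signature $(n_+,n_-)$, and since both a symplectic isomorphism $\Phi$ and its inverse are bounded, being maximally positive-definite is preserved under $\Phi$ (it preserves $-i[\cdot,\cdot]$, inclusions and closedness), and from $-i[\Phi x,\Phi x]=-i[x,x]$ together with $\|x\|\geq\|\Phi\|^{-1}\|\Phi x\|$ one sees that the estimate $-i[x,x]\geq c\|x\|^2$ is preserved up to a positive constant. Hence it suffices to treat the case $H=H_+\oplus_\bot H_-$ with its standard structure and polarization $L=H_+\oplus 0$, where at least one of $\dim H_+=n_+$, $\dim H_-=n_-$ is finite. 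By the parameterization recalled above, $W=\{(x,Bx):x\in H_+\}$ for some pure contraction $B\in\mathbb{B}(H_+,H_-)$, and for $w=(x,Bx)\in W$ we have $-i[w,w]_{st}=\|x\|^2-\|Bx\|^2$ and $\|w\|^2=\|x\|^2+\|Bx\|^2$.

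The key point is that $\|B\|<1$. The operator $B^*B\in\mathbb{B}(H_+)$ is positive, self-adjoint and \emph{compact}: it acts on a finite-dimensional space when $n_+<\infty$, and it has finite rank (at most $n_-$) when $n_-<\infty$. If $B=0$ the claim is trivial; otherwise the spectral theorem for compact self-adjoint operators yields a unit vector $v\in H_+$ with $B^*Bv=\|B\|^2v$, whence $\|Bv\|^2=(B^*Bv,v)=\|B\|^2$. As $v\neq 0$ and $B$ is a pure contraction, $\|Bv\|<\|v\|=1$, so $\|B\|^2=\|Bv\|^2<1$. Writing $q:=\|B\|<1$, for every $w=(x,Bx)\in W$ we then get
\[
-i[w,w]_{st}=\|x\|^2-\|Bx\|^2\geq(1-q^2)\|x\|^2\geq\tfrac{1-q^2}{2}\bigl(\|x\|^2+\|Bx\|^2\bigr)=\tfrac{1-q^2}{2}\,\|w\|^2 ,
\]
so the completeness estimate holds with $c_W=\tfrac{1-q^2}{2}>0$; since $W$ is maximally positive-definite by hypothesis, it is maximally completely positive-definite.

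I expect the only real obstacle to be the step $\|B\|<1$: one has to observe that finiteness of \emph{either} $n_+$ or $n_-$ already forces $B^*B$ to be compact, so that the supremum defining $\|B\|$ is attained and the pure-contraction hypothesis can be applied at an actual maximizing vector; everything else is the routine reduction to the standard model and a one-line estimate.
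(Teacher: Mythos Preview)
Your proof is correct and follows essentially the same approach as the paper: parameterize $W$ by a pure contraction $B$, observe that $B^*B$ is compact when either $n_\pm$ is finite, and use the spectral theorem to produce a unit eigenvector attaining the norm, which forces $\|B\|<1$ by the pure-contraction hypothesis. Your version is more explicit than the paper's (you spell out the reduction to the standard model and compute the constant $c_W=(1-\|B\|^2)/2$), but the core argument is identical.
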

\begin{proof}Choose a polarization $L$ for $H$ and let $W$ be parameterized by $B\in \mathbb{B}(L,L^{\bot_s})$. Then $B$ is a pure contraction. Note that
\[\|B\|^2=\sup_{\|x\|=1}(Bx,Bx)_-=\sup_{\|x\|=1}(B^*Bx,x)_+.\]
If either of $n_\pm$ is finite, $B^*B$ is of finite rank and thus a compact self-adjoint operator. If $\|B\|=1$, then the largest eigenvalue of $B^*B$ is 1. Let $x$ be a corresponding eigenvector. Then we should have $\|Bx\|=\|x\|$. This is a contradiction!
\end{proof}
Thus the distinction between a maximal positive-definite subspace and a maximal completely positive-definite subspace only occurs when both $n_\pm$ are infinite. The subtlety of a maximal positive-definite subspace is indicated by the following proposition.
\begin{proposition}\label{p4}If $H$ is a strong symplectic Hilbert space with signature $(\infty, \infty)$ and $W$ a maximal positive-definite subspace that is not maximally completely positive-definite, then $W\cap W^{\bot_s}=0$ and $W\oplus W^{\bot_s}$ is a proper dense subspace of $H$.
\end{proposition}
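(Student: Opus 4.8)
The plan is to fix a polarization $L$ of $H$ and to work in the coordinates of Example~\ref{e4}, identifying $H$ with $L\oplus_\bot L^{\bot_s}$ (equipped with the inner products $(\cdot,\cdot)_\pm$), so that
\[
W=\{(x,Bx):x\in L\},\qquad W^{\bot_s}=\{(B^*y,y):y\in L^{\bot_s}\}
\]
for a pure contraction $B\in\mathbb B(L,L^{\bot_s})$. Because $W$ is not maximally completely positive-definite, necessarily $\|B\|=1$: if instead $\|B\|<1$, then $-i[(x,Bx),(x,Bx)]=\|x\|^2-\|Bx\|^2\geq(1-\|B\|^2)\|x\|^2$ would yield $-i[w,w]\geq c\,\|w\|^2$ on $W$ for some $c>0$, making $W$ maximally completely positive-definite. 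The device I would carry through the whole argument is the bounded self-adjoint operator
\[
M=\begin{pmatrix}Id_L & B^*\\ B & Id_{L^{\bot_s}}\end{pmatrix},\qquad M(x,y)=(x+B^*y,\ Bx+y),
\]
on $L\oplus_\bot L^{\bot_s}$, whose range is precisely the image of $W+W^{\bot_s}$ and whose kernel is precisely the image of $W\cap W^{\bot_s}$.

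First I would check $W\cap W^{\bot_s}=0$, i.e.\ $\ker M=0$: if $(x,y)\in\ker M$ then $x=-B^*y$ and $y=-Bx$, hence $x=B^*Bx$; since $B$ is a pure contraction, $((Id-B^*B)x,x)=\|x\|^2-\|Bx\|^2>0$ whenever $x\neq0$, so $x=0$ and $y=0$. Density of $W\oplus W^{\bot_s}$ then follows at once, since $M$ is self-adjoint and injective: $\overline{W+W^{\bot_s}}=\overline{\textup{Ran}\,M}=(\ker M)^{\bot}=H$. (Alternatively, in purely symplectic terms, $(W+W^{\bot_s})^{\bot_s}=W^{\bot_s}\cap(W^{\bot_s})^{\bot_s}=W^{\bot_s}\cap W=0$, using $(W^{\bot_s})^{\bot_s}=\overline W=W$, whence $\overline{W+W^{\bot_s}}=0^{\bot_s}=H$.)

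It remains to show $W\oplus W^{\bot_s}$ is \emph{proper}; since it is dense, this is the same as $\textup{Ran}\,M$ not being closed. I would first observe $M\geq0$: for $v=(x,y)$,
\[
(Mv,v)=\|x\|^2+\|y\|^2+2\,\textup{Re}(Bx,y)\geq\|x\|^2-2\|x\|\,\|y\|+\|y\|^2=(\|x\|-\|y\|)^2\geq0,
\]
using $\|Bx\|\leq\|x\|$. Then I would show $M$ is not bounded below, which is exactly where $\|B\|=1$ enters: choosing unit vectors $x_n\in L$ with $\|Bx_n\|\to\|B\|=1$ and putting $v_n=(x_n,-Bx_n)$, one gets $(Mv_n,v_n)=\|x_n\|^2-\|Bx_n\|^2\to0$ while $\|v_n\|^2=1+\|Bx_n\|^2\to2$, so $\inf_{\|v\|=1}(Mv,v)=0$ and hence $0\in\sigma(M)$. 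An injective positive self-adjoint operator with $0$ in its spectrum has dense but non-closed range; therefore $W\oplus W^{\bot_s}$, being this range, is a proper dense subspace of $H$.

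The step I expect to be the real obstacle is this last one, and the difficulty is conceptual rather than computational: since $\|B\|=1$ need not be \emph{attained}, one generally cannot exhibit a concrete vector of $H$ lying outside $W\oplus W^{\bot_s}$, so properness cannot be argued directly and must instead be extracted from the failure of a uniform lower bound for $M$ — which is precisely the analytic phenomenon separating a maximal positive-definite subspace from a maximal completely positive-definite one. (By contrast, if $\|B\|<1$ one checks $(Mv,v)\geq(1-\|B\|)\|v\|^2$, so $M$ is invertible and $W\oplus W^{\bot_s}=H$, consistent with the earlier remark that a maximal completely positive-definite $W$ satisfies $H=W\oplus W^{\bot_s}$.)
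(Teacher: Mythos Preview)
Your proof is correct and follows essentially the same approach as the paper: both introduce the self-adjoint operator matrix $\mathfrak{B}=\left(\begin{smallmatrix}Id & B^*\\ B & Id\end{smallmatrix}\right)$ (your $M$), identify its range with $W+W^{\bot_s}$, and argue that $0$ lies in its continuous spectrum when $\|B\|=1$. You supply more detail---explicitly verifying $M\ge 0$, checking $\ker M=0$ via the pure-contraction property, and exhibiting a sequence $v_n$ showing $M$ is not bounded below---where the paper simply asserts that $\mathfrak{B}$ is boundedly invertible iff $\|B\|<1$ and that $0$ is not an eigenvalue.
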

\begin{proof}The first claim is clear. To prove the second, we choose a polarization $L$ for $H$ and let $W$ be parameterized by $B\in \mathbb{B}(L, L^{\bot_s})$. Then we must have $\|B\|=1$. To see if $W\oplus W^{\bot_s}$ is equal to $H$ or not, it is enough to check whether the operator matrix $\mathfrak{B}:=\left(
                                                                                                      \begin{array}{cc}
                                                                                                        Id & B^* \\
                                                                                                        B & Id \\
                                                                                                      \end{array}
                                                                                                    \right)
$ on $L\oplus_\bot L^{\bot_s}$ is surjective or not. Since $B$ is a pure contraction, we can find that 0 is not an eigenvalue of the self-adjoint operator $\mathfrak{B}$. It can also be seen easily that $\mathfrak{B}$ is boundedly invertible if and only if $\|B\|<1$. Thus 0 lies in the continuous spectrum of $\mathfrak{B}$ and $\textup{Ran}\mathfrak{B}$ is a proper dense subspace of $L\oplus_\bot L^{\bot_s}$.
\end{proof}
\emph{Remark}. Thus unlike a maximal completely positive-definite subspace $L$, a maximal positive-definite subspace $W$ may not introduce a decomposition on $H$ and won't provide a coordinate chart for $W_\pm(H)$.

Actually when both $n_\pm$ are finite, $W_+(H)$ is a non-compact irreducible Hermitian symmetric space of type $I_{n_+,n_-}$ and has dimension $n_+\cdot n_-$. Strictly speaking, when either $n_+$ or $n_-$ is infinite, $W_+(H)$ is an infinite-dimensional Banach symmetric space which is much more complicated. We will not distinguish between these two cases and simply call them Hermitian symmetric spaces of type $I_{n_+,n_-}$. A basic fact is that $W_+(H)$ is a homogeneous complex manifold of $\mathbb{U}(n_+,n_-)$. In fact, if $L$ is a chosen polarization for $H$ and $B\in \mathbb{B}(L, L^{\bot_s})$ parameterizes $W\in W_+(H)$, then w.r.t. the identification $H=L\oplus_\bot L^{\bot_s}$, the operator matrix
\[\left(
    \begin{array}{cc}
      (Id-B^*B)^{-1/2} & -B^*(Id-BB^*)^{-1/2} \\
      B(Id-B^*B)^{-1/2} & -(Id-BB^*)^{-1/2} \\
    \end{array}
  \right)
\]
lies in $\mathbb{U}(n_+,n_-)$, transforming $L$ (parameterized by 0) into $W$ (parameterized by $B$).

Let $A$ be an isotropic subspace of $H$. If the signature of the quotient strong symplectic structure on $A^{\bot_s}/A$ is $(n_+,n_-)$ and an isomorphism $\Phi$ between $A^{\bot_s}/A$ and the standard strong symplectic Hilbert space in Example \ref{e3} is chosen, we can extend $\Phi$ to $A^{\bot_s}$ by simply setting its value at $x\in A^{\bot_s}$ to be $\Phi([x])$. Let $\Gamma_\pm x$ be the $H_\pm$-part of $\Phi(x)$. Then we have the following abstract Green's formula
\begin{equation}[x,y]=i(\Gamma_+x,\Gamma_+y)_{H_+}-i(\Gamma_-x,\Gamma_-y)_{H_-},\quad \forall\ x,y\in A^{\bot_s}.\label{gre}\end{equation}
We shall call $(H_\pm, \Gamma_\pm)$ a \emph{boundary quadruple} for $A^{\bot_s}$. This generalizes the notion of boundary triplet for symmetric operators.

\section{Symmetric operators revisited}\label{sec3}
To motivate our approach to contractions and also for the convenience of comparison, in this section we sketch the basic formalism set up in \cite{wang2024complex} to deal with symmetric operators. The reader is referred to \cite{wang2024complex} for a detailed account.

\subsection{Symmetric operator and its Weyl curve}\label{sec3.1}
Let $H$ be a separable infinite-dimensional Hilbert space and $T$ a densely defined symmetric operator in $H$, i.e., $D(T)$ is dense in $H$ and $(Tx,y)=(x,Ty)$ for all $x,y\in D(T)$. Let $T^*$ be the conjugate of $T$. Basically $D(T)\subseteq D(T^*)$ and if further $D(T)=D(T^*)$, $T$ is called self-adjoint. We always assume that $T$ is closed, i.e., the graph of $T$ is a closed subspace of $\mathbb{H}:=H\oplus_\bot H$. With the graph inner product
\[(x,y)_{T}:=(x,y)+(T^*x,T^*y),\quad \forall\ x,\ y \in D(T^*), \]
$D(T^*)$ is a Hilbert space and $D(T)$ is a closed subspace. An extension of $T$ is an operator $\tilde{T}$ in $H$ such that $D(\tilde{T})\supseteq D(T)$ is a closed subspace of $D(T^*)$ and $\tilde{T}=T^*|_{D(\tilde{T})}$. Therefore, all extensions of $T$ are parameterized by points in the Grassmannian of closed subspaces of the quotient Hilbert space $\mathcal{B}_T:=D(T^*)/D(T)$.
$\mathcal{B}_T$ is a strong symplectic Hilbert space whose strong symplectic structure $[\cdot, \cdot]_T$ is defined as
\[[[x],[y]]_T=(T^*x,y)_H-(x,T^*y)_H.\]
Let $(n_+,n_-)$ be the signature of this strong symplectic structure. These are precisely the deficiency indices of $T$. We only consider the case that both $n_\pm$ are nonzero.

To facilitate later developments, we reformulate the above argument in another unfamiliar but equivalent form. Equip $\mathbb{H}$ with the non-standard strong symplectic structure $[\cdot, \cdot]_{new}$ in Example \ref{e5} and let $A_T\subseteq \mathbb{H}$ be the graph of $T$. Then that $T$ is symmetric means precisely that $A_T$ is isotropic. In this way, $A_T^{\bot_s}$ is precisely the graph of $T^*$ and $\mathcal{B}_T$ can be identified with the quotient strong symplectic Hilbert space $A_T^{\bot_s}/A_T$. An extension of $T$ now can be interpreted as an intermediate closed subspace between $A_T$ and $A_T^{\bot_s}$. Of particular interest in this paper are those extensions parameterized by maximal positive-definite subspaces of $\mathcal{B}_T$.

$\mathbb{H}$ has its non-standard polarization, i.e., $\mathbb{H}:=H\oplus_\bot H$. W.r.t. this, for each $\lambda\in \mathbb{C}$, define
\[W_\lambda:=\{(x,\lambda x)\in \mathbb{H}|x\in H\}.\]
It is easy to find that $W_\lambda\in W_\pm(\mathbb{H})$ for $\lambda\in \mathbb{C}_\pm $. Since $\mathbb{C}$ is of complex dimension 1, we call $W_\lambda$ with $\lambda\in \mathbb{C}$ the universal Weyl curve associated to the non-standard polarization of $\mathbb{H}$. For each $\lambda\in \mathbb{C}$, now define
\[N_\lambda:=W_\lambda\cap A_T^{\bot_s}=\{(x,\lambda x)\in \mathbb{H}|x\in D(T^*),\ T^*x=\lambda x\}.\]
It can be proved that for $\lambda\in \mathbb{C}_\pm$,
$\textup{dim}N_\lambda=n_\pm$. Let $W_T(\lambda)$ be the image of $N_\lambda$ in $\mathcal{B}_T$ under the quotient map $A_T^{\bot_s}\rightarrow \mathcal{B}_T$. Then $W_T(\lambda)$ is a maximal completely positive-definite (resp. negative-definite) subspace of $\mathcal{B}_T$ if $\lambda\in \mathbb{C}_+$ (resp. $\lambda\in \mathbb{C}_-$). It can also be proved that $(W_T(\lambda))^{\bot_s}=W_T(\bar{\lambda})$ for each $\lambda\in \mathbb{C}_+$. As a consequence, we have a \emph{holomorphic} curve
\[W_T(\lambda):\mathbb{C}_+\cup \mathbb{C}_-\rightarrow W_+(\mathcal{B}_T)\cup W_-(\mathcal{B}_T), \quad \lambda\mapsto W_T(\lambda).\]
 This $W_T(\lambda)$ is called the (two-branched) \emph{Weyl curve} of $T$, containing all unitarily invariant information of $T$. We also call the branch on $\mathbb{C}_+$ the (single-branched) Weyl curve because it totally determines the two-branched version.

 Though $\mathbb{H}$ is naturally polarized, the polarization is lost on the quotient $\mathcal{B}_T=A_T^{\bot_s}/A_T$. If a boundary quadruple $(H_\pm, \Gamma_\pm)$ is given for $A_T^{\bot_s}$, then for $\lambda\in \mathbb{C}_+$,
\[(\Gamma_+, \Gamma_-)W_T(\lambda)=\{(x,B(\lambda)x)\in H_+\oplus_\bot H_-|x\in H_+\}\]
where $B(\lambda)$ is a $\mathbb{B}(H_+,H_-)$-valued analytic function on $\mathbb{C}_+$ such that $\|B(\lambda)\|<1$. This so-called contractive Weyl function is thus an operator-valued Schur function on $\mathbb{C}_+$. Due to the fact $(W_T(\lambda))^{\bot_s}=W_T(\bar{\lambda})$,
\[(\Gamma_+, \Gamma_-)W_T(\lambda)=\{(B(\bar{\lambda})^*x,x)\in H_+\oplus_\bot H_-|x\in H_-\}\]
for $\lambda\in \mathbb{C}_-$.

Note that $\dim \textup{ker}(T^*-\lambda)=n_\pm$ for any $\lambda\in \mathbb{C}_\pm$. If $\textup{span}\{\cup_{\lambda\in \mathbb{C}\backslash \mathbb{R}}\textup{ker}(T^*-\lambda)\}$ is dense in $H$, $T$ is said to be simple. Simplicity of $T$ means precisely that $T$ cannot be written as the orthogonal direct sum of a nontrivial self-adjoint operator and another symmetric operator \cite[\S~3.4]{behrndt2020boundary}. A symmetric operator $T$ can always be decomposed into the direct sum of a self-adjoint part and a simple part,  and its Weyl curve $W_T(\lambda)$ only captures the information of its simple part. From now on, we only consider simple symmetric operators. We should also mention that the above formalism applies to simple symmetric operators that are not densely defined as well.
\subsection{A canonical functional model for symmetric operators}\label{sec3.2}
Though the use of reproducing kernel Hilbert spaces is well-known when people try to construct functional models for simple symmetric operators, before the work \cite{wang2024complex} only reproducing kernel Hilbert spaces of vector-valued functions were applied. In our opinion the general (but possibly less popular) vector bundle version is much more natural for dealing with symmetric operators. Reproducing kernels on vector bundles were proposed in \cite{bertram1998reproducing} and the paper \cite{koranyi2011classification} also contains a concise introduction to the topic when the underlying bundle is holomorphic. For our purpose, the focus is mainly on holomorphic vector bundles over an open subset $\Omega\subset\mathbb{C}$ and the relevant Hilbert spaces then consist of holomorphic sections of the underlying vector bundles. For basics on holomorphic vector bundles, we refer the reader to \cite{griffiths2014principles}.

Assume that $E\rightarrow M$ is a complex vector bundle\footnote{We allow the rank of $E$ to be infinite.} over a topological space $M$ and $\mathcal{H}$ a Hilbert space of continuous sections of $E$. Let $E^\dag$ be the conjugate-linear dual\footnote{We make the natural identification $(E^\dag)^\dag=E$.} of $E$. The pairing between $\varphi\in E_x^\dag$ and $\psi\in E_x$ for $x\in M$ will be denoted by $\varphi(\psi)=((\varphi, \psi))$. Suppose the evaluation map \[ev_x: \mathcal{H}\rightarrow E_x\subseteq E, \ s\mapsto s(x)\]
is continuous for any $x\in M$. By Riesz representation theorem, we have a $\mathbb{C}$-linear map $ev_x^\dag: E_x^\dag\rightarrow \mathcal{H}$ defined by (the so-called reproducing property)
 \[(ev_x^\dag(\varphi),v)_\mathcal{H}=((\varphi, ev_x(v)))=((\varphi, v(x)))\]
 for any $\varphi\in E_x^\dag$ and $v\in \mathcal{H}$.
 We set $K(x,y)=ev_xev_y^\dag$, which is a linear map from $E^\dag_y$ to $E_x$ and called the reproducing kernel of $\mathcal{H}$ on $E$. $K(x,y)$ is positive in the following sense: For any points $x_j\in M$, $j=1,\cdots, p$ and $\varphi_j\in E_{x_j}^\dag$, we have \[\sum_{i,j}((\varphi_i,K(x_i,x_j)\varphi_j))\geq 0,\] which is trivially
 \[(\sum_iev^\dag_{x_i}\varphi_i,\sum_jev^\dag_{x_j}\varphi_j)_\mathcal{H}\geq 0.\]
 Conversely, any continuous and positive kernel $K$ on $E$ is always the reproducing kernel of a Hilbert space $\mathfrak{H}(K)$ of continuous sections of $E$. $\mathcal{H}$ can be constructed as follows. Fix $y\in M$ and $\varphi\in E^\dag_y$, and let $x$ run over $M$. Then $K(\cdot, y)\varphi$ is a continuous section of $E$. Define
 $$\breve{\mathfrak{H}}(K):=\textup{span}\{K(\cdot, y)\varphi|y\in M, \varphi\in E^\dag_y\}.$$
  On $\breve{\mathfrak{H}}(K)$, we can give a (positive-definite) inner product defined by
 \[(K(\cdot, y_1)\varphi_1,K(\cdot, y_2)\varphi_2)_K=((K(y_2,y_1)\varphi_1,\varphi_2))\]
 for $\varphi_i\in E^\dag_{y_i}$, $i=1,2$. Completing $\breve{\mathfrak{H}}(K)$ w.r.t. this inner product then gives rise to the required $\mathfrak{H}(K)$. We shall say $\mathfrak{H}(K)$ is generated by the kernel $K$.

 Now if $E$ is holomorphic over a complex manifold $M$ and $\mathcal{H}$ a reproducing kernel Hilbert space of holomorphic sections of $E$, then the kernel $K(x,y)$ is additionally
 \begin{itemize}
   \item sesqui-analytic, i.e., holomorphic in $x$, anti-holomorphic in $y$,
     \item locally uniformly bounded in the sense that $K(x,x)$ is uniformly bounded over any compact subset of $M$.
 \end{itemize}
 Conversely, if a positive kernel $K(x,y)$ on $E$ satisfying these properties is given, the sections of $E$ in the corresponding reproducing kernel Hilbert space $\mathfrak{H}(K)$ are holomorphic.

In this formalism, \cite{wang2024complex} constructed a canonical functional model for simple symmetric operators in a rather direct way and didn't use the contractive Weyl function explicitly. The construction goes as follows.

Given a simple symmetric operator $T$ in $H$ with deficiency indices $(n_+,n_-)$, on $\mathbb{C}_+\cup \mathbb{C}_-$ there is a natural holomorphic Hermitian vector bundle $E$: At $\lambda\in \mathbb{C}_+\cup \mathbb{C}_-$, the fiber $E_\lambda$ is just $\textup{ker}(T^*-\lambda)$ and the Hermitian structure on $E_\lambda$ is obtained by restricting the inner product on $H$. That $E$ is holomorphic is a consequence of the holomorphicity of $W_T(\lambda)$. Exchanging the fibers of $E$ at $\lambda$ and $\bar{\lambda}$, we obtain an anti-holomorphic vector bundle $F^\dag$. Let $F$ be the conjugate linear dual of $F^\dag$. Then $F$ is again a holomorphic vector bundle equipped with the induced Hermitian structure.

Let $\iota_\lambda: F_\lambda^\dag \rightarrow H$ be the natural inclusion and $\iota_\lambda^\dag$ the conjugate of $\iota_\lambda$ defined by
\[((\iota_\lambda^\dag x, \omega))=(x,\iota_\lambda \omega)_H=(x,\omega)_H,\quad \forall\ \omega\in F^\dag_\lambda.\]
Then we get a reproducing kernel $K(\lambda,\mu):=\iota_\lambda^\dag \iota_\mu: F^\dag_\mu\rightarrow F_\lambda$. The reproducing kernel Hilbert space generated by $K(\lambda,\mu)$ is just our model space $\mathfrak{H}$. Elements in $\mathfrak{H}$ can be described in a more direct way: Given $x\in H$, we can associate a holomorphic section $\hat{x}$ of $F$. At $\lambda$ we define $\hat{x}(\lambda)$ via
\[((\hat{x}(\lambda), \omega))=(x, \iota_\lambda \omega)_H,\quad \forall\ \omega\in F^\dag_\lambda.\]
In this way, obviously $\hat{x}(\cdot)=\iota_\cdot^\dag x$. Due to simplicity of $T$, this map $x\mapsto \hat{x}$ is injective and actually a unitary map from $H$ to $\mathfrak{H}$. Let $\mathcal{X}$ be the multiplication by the independent variable $\lambda$ on holomorphic sections of $F$ and define
\[D(\mathfrak{T}):=\{s\in \mathfrak{H}|\mathcal{X}s\in \mathfrak{H}\}.\]
Denote the restriction of $\mathcal{X}$ on $D(\mathfrak{T})$ by $\mathfrak{T}$. Then $\mathfrak{T}$ is the model operator of $T$, i.e., $T$ and $\mathfrak{T}$ are unitarily equivalent via the "Fourier transform" $x\mapsto \hat{x}$.

Alternatively, the model can also be constructed from the Weyl curve $W_T(\lambda)$. More generally, let $H$ be a strong symplectic Hilbert space with signature $(n_+,n_-)$ and $W_+(H)$ the space of maximal completely positive-definite subspaces. A holomorphic map $N(\lambda):\mathbb{C}_+\rightarrow W_+(H)$ is called a Nevanlinna curve. We can construct a simple (not necessarily densely defined) symmetric operator $\mathfrak{T}$ from any given Nevanlinna curve $N(\lambda)$ as follows.

We extend $N(\lambda)$ to $\mathbb{C}_+\cup \mathbb{C}_-$ by setting $N(\lambda)=(N(\bar{\lambda}))^{\bot_s}\in W_-(H)$ for $\lambda\in \mathbb{C}_-$. $W_+(H)\cup W_-(H)$ as a manifold has a tautological holomorphic Hermitian vector bundle, i.e., at $p\in W_+(H)\cup W_-(H)$, the fiber is precisely the subspace of $H$ parameterized by $p$ and the Hermitian structure is just the one obtained by restricting the strong symplectic structure. This vector bundle can be pulled back via $N(\lambda)$ and results in a holomorphic Hermitian vector bundle $E$ over $\mathbb{C}_+\cup \mathbb{C}_-$. Exchanging the fibers of $E$ at $\lambda$ and $\bar{\lambda}$ for each $\lambda\in \mathbb{C}_+$ produces a vector bundle $F^\dag$, which is anti-holomorphic. Let $F$ be the conjugate linear dual of $F^\dag$.

Note that at each $\lambda\in \mathbb{C}_+\cup \mathbb{C}_-$ there is the decomposition $H=F^\dag_\lambda\oplus F^\dag_{\bar{\lambda}}$. Denote the projection along $F^\dag_{\bar{\lambda}}$ onto $F^\dag_\lambda$ by $\mathcal{P}_\lambda$. For $\lambda, \mu\in \mathbb{C}_+\cup \mathbb{C}_-$, we can construct a map $\mathcal{Q}(\lambda, \mu)$ from $F^\dag_\mu$ to $F_\lambda$: If $\omega\in F^\dag_\mu$,
then $\mathcal{P}_\lambda\omega\in F^\dag_\lambda$ and $(\mathcal{P}_\lambda\omega,\cdot)_\lambda$ (the Hermitian structure on the fiber is used) is a conjugate-linear functional on $F^\dag_\lambda$. Consequently, $(\mathcal{P}_\lambda\omega,\cdot)_\lambda\in F_\lambda$ and we simply set $\mathcal{Q}(\lambda, \mu)\omega=(\mathcal{P}_\lambda\omega,\cdot)_\lambda$. Now for $\lambda\neq \bar{\mu}$ we define
\[K(\lambda,\mu)=i\frac{\mathcal{Q}(\lambda, \mu)}{\lambda-\bar{\mu}}.\]
As for $\lambda=\bar{\mu}$, $\mathcal{Q}(\bar{\mu},\mu)$ is certainly zero. It turns out that $\mathcal{Q}(\lambda, \mu)$ as an analytic function in $\lambda$ has a zero at $\bar{\mu}$, and we can simply set $K(\bar{\mu},\mu)$ to be the limit of the above formula as $\lambda$ approaches $\bar{\mu}$.

In terms of a chosen polarization of $H$, it can be checked that the above $K(\lambda,\mu)$ is a reproducing kernel on the holomorphic vector bundle $F$. Let $\mathfrak{H}$ be the reproducing kernel Hilbert space generated by $K(\lambda, \mu)$. Then multiplication by the independent variable is a simple symmetric operator $\mathfrak{T}$ in $\mathfrak{H}$. The Weyl curve of $\mathfrak{T}$ is congruent to $N(\lambda)$, i.e., they differ from each other by a symplectic isomorphism between the underlying strong symplectic Hilbert spaces. In particular, if $N(\lambda)$ is the Weyl curve of a given simple symmetric operator $T$, then $\mathfrak{T}$ is unitarily equivalent to $T$.
\section{A functional model for contractions}\label{sec4}
In this section, we show how the methodology of \cite{wang2024complex} can be adapted to give a new functional model for contractions. The construction is direct in the sense that the model space $\mathfrak{H}$ can be derived from natural geometric data associated to the contraction under consideration, without mentioning the characteristic function $\Theta_T(\lambda)$ at all. In either the Sz.-Nagy-Foias model or the de Branges-Rovnyak model, $\Theta_T(\lambda)$ must be obtained first and then is used partially to construct $\mathfrak{H}$.
\subsection{Some preliminaries}\label{sec4.1}
On an infinite-dimensional separable Hilbert space $H$, $T\in \mathbb{B}(H)$ is called a contraction in case $\|T\|\leq 1$. $T$ is called completely non-unitary (c.n.u. for short) if $T$ has no nontrivial invariant subspace $\mathcal{K}$ such that $T|_\mathcal{K}$ is unitary. Any contraction can be decomposed into the orthogonal direct sum of a unitary part and a c.n.u. part, and in this section we only consider c.n.u. contractions. A basic reference on contractions is \cite{nagy2010harmonic}.

Let $T$ be a c.n.u. contraction on $H$, and
\[\mathbb{K}:=\textup{ker}(Id-T^*T),\quad \mathbb{K_*}:=\textup{ker}(Id-TT^*).\]
Due to the basic identity $T(Id-T^*T)=(Id-TT^*)T$ and its conjugate counterpart, we see that
\[T\mathbb{K}\subseteq \mathbb{K}_*,\quad T^*\mathbb{K}_*\subseteq \mathbb{K}.\]
 In particular, $T|_{\mathbb{K}}$ is a unitary map from $\mathbb{K}$ onto $\mathbb{K}_\ast$. We call $T|_{\mathbb{K}}$ the \emph{unitary map part} of $T$. It is also easy to see $T$ maps $\mathbb{K}^\bot$ into $\mathbb{K}_\ast^\bot$ and $T^*$ maps $\mathbb{K}_\ast^\bot$ into $\mathbb{K}^\bot$ likewise. We shall denote $T|_{\mathbb{K}^\bot}$ by $t$ and call it the \emph{non-unitary map part} of $T$. The two numbers $n_+=\dim \mathbb{K}^\bot$ and $n_-=\dim \mathbb{K}_*^\bot$ are called the deficiency indices of $T$.\footnote{Some authors call them the defect numbers, e.g., \cite{nagy2010harmonic}.}

In both the Sz.-Nagy-Foias model and the de Branges-Rovnyak model, the so-called characteristic function $\Theta_T(\lambda)$ of $T$ plays a central role in constructing a functional model for $T$. Let $\mathfrak{D}=(Id-T^*T)^{1/2}$ and $\mathfrak{D}_\ast=(Id-TT^*)^{1/2}$. Then
\[\Theta_T(\lambda):=(-T+\lambda \mathfrak{D}_*(Id-\lambda T^*)^{-1}\mathfrak{D})|_{\mathbb{K}^\bot}.\]
$\Theta_T(\lambda)$ is actually a $\mathbb{B}(\mathbb{K}^\bot, \mathbb{K}_\ast ^\bot)$-valued analytic function on the unit disc $\mathbb{D}$. In particular, for each $\lambda\in \mathbb{D}$, $\Theta_T(\lambda)$ is a pure contraction. For the motivation of this definition, see \cite{nagy2010harmonic, douglas1974canonical}. For more details on its role in the Sz.-Nagy-Foias model and the de Branges-Rovnyak model, see \cite[Chap.~VI]{nagy2010harmonic} and \cite{ball2014branges}.

We shall take an alternative viewpoint towards the above material. Let $\mathbb{H}=H\oplus_\bot H$ be the standard strong symplectic Hilbert space in Example \ref{e3} with $H_+=H_-=H$. Then obviously, the unitary map part of $T$ defines an isotropic subspace $A_T$ of $\mathbb{H}$:
\[A_T:=\{(x,Tx)\in \mathbb{H}|x\in \mathbb{K}\}=\{(T^*x,x)\in \mathbb{H}|x\in \mathbb{K}_*\}.\]
\begin{lemma}\label{l1}In the strong symplectic Hilbert space $\mathbb{H}$,
\[A_T^{\bot_s}=\{(y_1, y_2)\in \mathbb{H}|y_1-T^*y_2\in \mathbb{K}^\bot\}=\{(y_1, y_2)\in \mathbb{H}|Ty_1-y_2\in \mathbb{K}_*^\bot\}.\]
\end{lemma}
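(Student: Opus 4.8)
The plan is to compute $A_T^{\bot_s}$ directly from the definition, using the standard strong symplectic structure on $\mathbb{H}=H\oplus_\bot H$, namely $[(x_1,x_2),(y_1,y_2)]_{st}=i(x_1,y_1)_H-i(x_2,y_2)_H$. Recall $A_T=\{(x,Tx)\mid x\in\mathbb{K}\}=\{(T^*y,y)\mid y\in\mathbb{K}_*\}$, where the second description uses that $T|_{\mathbb{K}}$ is a unitary map onto $\mathbb{K}_*$ with inverse $T^*|_{\mathbb{K}_*}$. So $(y_1,y_2)\in A_T^{\bot_s}$ iff $i(y_1,x)_H-i(y_2,Tx)_H=0$ for all $x\in\mathbb{K}$, i.e. iff $(y_1-T^*y_2,x)_H=0$ for all $x\in\mathbb{K}$, i.e. iff $y_1-T^*y_2\in\mathbb{K}^\bot$. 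This already gives the first description. For the second, I would instead pair against the alternative parametrization $(T^*w,w)$, $w\in\mathbb{K}_*$: the condition becomes $i(y_1,T^*w)_H-i(y_2,w)_H=0$ for all $w\in\mathbb{K}_*$, i.e. $(Ty_1-y_2,w)_H=0$ for all $w\in\mathbb{K}_*$, i.e. $Ty_1-y_2\in\mathbb{K}_*^\bot$.

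It then remains to check that these two sets really coincide, so that the lemma is internally consistent. The key identity is the intertwining relation $T(Id-T^*T)=(Id-TT^*)T$, hence $T\mathbb{K}\subseteq\mathbb{K}_*$ and $T^*\mathbb{K}_*\subseteq\mathbb{K}$, already recorded in the text. Suppose $y_1-T^*y_2\in\mathbb{K}^\bot$. Write $y_1=T^*y_2+k^\bot$ with $k^\bot\in\mathbb{K}^\bot$; since $T$ maps $\mathbb{K}^\bot$ into $\mathbb{K}_*^\bot$, we get $Ty_1=TT^*y_2+Tk^\bot$ with $Tk^\bot\in\mathbb{K}_*^\bot$. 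So it suffices to see $TT^*y_2-y_2\in\mathbb{K}_*^\bot$. Decompose $y_2=y_2'+y_2''$ along $\mathbb{K}_*\oplus_\bot\mathbb{K}_*^\bot$; on $\mathbb{K}_*$ the operator $TT^*$ is the identity (because $T^*|_{\mathbb{K}_*}$ followed by $T|_{\mathbb{K}}$ is the identity), so $TT^*y_2'-y_2'=0$, while $TT^*y_2''-y_2''=(TT^*-Id)y_2''$ and $(Id-TT^*)$ has range contained in $\mathbb{K}_*^\bot$ (indeed $\overline{\textup{Ran}(Id-TT^*)}=\mathbb{K}_*^\bot$ since $\mathbb{K}_*=\textup{ker}(Id-TT^*)$ and $Id-TT^*$ is self-adjoint). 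Hence $Ty_1-y_2\in\mathbb{K}_*^\bot$, giving one inclusion; the reverse inclusion is symmetric, swapping the roles of $T,\mathbb{K}$ and $T^*,\mathbb{K}_*$.

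The only mildly delicate point — and the step I'd expect to require the most care — is the bookkeeping with the non-closed ranges: one should phrase things via $\textup{ker}(Id-TT^*)^\bot=\overline{\textup{Ran}(Id-TT^*)}$ and the inclusion $\textup{Ran}(Id-TT^*)\subseteq\textup{ker}(Id-TT^*)^\bot$ (immediate from self-adjointness), rather than claiming any range is closed. Everything else is a one-line manipulation of the $st$-form together with the unitarity of $T|_{\mathbb{K}}$, so I would present the first equality as the definitional computation, the second as the analogous computation against the other parametrization of $A_T$, and then remark that consistency of the two follows from the intertwining identity as above. Alternatively, one can verify the consistency in a single stroke by noting that $(y_1,y_2)\mapsto(y_1-T^*y_2)$ and $(y_1,y_2)\mapsto(Ty_1-y_2)$ differ by applying $T$ (plus a correction supported on $\mathbb{K}_*$ that lands in $\mathbb{K}_*^\bot$), but spelling this out is exactly the computation just sketched.
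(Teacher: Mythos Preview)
Your proof is correct and is exactly the direct computation the paper has in mind; the paper's own proof is the single word ``Obvious.'' Your additional consistency check (that the two descriptions agree) is correct but superfluous: since each description has already been shown to equal $A_T^{\bot_s}$ via the two parametrizations of $A_T$, their coincidence is automatic.
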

\begin{proof}Obvious.
\end{proof}
$\mathbb{\mathbb{K}}^\bot$ and $\mathbb{K}_*^\bot$ are naturally embedded in $\mathbb{H}$ via $x\mapsto (x,0)\in \mathbb{H}$ and $x\mapsto (0, x)\in \mathbb{H}$ respectively. Denote the corresponding images by $Q$ and $Q_*$.
\begin{lemma}\label{l3}In the strong symplectic Hilbert space $\mathbb{H}$,
\[A_T^{\bot_s}=A_T\oplus_\bot Q \oplus_\bot Q_*.\]
\end{lemma}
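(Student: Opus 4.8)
The plan is to verify the claimed decomposition $A_T^{\bot_s}=A_T\oplus_\bot Q\oplus_\bot Q_*$ by first checking that the three summands are mutually orthogonal in $\mathbb{H}=H\oplus_\bot H$, then checking that each lies in $A_T^{\bot_s}$, and finally showing that together they exhaust $A_T^{\bot_s}$. The orthogonality is straightforward from the definitions: $Q=\{(x,0)\,|\,x\in\mathbb{K}^\bot\}$ and $Q_*=\{(0,x)\,|\,x\in\mathbb{K}_*^\bot\}$ are clearly orthogonal to each other in $H\oplus_\bot H$; and an element $(y,Ty)\in A_T$ with $y\in\mathbb{K}$ is orthogonal to $Q$ because $y\perp\mathbb{K}^\bot$, and orthogonal to $Q_*$ because $Ty\in\mathbb{K}_*$ is orthogonal to $\mathbb{K}_*^\bot$ (using $T\mathbb{K}\subseteq\mathbb{K}_*$).

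Next I would check each summand lies in $A_T^{\bot_s}$, using Lemma~\ref{l1}. For $Q$: if $(y_1,y_2)=(x,0)$ with $x\in\mathbb{K}^\bot$, then $y_1-T^*y_2=x\in\mathbb{K}^\bot$, so $(x,0)\in A_T^{\bot_s}$. For $Q_*$: if $(y_1,y_2)=(0,x)$ with $x\in\mathbb{K}_*^\bot$, then $Ty_1-y_2=-x\in\mathbb{K}_*^\bot$, so $(0,x)\in A_T^{\bot_s}$. That $A_T\subseteq A_T^{\bot_s}$ is just isotropy of $A_T$, already noted in the text. Hence $A_T\oplus_\bot Q\oplus_\bot Q_*\subseteq A_T^{\bot_s}$.

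The reverse inclusion is the substantive step. Given $(y_1,y_2)\in A_T^{\bot_s}$, I want to split it as $(y,Ty)+(u,0)+(0,v)$ with $y\in\mathbb{K}$, $u\in\mathbb{K}^\bot$, $v\in\mathbb{K}_*^\bot$. This forces $y$ to be the $\mathbb{K}$-component of $y_1$, i.e. $y=P_{\mathbb{K}}y_1$ where $P_{\mathbb{K}}$ is the orthogonal projection onto $\mathbb{K}$; then $u=P_{\mathbb{K}^\bot}y_1=y_1-P_{\mathbb{K}}y_1$, and we need $v:=y_2-Ty=y_2-TP_{\mathbb{K}}y_1$ to lie in $\mathbb{K}_*^\bot$. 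So the real content is the identity $y_2-TP_{\mathbb{K}}y_1\in\mathbb{K}_*^\bot$ for every $(y_1,y_2)\in A_T^{\bot_s}$. To see this, decompose $y_2=P_{\mathbb{K}_*}y_2+P_{\mathbb{K}_*^\bot}y_2$; since $P_{\mathbb{K}_*^\bot}y_2\in\mathbb{K}_*^\bot$ automatically, it suffices to show $P_{\mathbb{K}_*}y_2=TP_{\mathbb{K}}y_1$. Using the second description in Lemma~\ref{l1}, $Ty_1-y_2\in\mathbb{K}_*^\bot$; projecting onto $\mathbb{K}_*$ gives $P_{\mathbb{K}_*}(Ty_1)=P_{\mathbb{K}_*}y_2$. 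It then remains to check $P_{\mathbb{K}_*}(Ty_1)=TP_{\mathbb{K}}y_1$, equivalently that $T$ maps $\mathbb{K}^\bot$ into $\mathbb{K}_*^\bot$ (so that $P_{\mathbb{K}_*}T P_{\mathbb{K}^\bot}=0$) — and this is exactly the fact, already recorded in the text, that $T$ maps $\mathbb{K}^\bot$ into $\mathbb{K}_*^\bot$. So $P_{\mathbb{K}_*}(Ty_1)=P_{\mathbb{K}_*}(TP_{\mathbb{K}}y_1)=TP_{\mathbb{K}}y_1$, the last equality because $TP_{\mathbb{K}}y_1\in T\mathbb{K}\subseteq\mathbb{K}_*$. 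This closes the argument.

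The main obstacle, such as it is, is bookkeeping: keeping straight which of the two equivalent descriptions of $A_T^{\bot_s}$ from Lemma~\ref{l1} to use at each point, and making sure the projections $P_{\mathbb{K}}$, $P_{\mathbb{K}^\bot}$, $P_{\mathbb{K}_*}$, $P_{\mathbb{K}_*^\bot}$ are applied to the right components. No genuinely hard analysis enters; everything reduces to the invariance relations $T\mathbb{K}\subseteq\mathbb{K}_*$, $T^*\mathbb{K}_*\subseteq\mathbb{K}$, $T\mathbb{K}^\bot\subseteq\mathbb{K}_*^\bot$, $T^*\mathbb{K}_*^\bot\subseteq\mathbb{K}^\bot$ established before the lemma, together with orthogonal decomposition in $H$.
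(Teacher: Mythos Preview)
Your argument is correct. The paper's proof follows the same outline (orthogonality obvious, inclusion $A_T\oplus_\bot Q\oplus_\bot Q_*\subseteq A_T^{\bot_s}$ clear, then the reverse inclusion), but handles the reverse step slightly differently: instead of explicitly projecting and invoking the invariance $T\mathbb{K}^\bot\subseteq\mathbb{K}_*^\bot$, it takes $(y_1,y_2)\in A_T^{\bot_s}$ that is also \emph{Hilbert}-orthogonal to $A_T$ and observes that the two conditions $(y_1,x)_H-(y_2,Tx)_H=0$ (symplectic orthogonality) and $(y_1,x)_H+(y_2,Tx)_H=0$ (Hilbert orthogonality) together force $(y_1,x)_H=(y_2,Tx)_H=0$ for all $x\in\mathbb{K}$, whence $y_1\in\mathbb{K}^\bot$ and $y_2\in\mathbb{K}_*^\bot$ directly. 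Your constructive decomposition via $P_{\mathbb{K}}$ and $P_{\mathbb{K}_*}$ is a bit longer but perfectly sound, and has the minor advantage of making the three summands explicit rather than arguing by orthogonal complement; the paper's two-equation trick is shorter and avoids appealing to $T\mathbb{K}^\bot\subseteq\mathbb{K}_*^\bot$.
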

\begin{proof}That the direct sum is orthogonal is obvious. It is clear that $A_T\oplus_\bot Q \oplus_\bot Q_*\subseteq A_T^{\bot_s}$. Let $(y_1,y_2)\in A_T^{\bot_s}$ be orthogonal to $A_T$. Then for any $x\in \mathbb{K}$, $(y_1,x)_H-(y_2,Tx)_H=0$ and $(y_1,x)_H+(y_2,Tx)_H=0$. Thus $(y_1,x)_H=(y_2,Tx)_H=0$ for any $x\in \mathbb{K}$, i.e., $y_1\in \mathbb{K}^\bot$ and $y_2\in \mathbb{K}_*^\bot$. The proof is finished.
\end{proof}
For $a\in A_T^{\bot_s}$, if $(a_+,0)\in Q$ (resp. $(0,a_-)\in Q_*$) is the $Q$-part (resp. $Q_*$-part) of $a$ w.r.t. the above decomposition, we set $\Gamma_\pm a=a_\pm$.
\begin{lemma}$(\mathbb{K}^\bot, \mathbb{K}_*^\bot, \Gamma_\pm)$ is a boundary quadruple for $A_T^{\bot_s}$.
\end{lemma}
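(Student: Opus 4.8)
The plan is to verify the defining identity of a boundary quadruple, namely the abstract Green's formula \eqref{gre}, for the maps $\Gamma_\pm$ just introduced, using the orthogonal decomposition $A_T^{\bot_s}=A_T\oplus_\bot Q\oplus_\bot Q_*$ from Lemma~\ref{l3}. Recall that $\mathbb{H}$ carries the \emph{standard} strong symplectic structure with $H_+=H_-=H$, so that for $a=(a_1,a_2),b=(b_1,b_2)\in\mathbb{H}$ one has $[a,b]=i(a_1,b_1)_H-i(a_2,b_2)_H$. By definition $\mathbb{K}^\bot$ is identified with $Q$ via $x\mapsto(x,0)$ and $\mathbb{K}_*^\bot$ with $Q_*$ via $x\mapsto(0,x)$, and $\Gamma_\pm a$ picks out the corresponding component of $a$ in the decomposition of Lemma~\ref{l3}.

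First I would write, for $a,b\in A_T^{\bot_s}$, the decompositions $a=a_0+(\Gamma_+a,0)+(0,\Gamma_-a)$ and $b=b_0+(\Gamma_+b,0)+(0,\Gamma_-b)$ with $a_0,b_0\in A_T$. Since $[\cdot,\cdot]$ is sesquilinear, $[a,b]$ expands into nine terms. The key observations that collapse this sum are: (i) $A_T$ is isotropic, so $[a_0,b_0]=0$; (ii) $a_0,b_0\in A_T\subseteq A_T^{\bot_s}$, while $(\Gamma_+b,0),(0,\Gamma_-b)$ lie in $A_T^{\bot_s}$ as well — more to the point, by Lemma~\ref{l1} every element of $A_T^{\bot_s}$ is symplectically orthogonal to $A_T$, so the four cross terms pairing $a_0$ or $b_0$ against a $Q$- or $Q_*$-component vanish; (iii) the cross term $[(\Gamma_+a,0),(0,\Gamma_-b)]=i(\Gamma_+a,0)_H\cdot\!\!-\,$ wait — computing directly, $[(\Gamma_+a,0),(0,\Gamma_-b)]=i(\Gamma_+a,0)_H-i(0,\Gamma_-b)_H=0$, and likewise $[(0,\Gamma_-a),(\Gamma_+b,0)]=0$, because in each case one of the two $H$-slots is zero. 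What survives is precisely $[(\Gamma_+a,0),(\Gamma_+b,0)]+[(0,\Gamma_-a),(0,\Gamma_-b)]=i(\Gamma_+a,\Gamma_+b)_H-i(\Gamma_-a,\Gamma_-b)_H$, which is \eqref{gre} with $H_\pm=\mathbb{K}^\bot$ (resp.\ $\mathbb{K}_*^\bot$) and the inner product inherited from $H$.

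Next I would record the surjectivity requirement built into the notion of boundary quadruple: the map $A_T^{\bot_s}\to \mathbb{K}^\bot\oplus_\bot\mathbb{K}_*^\bot$, $a\mapsto(\Gamma_+a,\Gamma_-a)$, must be onto, with kernel the isotropic subspace $A_T$ (so that it descends to a symplectic isomorphism of the quotient $A_T^{\bot_s}/A_T$ with the standard strong symplectic Hilbert space on $\mathbb{K}^\bot\oplus_\bot\mathbb{K}_*^\bot$). Both are immediate from Lemma~\ref{l3}: surjectivity because $(x,y)\in A_T^{\bot_s}$ for any $x\in\mathbb{K}^\bot$, $y\in\mathbb{K}_*^\bot$ (indeed $(x,0)\in Q\subseteq A_T^{\bot_s}$ and $(0,y)\in Q_*\subseteq A_T^{\bot_s}$, using Lemma~\ref{l1}), and the kernel is exactly the $A_T$-summand. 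One should also note $\dim\mathbb{K}^\bot=n_+$ and $\dim\mathbb{K}_*^\bot=n_-$ by definition of the deficiency indices, so the signature of the quotient is the expected $(n_+,n_-)$.

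I do not anticipate a genuine obstacle here; the statement is essentially a bookkeeping consequence of Lemmas~\ref{l1} and~\ref{l3}. The one point that deserves a half-sentence of care is the vanishing of the mixed terms $[(\Gamma_+a,0),(0,\Gamma_-b)]$ and its mate — these are zero for the \emph{standard} symplectic form precisely because it does not couple the two $H$-slots, which is the feature that distinguishes this computation from its Cayley-transformed counterpart over the non-standard symplectic structure used for symmetric operators in \S\ref{sec3}. Everything else is the isotropy of $A_T$ and the orthogonality asserted in Lemma~\ref{l3} together with Lemma~\ref{l1}.
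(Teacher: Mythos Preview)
Your proposal is correct and is precisely the direct verification the paper alludes to; the paper's own proof consists of the single sentence ``This can be checked directly.'' Your expansion via the nine-term bilinear decomposition, together with the surjectivity/kernel check from Lemma~\ref{l3}, is exactly the routine computation the author had in mind, and nothing further is needed.
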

\begin{proof}This can be checked directly.
\end{proof}
We shall call this boundary quadruple the canonical one associated to $T$. Clearly, the graph $Gr_T$ of $T$ is an intermediate subspace between $A_T$ and $A_T^{\bot_s}$. $Gr_T$ can be singled out from $A_T^{\bot_s}$ by requiring $\Gamma_-a=t\Gamma_+a$ for $a\in A_T^{\bot_s}$, where $t=T|_{\mathbb{K}^\bot}$.

With the standard polarization of $\mathbb{H}$, we can again define the associated universal Weyl curve
\[W_\lambda:=\{(x,\lambda x)\in \mathbb{H}|x\in H\}.\]
However, since the strong symplectic structure is different from the one used in \S~\ref{sec3.1}, this time we find that $W_\lambda\in W_+(\mathbb{H})$ for $\lambda\in \mathbb{D}$ while $W_\lambda\in W_-(\mathbb{H})$ for $\lambda\in \mathbb{C}\backslash \overline{\mathbb{D}}$. We are interested in $N_\lambda:=W_\lambda \cap A_T^{\bot_s}$.
\begin{proposition}\label{p1}For $\lambda\in \mathbb{D}$,
\[N_\lambda=\{((Id-\lambda T^*)^{-1}f, \lambda (Id-\lambda T^*)^{-1}f)\in \mathbb{H}|f\in \mathbb{K}^\bot\},\]
and for $\lambda\in \mathbb{C}\backslash \overline{\mathbb{D}}$,
\[N_\lambda=\{((\lambda-T)^{-1}f, \lambda (\lambda- T)^{-1}f)\in \mathbb{H}|f\in \mathbb{K}_*^\bot\}.\]
\end{proposition}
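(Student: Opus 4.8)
The plan is to unwind the definition $N_\lambda=W_\lambda\cap A_T^{\bot_s}$ using Lemma~\ref{l1} and then simply invert the relevant operator. An element of $W_\lambda$ has the form $(x,\lambda x)$ with $x\in H$, so the whole question is: for which $x$ does $(x,\lambda x)$ lie in $A_T^{\bot_s}$? Lemma~\ref{l1} gives two equivalent characterizations, and the point of the proof is that for $\lambda$ inside the disc one should use the first, while for $\lambda$ outside $\overline{\mathbb{D}}$ one should use the second — in each case this is precisely the description in which the pertinent operator is boundedly invertible.

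For $\lambda\in\mathbb{D}$, I would substitute $(y_1,y_2)=(x,\lambda x)$ into the first description in Lemma~\ref{l1}: the condition becomes $x-\lambda T^*x=(Id-\lambda T^*)x\in\mathbb{K}^\bot$. Since $\|\lambda T^*\|=|\lambda|\,\|T^*\|<1$, the operator $Id-\lambda T^*$ is boundedly invertible on $H$ (Neumann series). Setting $f:=(Id-\lambda T^*)x$ we get $x=(Id-\lambda T^*)^{-1}f$, and as $x$ ranges over $(Id-\lambda T^*)^{-1}(\mathbb{K}^\bot)$ the vector $f$ ranges over all of $\mathbb{K}^\bot$; this yields exactly the asserted form of $N_\lambda$, with both inclusions coming for free from the bijectivity of the substitution.

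For $\lambda\in\mathbb{C}\setminus\overline{\mathbb{D}}$, I would instead use the second description in Lemma~\ref{l1}: $(x,\lambda x)\in A_T^{\bot_s}$ iff $Tx-\lambda x\in\mathbb{K}_*^\bot$, i.e.\ $(\lambda-T)x\in\mathbb{K}_*^\bot$. Now $\lambda-T=\lambda(Id-\lambda^{-1}T)$ with $\|\lambda^{-1}T\|=|\lambda|^{-1}\|T\|<1$, so $\lambda-T$ is boundedly invertible. Writing $f:=(\lambda-T)x$ gives $x=(\lambda-T)^{-1}f$ with $f$ ranging over $\mathbb{K}_*^\bot$, which is the second displayed formula.

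There is essentially no serious obstacle here; the only thing that requires a moment's care — and the only genuine content beyond routine manipulation — is the observation that each regime forces a specific one of the two forms of $A_T^{\bot_s}$: inside $\mathbb{D}$ the operator $\lambda-T$ need not be invertible (its spectrum may meet $\mathbb{D}$), while outside $\overline{\mathbb{D}}$ the operator $Id-\lambda T^*$ need not be invertible (the spectrum of $T^*$ may meet $\mathbb{D}$), so one must pair $\lambda\in\mathbb{D}$ with the $Id-\lambda T^*$ description and $\lambda\notin\overline{\mathbb{D}}$ with the $\lambda-T$ description. Once the right description is chosen, the invertibility is immediate from $\|T\|\le 1$ and everything else is bookkeeping.
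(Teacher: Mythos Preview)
Your proof is correct and follows exactly the same approach as the paper: use Lemma~\ref{l1} to rewrite the condition $(x,\lambda x)\in A_T^{\bot_s}$ as $(Id-\lambda T^*)x\in\mathbb{K}^\bot$ (resp.\ $(\lambda-T)x\in\mathbb{K}_*^\bot$) and then invert. The paper's proof is a single line that omits the invertibility justification and the reason for switching descriptions between the two regimes; your write-up simply makes these implicit steps explicit.
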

\begin{proof}For $\lambda\in \mathbb{D}$ and $x\in H$, by Lemma \ref{l1} $(x,\lambda x)\in A_T^{\bot_s}$ if and only if
\[x-\lambda T^* x\in \mathbb{K}^{\bot}.\]
Thus the first claim follows. The second can be checked similarly.
\end{proof}
For later convenience, we shall denote $\mathbb{D}$ by $\mathbb{D}_+$ and identify $(\mathbb{C}\backslash \overline{\mathbb{D}})\cup \{\infty\}$ with another copy of $\mathbb{D}$ (denoted by $\mathbb{D}_-$) via $\lambda \mapsto 1/\lambda$. In this way, for $\lambda\in \mathbb{D}_-$, $N_\lambda$ has the following form:
\[N_\lambda=\{(\lambda(Id-\lambda T)^{-1}f,  (Id- \lambda T)^{-1}f)\in \mathbb{H}|f\in \mathbb{K}_*^\bot\}.\]
In particular, this even makes sense for $\lambda=0$. Also to avoid ambiguities, $0\in \mathbb{D}_\pm$ will be denoted by $0_\pm$ respectively from now on.

We are now in a position to explain the relation between contractions and symmetric operators. For a simple symmetric operator $T$, the graph $A_T$ is an isotropic subspace of $(\mathbb{H}, [\cdot, \cdot]_{new})$. The Cayley transform in Example \ref{e8} turns $A_T$ into an isotropic subspace $A$ in $(\mathbb{H}, [\cdot, \cdot]_{st})$, and $A_T^{\bot_s}$ into $A^{\bot_s}$. In terms of the standard polarization of $(\mathbb{H}, [\cdot, \cdot]_{st})$, $A$ is the graph of $C:=(T-i)(T+i)^{-1}$ which is a unitary map from $\textup{Ran}(T+i)$ to $\textup{Ran}(T-i)$. The Cayley transform also turns the universal Weyl curve $W_\lambda^1$ associated to $(\mathbb{H}, [\cdot, \cdot]_{new})$ into the universal Weyl curve $W_\lambda^2$ associated to $(\mathbb{H}, [\cdot, \cdot]_{st})$. Correspondingly, $W_\lambda^1\cap A_T^{\bot_s}$ is turned into $W_\mu^2\cap A^{\bot_s}$ where $\mu=\frac{\lambda-i}{\lambda+i}$.

Conversely, given a c.n.u. contraction $C$ on $H$, its unitary map part gives rise to an isotropic subspace $A$ in $(\mathbb{H}, [\cdot, \cdot]_{st})$ while the \emph{inverse} Cayley transform turns $A$ into the graph of a simple (not necessarily densely defined) symmetric operator $T$. The statement that $C$ is c.n.u. means precisely that $T$ is simple. The non-unitary map part of $C$ then corresponds to a dissipative extension of $T$ which is parameterized by a maximal positive-definite subspace of $A^{\bot_s}/A$.

In fact, Cayley transform was a basic tool in von Neumann's original approach to symmetric operators, reducing the problem of self-adjoint extensions of $T$ to the problem of unitary extensions of $(T-i)(T+i)^{-1}$. For an up-to-date account of the use of Cayley transform, see \cite[Chap.~1]{behrndt2020boundary}.

Bearing this correspondence via Cayley transform in mind, let us come back to the theory of c.n.u. contractions.
\begin{theorem}For $\lambda\in \mathbb{D}_+\cup \mathbb{D}_-$, let $N_\lambda$ be as in Prop.~\ref{p1} and the remark after it. Denote the image of $N_\lambda$ in $\mathcal{B}_T:=A_T^{\bot_s}/A_T$ under the quotient map by $W_T(\lambda)$. This defines a holomorphic map \[W_T:\mathbb{D}_+\cup \mathbb{D}_- \rightarrow W_+(\mathcal{B}_T)\cup W_-(\mathcal{B}_T)\]
such that $W_T(\bar{\lambda})=(W_T(\lambda))^{\bot_s}$, where $\lambda\in \mathbb{D}_+$ and $\bar{\lambda}$ is its conjugate but interpreted as an element in $\mathbb{D}_-$.
\end{theorem}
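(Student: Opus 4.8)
The plan is to obtain every assertion by transporting, along the Cayley transform, the corresponding facts about simple symmetric operators collected in \S\ref{sec3.1} (established in \cite{wang2024complex}); essentially no new computation is needed.

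In detail, let $A_T\subseteq(\mathbb{H},[\cdot,\cdot]_{st})$ be the isotropic subspace determined by the unitary map part of $T$. The inverse Cayley transform $\beta^{-1}$ of Example~\ref{e8} carries $A_T$ onto the graph $A_S$ of a simple (possibly non-densely-defined) symmetric operator $S$, and the hypothesis that $T$ is c.n.u.\ is exactly what makes $S$ simple. Since $\beta$ is a symplectic isomorphism from $(\mathbb{H},[\cdot,\cdot]_{new})$ to $(\mathbb{H},[\cdot,\cdot]_{st})$, it sends $A_S^{\bot_s}$ onto $A_T^{\bot_s}$ and induces a symplectic isomorphism $\Phi:\mathcal{B}_S:=A_S^{\bot_s}/A_S\to\mathcal{B}_T=A_T^{\bot_s}/A_T$, and it carries the universal Weyl curve $W_\mu^1$ of $(\mathbb{H},[\cdot,\cdot]_{new})$ onto that of $(\mathbb{H},[\cdot,\cdot]_{st})$, the half-plane parameter $\mu$ and the disc parameter $\lambda$ being related by $\lambda=\tfrac{\mu-i}{\mu+i}$. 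Hence $\beta^{-1}$ sends $N_\lambda=W_\lambda^2\cap A_T^{\bot_s}$ onto $W_\mu^1\cap A_S^{\bot_s}$, and therefore sends the image $W_T(\lambda)$ of $N_\lambda$ in $\mathcal{B}_T$ onto the image $W_S(\mu)$ of $W_\mu^1\cap A_S^{\bot_s}$ in $\mathcal{B}_S$; that is, for $\lambda\in\mathbb{D}_+$ and the corresponding $\mu\in\mathbb{C}_+$ one has $W_T(\lambda)=\Phi(W_S(\mu))$. Using the identification of $\mathbb{D}_-$ with $(\mathbb{C}\setminus\overline{\mathbb{D}})\cup\{\infty\}$ via $\lambda\mapsto 1/\lambda$, the point-level Cayley map and its inverse assemble into a biholomorphism $\Psi:\mathbb{D}_+\sqcup\mathbb{D}_-\to\mathbb{C}_+\sqcup\mathbb{C}_-$ carrying $\mathbb{D}_\pm$ onto $\mathbb{C}_\pm$ (the exceptional point $0_-$ goes to $-i\in\mathbb{C}_-$), and with Prop.~\ref{p1} and the remark after it taken into account, $W_T(\lambda)=\Phi(W_S(\Psi(\lambda)))$ for all $\lambda\in\mathbb{D}_+\cup\mathbb{D}_-$.

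By \S\ref{sec3.1}, $W_S(\mu)$ is maximal completely positive-definite in $\mathcal{B}_S$ for $\mu\in\mathbb{C}_+$ and maximal completely negative-definite for $\mu\in\mathbb{C}_-$, the map $\mu\mapsto W_S(\mu)$ is holomorphic, and $W_S(\bar\mu)=(W_S(\mu))^{\bot_s}$ for $\mu\in\mathbb{C}_+$. A symplectic isomorphism carries maximal completely positive- (resp.\ negative-) definite subspaces to maximal completely positive- (resp.\ negative-) definite subspaces and commutes with $(\,\cdot\,)^{\bot_s}$, and it induces a biholomorphism on the corresponding Hermitian symmetric spaces, while $\Psi$ and $\Psi^{-1}$ are biholomorphic; composing, $W_T$ is a holomorphic map $\mathbb{D}_+\cup\mathbb{D}_-\to W_+(\mathcal{B}_T)\cup W_-(\mathcal{B}_T)$ sending $\mathbb{D}_\pm$ into $W_\pm(\mathcal{B}_T)$. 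For the symmetry relation, fix $\mu\in\mathbb{C}_+$ and set $\lambda=\tfrac{\mu-i}{\mu+i}\in\mathbb{D}_+$; then $\tfrac{\bar\mu-i}{\bar\mu+i}=\overline{\bigl(\tfrac{\mu-i}{\mu+i}\bigr)^{-1}}=1/\bar\lambda$ lies in the exterior of $\mathbb{D}$, i.e.\ it is $\bar\lambda$ read as an element of $\mathbb{D}_-$, so that $W_T(\bar\lambda)=\Phi(W_S(\bar\mu))=\Phi((W_S(\mu))^{\bot_s})=(\Phi(W_S(\mu)))^{\bot_s}=(W_T(\lambda))^{\bot_s}$, as claimed.

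The only genuinely delicate point is the bookkeeping above: keeping straight the two copies $\mathbb{D}_\pm$, the two identifications (exterior $\leftrightarrow\mathbb{D}_-$ and disc $\leftrightarrow$ half-plane via Cayley), and checking that complex conjugation on $\mathbb{D}_+$ matches, through $\Psi$, the reflection $\mu\mapsto\bar\mu$ of $\mathbb{C}_+$ onto $\mathbb{C}_-$. Everything else is routine: $W_T(\lambda)$ is well defined precisely because $N_\lambda\cap A_T=0$ — immediate, since $N_\lambda\subseteq W_\lambda$ is positive- or negative-definite whereas $A_T$ is isotropic — so the quotient map restricts to an injection on $N_\lambda$ and $\dim W_T(\lambda)=\dim N_\lambda=n_\pm$ accordingly, and holomorphic dependence can alternatively be read off directly from the resolvent formulas in Prop.~\ref{p1}. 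One could of course bypass the Cayley transform entirely and rerun the arguments of \S\ref{sec3} with $[\cdot,\cdot]_{st}$ in place of $[\cdot,\cdot]_{new}$, but the reduction just sketched is shorter.
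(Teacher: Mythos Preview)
Your proof is correct and follows exactly the approach the paper itself takes: reduce to the known results for simple symmetric operators by pulling everything back along the inverse Cayley transform, then invoke \cite{wang2024complex}. The paper's proof is a two-sentence sketch of precisely this reduction, whereas you have written out the bookkeeping (the biholomorphism $\Psi$, the matching of conjugations, the induced symplectic isomorphism on the quotients) in full; your elaboration is accurate and would serve well as the ``can be carried out directly'' version the paper alludes to but omits.
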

\begin{proof}The proof can be carried out directly, but we can alternatively apply the inverse Cayley transform to the unitary map part of $T$ and obtain a simple symmetric operator. Then the theory developed in \cite{wang2024complex} immediately implies the claim.
\end{proof}
Following \cite{wang2024complex}, we call $W_T(\lambda)$ or its branch over $\mathbb{D}_+$ the Weyl curve of $T$. It should be emphasized that $W_T(\lambda)$ only contains unitarily invariant information of the unitary map part of $T$ while the additional $t=T|_{\mathbb{K}^\bot}$ parameterizes a specific contractive extension of $T|_\mathbb{K}$ in terms of the canonical boundary quadruple
$(\mathbb{K}^\bot, \mathbb{K}_*^\bot, \Gamma_\pm)$. In this sense, we shall also call $W_T(\lambda)$ the Weyl curve of $A_T$.

In general, given a boundary quadruple $(H_\pm, \Gamma_\pm)$ for $A_T^{\bot_s}$, then for $\lambda\in \mathbb{D}_+$
\[(\Gamma_+,\Gamma_-)W_T(\lambda)=\{(x,B(\lambda)x)\in H_+\oplus_\bot H_-|x\in H_+\}\]
and for $\lambda\in \mathbb{D}_-$,
\[(\Gamma_+,\Gamma_-)W_T(\lambda)=\{(B(\bar{\lambda})^*x,x)\in H_+\oplus_\bot H_-|x\in H_-\}\]
where $B(\lambda)$ is an analytic $\mathbb{B}(H_+,H_-)$-valued Schur function on $\mathbb{D}_+$ such that $\|B(\lambda)\|<1$ pointwise. $B(\lambda)$ will be called the \emph{contractive Weyl function} of $T$ w.r.t. the given boundary quadruple. Another way to interpret $B(\lambda)$ is as follows: Given $x\in H_+$ and $\lambda\in \mathbb{D}_+$, there is a unique $\gamma_+(\lambda)x\in N_\lambda$ such that $\Gamma_+\gamma_+(\lambda)x=x$. Then $B(\lambda)x=\Gamma_-\gamma_+(\lambda)x$. Similarly, for $x\in H_-$ and $\lambda\in \mathbb{D}_-$, there is a unique $\gamma_-(\lambda)x\in N_\lambda$ such that $\Gamma_-\gamma_-(\lambda)x=x$. Then $B(\bar{\lambda})^*x=\Gamma_+\gamma_-(\lambda)x$. The two maps $\gamma_\pm(\lambda):\mathbb{D}_\pm\rightarrow \mathbb{B}(H_\pm, \mathbb{H})$ are holomorphic and will be called the $\gamma$-\emph{fields} associated to the boundary quadruple $(H_\pm, \Gamma_\pm)$.

\begin{example}\label{e7}Let us consider the contractive Weyl function associated to the canonical boundary quadruple $(\mathbb{K}^\bot, \mathbb{K}_*^\bot, \Gamma_\pm)$. For $\lambda\in \mathbb{D}_+$, let $a=((Id-\lambda T^*)^{-1}f, \lambda (Id-\lambda T^*)^{-1}f)\in N_\lambda$ for some $f\in \mathbb{K}^\bot$ and $(x_0, Tx_0)$ be the $A_T$-part of $a$ w.r.t. the decomposition in Lemma \ref{l3}. By definition, we have
\begin{equation}(Id-\lambda T^*)^{-1}f=x_0+a_+,\quad \lambda (Id-\lambda T^*)^{-1}f=Tx_0+a_-\label{eq1}\end{equation}
where $a_-=B(\lambda)a_+$. Denote the restriction of $\mathfrak{D}$ on $\mathbb{K}^\bot$ by $\mathfrak{D}_t$. We can assume that $\mathfrak{D}_t$ has a bounded inverse, otherwise scaling $t=T|_{\mathbb{K}^\bot}$ slightly will do the job. We should have $\mathfrak{D}(Id-\lambda T^*)^{-1}f=\mathfrak{D}_ta_+$. Then
\[a_+=\mathfrak{D}_t^{-1}\mathfrak{D}(Id-\lambda T^*)^{-1}f.\]
Solving this equation we get $f=R(\lambda)a_+$ where $R(\lambda)$ is the inverse of $\mathfrak{D}_t^{-1}\mathfrak{D}(Id-\lambda T^*)^{-1}$. In terms of these, one can find
\[B(\lambda)a_+=ta_+-(T-\lambda)(Id-\lambda T^*)^{-1}R(\lambda)a_+.\]
It is clear that for this boundary quadruple $B(0_+)=0$.
\end{example}
\begin{example}\label{e6}If $t=T|_{\mathbb{K}^\bot}\in \mathbb{B}(\mathbb{K}^\bot,\mathbb{K}_\ast^\bot)$ satisfies $\|t\|<1$, then $t$ itself provides a polarization of $\mathcal{B}_T$. We can use it to produce another boundary quadruple $(\mathbb{K}^\bot, \mathbb{K}_*^\bot, \Gamma_\pm')$. Use the pseudo-unitary transform
\[\left(
    \begin{array}{cc}
      (Id-t^*t)^{-1/2} & -t^*(Id-tt^*)^{-1/2} \\
      t(Id-t^*t)^{-1/2} & -(Id-tt^*)^{-1/2} \\
    \end{array}
  \right)\]
on $\mathbb{K}^\bot\oplus_\bot\mathbb{K}_*^\bot$ to define
\[\Gamma_+'=(Id-t^*t)^{-1/2}\Gamma_+-t^*(Id-tt^*)^{-1/2}\Gamma_-,\quad \Gamma_-'= t(Id-t^*t)^{-1/2}\Gamma_+ -(Id-tt^*)^{-1/2}\Gamma_-,\]
where $(\mathbb{K}^\bot, \mathbb{K}_*^\bot, \Gamma_\pm)$ is the canonical boundary quadruple. Note that the graph $Gr_T$ of $T$ lying in $A_T^{\bot_s}$ is singled out by the requirement $\Gamma_-'a=0$.
\end{example}
The following proposition shows how the Sz.-Nagy-Foias characteristic function makes its appearance in our formalism.
\begin{proposition}\label{p3}In Example \ref{e6}, w.r.t. the new boundary quadruple $(\mathbb{K}^\bot, \mathbb{K}_*^\bot, \Gamma_\pm')$, the corresponding contraction Weyl function is
\[B'(\lambda)=-\Theta_T(\lambda)=(T-\lambda\mathfrak{D}_*(Id-\lambda T^*)^{-1}\mathfrak{D})|_{\mathbb{K}^\bot}.\]
\end{proposition}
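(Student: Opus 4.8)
The plan is to compute $B'(\lambda)$ for $\lambda\in\mathbb{D}_+$ directly from its definition via the $\gamma$-field, rather than pushing the canonical Weyl function $B(\lambda)$ of Example~\ref{e7} through the pseudo-unitary fractional-linear map; the direct route is shorter. By Prop.~\ref{p1}, every element of $N_\lambda$ has the form $a(f)=((Id-\lambda T^*)^{-1}f,\ \lambda(Id-\lambda T^*)^{-1}f)$ with $f\in\mathbb{K}^\bot$, so the first task is to read off the canonical boundary values $(\Gamma_+a(f),\Gamma_-a(f))=(a_+,a_-)$. Writing $a(f)=(x_0,Tx_0)+(a_+,0)+(0,a_-)$ as in Lemma~\ref{l3} and applying $\mathfrak{D}$ to the first component and $\mathfrak{D}_*$ to the second annihilates the $A_T$-part, because $x_0\in\mathbb{K}=\ker\mathfrak{D}$ and $Tx_0\in\mathbb{K}_*=\ker\mathfrak{D}_*$. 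Combined with the elementary facts that $\mathfrak{D}$ restricts to $(Id-t^*t)^{1/2}$ on $\mathbb{K}^\bot$ and $\mathfrak{D}_*$ restricts to $(Id-tt^*)^{1/2}$ on $\mathbb{K}_*^\bot$ — operators that are boundedly invertible precisely because of the standing hypothesis $\|t\|<1$ of Example~\ref{e6} — this yields
\[a_+=(Id-t^*t)^{-1/2}\mathfrak{D}(Id-\lambda T^*)^{-1}f,\qquad a_-=\lambda(Id-tt^*)^{-1/2}\mathfrak{D}_*(Id-\lambda T^*)^{-1}f.\]

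Next I substitute these into the formulas for $\Gamma_\pm'$ from Example~\ref{e6} and simplify using the intertwining relations $T\mathfrak{D}=\mathfrak{D}_*T$, $\mathfrak{D}T^*=T^*\mathfrak{D}_*$ together with $t(Id-t^*t)^{-1}=(Id-tt^*)^{-1}t$ and $t^*(Id-tt^*)^{-1}=(Id-t^*t)^{-1}t^*$, all immediate from $T(Id-T^*T)=(Id-TT^*)T$. For $\Gamma_+'$ the two resulting terms telescope: they combine to $(Id-t^*t)^{-1}\mathfrak{D}(Id-\lambda T^*)(Id-\lambda T^*)^{-1}f=(Id-t^*t)^{-1}\mathfrak{D}f=(Id-t^*t)^{-1/2}f$, so the equation $\Gamma_+'a(f)=x$ is solved by $f=(Id-t^*t)^{1/2}x=\mathfrak{D}x$. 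The same manipulations applied to $\Gamma_-'$ give $\Gamma_-'a(f)=(Id-tt^*)^{-1}\mathfrak{D}_*(T-\lambda)(Id-\lambda T^*)^{-1}f$, so substituting $f=\mathfrak{D}x$ produces
\[B'(\lambda)x=(Id-tt^*)^{-1}\mathfrak{D}_*(T-\lambda)(Id-\lambda T^*)^{-1}\mathfrak{D}x,\qquad x\in\mathbb{K}^\bot.\]

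Finally I identify this with $(T-\lambda\mathfrak{D}_*(Id-\lambda T^*)^{-1}\mathfrak{D})x$, which by the definition of $\Theta_T(\lambda)$ is exactly $-\Theta_T(\lambda)x$. Both sides lie in $\mathbb{K}_*^\bot$, on which $Id-tt^*$ is boundedly invertible, so it suffices to verify the identity after multiplying through by $Id-tt^*$; on $\mathbb{K}_*^\bot$ one has $Id-tt^*=(Id-TT^*)|_{\mathbb{K}_*^\bot}=\mathfrak{D}_*^2|_{\mathbb{K}_*^\bot}$, and after using $\mathfrak{D}_*T=T\mathfrak{D}$ one can factor a common $\mathfrak{D}_*$ off both sides, reducing everything to the trivial identity $T(Id-\lambda T^*)(Id-\lambda T^*)^{-1}\mathfrak{D}x=T\mathfrak{D}x$. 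There is no deep obstacle anywhere; the step requiring the most care is the bookkeeping of which restricted operator ($t$, $t^*$, $\mathfrak{D}$, $\mathfrak{D}_*$) acts on which of $\mathbb{K}^\bot$, $\mathbb{K}_*^\bot$ — this is what legitimises $\mathfrak{D}x_0=0$, $\mathfrak{D}_*Tx_0=0$ and the restriction identities above — and one must remember to invoke $\|t\|<1$ at the single point where the bounded inverses of $(Id-t^*t)^{1/2}$ and $(Id-tt^*)^{1/2}$ are needed.
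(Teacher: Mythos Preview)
Your proof is correct. The route differs from the paper's in its organization: the paper takes the target $-\Theta_T(\lambda)$ as given and verifies directly that $-\Theta_T(\lambda)\Gamma_+'a-\Gamma_-'a=0$ for $a\in N_\lambda$, never solving for $a_\pm$ beyond the implicit relations of Eq.~(\ref{eq1}) (in particular it uses $f=a_+-t^*a_-$ and $\lambda\mathfrak{D}_*(Id-\lambda T^*)^{-1}f=(Id-tt^*)^{1/2}a_-$). You instead run the computation forward: your trick of applying $\mathfrak{D}$ and $\mathfrak{D}_*$ to the two components of $a(f)$ to annihilate the $A_T$-part gives clean closed formulas for $a_\pm$, from which $\Gamma_\pm'a$ collapse via the intertwining relations, and $B'(\lambda)$ drops out explicitly before being matched with $-\Theta_T(\lambda)$. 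Your approach has the virtue of \emph{deriving} the Weyl function rather than verifying a guessed answer, and the telescoping to $\Gamma_+'a(f)=(Id-t^*t)^{-1/2}f$ is pleasant; the paper's approach is slightly shorter at the end since it avoids the algebraic identification of your closed form with $-\Theta_T(\lambda)$. One small remark on that last step: the phrase ``factor a common $\mathfrak{D}_*$ off both sides'' reads as cancellation, which would need injectivity of $\mathfrak{D}_*$ on vectors not obviously in $\mathbb{K}_*^\bot$; what you actually do (and what makes the argument valid) is show that the two factors multiplied by $\mathfrak{D}_*$ are themselves equal, via $(T-\lambda)+\lambda\mathfrak{D}_*^2=T(Id-\lambda T^*)$.
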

\begin{proof}We still use the notation in Example \ref{e7}. For $a=((Id-\lambda T^*)^{-1}f, \lambda (Id-\lambda T^*)^{-1}f)$, by definiton
\[-\Theta_T(\lambda)\Gamma_+'a=t((Id-t^*t)^{-1/2}a_+-t^*(Id-tt^*)^{-1/2}a_-)-\lambda\mathfrak{D}_*(Id-\lambda T^*)^{-1}(a_+-t^*a_-)\]
and
\[\Gamma'_-a= t(Id-t^*t)^{-1/2}a_+ -(Id-tt^*)^{-1/2}a_-.\]
Thus
\begin{eqnarray*}-\Theta_T(\lambda)\Gamma_+'a-\Gamma'_-a&=&(Id-tt^*)(Id-tt^*)^{-1/2}a_--\lambda\mathfrak{D}_*(Id-\lambda T^*)^{-1}(a_+-t^*a_-)\\
&=&(Id-tt^*)^{1/2}a_--\lambda\mathfrak{D}_*(Id-\lambda T^*)^{-1}(a_+-t^*a_-).
\end{eqnarray*}
Due to Eq.~(\ref{eq1}), we have
\[\lambda T^*(Id-\lambda T^*)^{-1}f=T^*Tx_0+T^*a_-=x_0+t^*a_-\]
and thus $f=a_+-t^*a_-$. So we have
\[-\Theta_T(\lambda)\Gamma_+'a-\Gamma'_-a=(Id-tt^*)^{1/2}a_--\lambda\mathfrak{D}_*(Id-\lambda T^*)^{-1}f.\]
Again due to Eq.~(\ref{eq1}),
\[\lambda\mathfrak{D}_*(Id-\lambda T^*)^{-1}f=\mathfrak{D}_*a_-=(Id-tt^*)^{1/2}a_-\]
and we finally get $-\Theta_T(\lambda)\Gamma_+'a-\Gamma'_-a=0$. The claim is thus proved.
\end{proof}
\emph{Remark}. The Sz.-Nagy-Foias characteristic function $\Theta_T(\lambda)$ also makes sense for c.n.u. contractions $T$ such that $\|t\|=1$, but in this latter case it cannot appear as the contractive Weyl function simply because now $t$ won't provide a polarization for $\mathcal{B}_T$ due to Prop.~\ref{p4}.

\subsection{Characteristic vector bundles and the functional model}\label{sec4.2}

We are now prepared for our functional model for a c.n.u. contraction $T$.

Given $T$, on $\mathbb{D}_+\cup \mathbb{D}_-$ there is a natural holomorphic Hermitian vector bundle $E$. At $\lambda\in \mathbb{D}_+$, $E_\lambda=pr_1(N_\lambda)$ and at $\lambda\in \mathbb{D}_-$, $E_\lambda=pr_2(N_\lambda)$ where $pr_i$ is the orthogonal projection of $\mathbb{H}$ onto the $i$-th copy of $H$. The Hermitian structure is obtained by restricting the inner product on $H$ to $E_\lambda$. Exchanging the fiber at each $\lambda\in \mathbb{D}_+$ with the fiber at $\bar{\lambda}\in \mathbb{D}_-$, we have an anti-holomorphic vector bundle $F^\dag$. Let $F$ be the conjugate linear dual of $F^\dag$. Then $F$ is a holomorphic vector bundle with the induced Hermitian structure.

There is a natural reproducing kernel $K(\lambda, \mu)$ on $F$: For $\lambda\in \mathbb{D}_+\cup \mathbb{D}_-$, let $\iota_\lambda:F^\dag_\lambda\rightarrow H$ be the canonical inclusion. Then we have the conjugate $\iota_\lambda^\dag: H\rightarrow F_\lambda$ defined by
\[((\iota_\lambda^\dag x, \omega))=(x , \iota_\lambda \omega)_H=(x, \omega)_H\]
for each $\omega\in F^\dag_\lambda$. The kernel $K(\lambda, \mu)$ is defined to be $K(\lambda, \mu)=\iota_\lambda^\dag \iota_\mu$. The positivity of $K(\lambda, \mu)$ is clear, while that it is sesqui-analytic and locally uniformly bounded will follow easily when a holomorphic trivialization of $E$ is chosen. Let $\mathfrak{H}$ be the reproducing kernel Hilbert space generated by $K(\lambda, \mu)$, consisting of certain holomorphic sections of $F$.

Elements in $\mathfrak{H}$ can be constructed more explicitly. For any $x\in H$,  we can construct a holomorphic section $\hat{x}$ of $F$ as follows. If $\lambda\in \mathbb{D}_+\cup \mathbb{D}_-$, then $\hat{x}(\lambda)$ is defined by
\[((\hat{x}(\lambda), \omega))=(x,\omega)_H\]
for all $\omega\in F^\dag_\lambda$. Obviously $\hat{x}(\lambda)=\iota_\lambda^\dag x$. The map $x\mapsto \hat{x}$ is injective since $T$ is c.n.u.. The Hilbert space structure on $H$ can be transported onto the image $\widehat{H}$ and actually $\widehat{H}=\mathfrak{H}$. This can be justified by the uniqueness of the reproducing kernel.

This $\mathfrak{H}$ will be our model space for $T$. The advantage of our construction is that it is complex geometric in nature and doesn't depend heavily on the Euclidean properties of the unit disc $\mathbb{D}$. By contrast, in both the Sz.Nagy-Foias and the de Branges-Rovnyak models, the viewpoint of harmonic analysis is basic and thus preliminary knowledge of vector-valued Hardy spaces is necessary to construct the model space. We don't need it.

\begin{lemma}\label{l2}For $x\in \mathbb{K}$, $\widehat{Tx}(\lambda)=\lambda \hat{x}(\lambda)$ for each $\lambda\in \mathbb{D}_+$ and $\widehat{Tx}(\lambda)= \hat{x}(\lambda)/\lambda$ for each $\lambda\in \mathbb{D}_-$. In particular, $\hat{x}(0_-)=0$ in the latter case.
\end{lemma}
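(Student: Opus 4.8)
The plan is to verify both identities fiberwise, using only the explicit description of the fibers of $F^\dag$ and the defining (reproducing) property of the map $y\mapsto\hat y$. Fix $x\in\mathbb{K}$. For any $y\in H$ the section $\hat y$ is characterized by $((\hat y(\lambda),\omega))=(y,\omega)_H$ for all $\omega\in F^\dag_\lambda$, and $\widehat{Tx}(\lambda)$ and $\lambda\hat x(\lambda)$ (on $\mathbb{D}_-$ also $\hat x(\lambda)$ and $\lambda\widehat{Tx}(\lambda)$) all lie in the same fiber $F_\lambda=(F^\dag_\lambda)^\dag$. Since $((\cdot,\cdot))$ is linear in its first slot, it therefore suffices to pair the difference $\widehat{Tx}(\lambda)-\lambda\hat x(\lambda)$ (resp. $\hat x(\lambda)-\lambda\widehat{Tx}(\lambda)$) with an arbitrary $\omega\in F^\dag_\lambda$ and check that the resulting complex number vanishes.

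Next I would make $F^\dag_\lambda$ explicit. By the fiber-swap defining $F^\dag$ one has $F^\dag_\lambda=E_{\bar\lambda}$, so Prop.~\ref{p1} and the remark after it give $F^\dag_\lambda=pr_2(N_{\bar\lambda})=(Id-\bar\lambda T)^{-1}\mathbb{K}_*^\bot$ for $\lambda\in\mathbb{D}_+$, and $F^\dag_\lambda=pr_1(N_{\bar\lambda})=(Id-\bar\lambda T^*)^{-1}\mathbb{K}^\bot$ for $\lambda\in\mathbb{D}_-$; all the resolvents here are bounded because $|\lambda|<1$ and $\|T\|\le 1$. I will also use the adjoint relations $(Id-\bar\lambda T)^*=Id-\lambda T^*$ and $(Id-\bar\lambda T^*)^*=Id-\lambda T$. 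With these, for $\lambda\in\mathbb{D}_+$ and $\omega=(Id-\bar\lambda T)^{-1}g$ with $g\in\mathbb{K}_*^\bot$,
\[((\widehat{Tx}(\lambda)-\lambda\hat x(\lambda),\omega))=(Tx-\lambda x,\omega)_H=\big((Id-\lambda T^*)^{-1}(Tx-\lambda x),g\big)_H,\]
and for $\lambda\in\mathbb{D}_-$ and $\omega=(Id-\bar\lambda T^*)^{-1}f$ with $f\in\mathbb{K}^\bot$,
\[((\hat x(\lambda)-\lambda\widehat{Tx}(\lambda),\omega))=(x-\lambda Tx,\omega)_H=\big((Id-\lambda T)^{-1}(x-\lambda Tx),f\big)_H.\]

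Each of these closes by a one-line algebraic identity followed by an orthogonality. On $\mathbb{D}_+$: since $x\in\mathbb{K}=\textup{ker}(Id-T^*T)$ we have $T^*Tx=x$, hence $(Id-\lambda T^*)(Tx)=Tx-\lambda T^*Tx=Tx-\lambda x$, so $(Id-\lambda T^*)^{-1}(Tx-\lambda x)=Tx$; and $Tx\in T\mathbb{K}\subseteq\mathbb{K}_*$ is orthogonal to $g\in\mathbb{K}_*^\bot$, so the inner product vanishes and $\widehat{Tx}(\lambda)=\lambda\hat x(\lambda)$. On $\mathbb{D}_-$: $(Id-\lambda T)x=x-\lambda Tx$ gives $(Id-\lambda T)^{-1}(x-\lambda Tx)=x$, and $x\in\mathbb{K}$ is orthogonal to $f\in\mathbb{K}^\bot$, so $\hat x(\lambda)=\lambda\widehat{Tx}(\lambda)$ for every $\lambda\in\mathbb{D}_-$; dividing by $\lambda$ when $\lambda\neq0$ yields $\widehat{Tx}(\lambda)=\hat x(\lambda)/\lambda$, while specializing at $\lambda=0_-$ gives $\hat x(0_-)=0$.

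I do not expect any genuine obstacle here; the computation is short once the fibers of $F^\dag$ are written out. The only step that demands care is the \emph{bookkeeping}: correctly unwinding the fiber-exchange so that the right resolvent ($(Id-\bar\lambda T)^{-1}$ on $\mathbb{D}_+$ versus $(Id-\bar\lambda T^*)^{-1}$ on $\mathbb{D}_-$) appears, tracking the conjugation when passing to adjoints, and noticing that the $\mathbb{D}_+$ computation collapses via the defect relation $T^*Tx=x$ whereas on $\mathbb{D}_-$ one only needs $x\perp\mathbb{K}^\bot$.
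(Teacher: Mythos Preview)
Your proof is correct and follows the same fiberwise strategy as the paper: pair $\widehat{Tx}(\lambda)-\lambda\hat x(\lambda)$ (resp.\ $\hat x(\lambda)-\lambda\widehat{Tx}(\lambda)$) against an arbitrary $\omega\in F^\dag_\lambda$ and show the result vanishes. The only organizational difference is in how that vanishing is established. You unwind $F^\dag_\lambda$ into explicit resolvent ranges, move the resolvent across the inner product, and then close with the defect relation $T^*Tx=x$ and the orthogonality $Tx\in\mathbb{K}_*\perp\mathbb{K}_*^\bot$ (resp.\ $x\in\mathbb{K}\perp\mathbb{K}^\bot$). The paper instead observes that for $\omega\in F^\dag_\lambda$ the pair $(\bar\lambda\omega,\omega)$ (resp.\ $(\omega,\bar\lambda\omega)$) lies in $N_{\bar\lambda}\subseteq A_T^{\bot_s}$, while $(x,Tx)\in A_T$; the symplectic orthogonality $[A_T,A_T^{\bot_s}]_{st}=0$ then gives $(x,\bar\lambda\omega)_H-(Tx,\omega)_H=0$ in one stroke, without writing out resolvents. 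Your route is a transparent unpacking of exactly this symplectic identity; the paper's is shorter but relies on keeping the $A_T$--$A_T^{\bot_s}$ picture in view.
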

\begin{proof}If $\omega\in F_\lambda^\dag$ where $\lambda\in \mathbb{D}_+$, then by definition $(\bar{\lambda}\omega, \omega)\in N_{\bar{\lambda}}$ where $\bar{\lambda}\in \mathbb{D}_-$ and
\[(x,\bar{\lambda}\omega)_H-(Tx,\omega)_H=0.\]
Thus
\[((\widehat{Tx}(\lambda), \omega))=((\iota_\lambda^\dag (Tx), \omega))=(Tx,\omega)_H=\lambda(x,\omega)_H=((\lambda \iota_\lambda^\dag x, \omega))=((\lambda \hat{x}(\lambda), \omega)),\]
and hence $\widehat{Tx}(\lambda)=\lambda \hat{x}(\lambda)$. If $\omega\in F_\lambda^\dag$ for $\lambda\in \mathbb{D}_-$, then by definition $(\omega, \bar{\lambda}\omega)\in N_{\bar{\lambda}}$ where $\bar{\lambda}\in \mathbb{D}_+$ and
\[(x,\omega)_H-(Tx,\bar{\lambda}\omega)_H=0.\]
We can deduce that $\hat{x}(\lambda)=\lambda\widehat{Tx}(\lambda)$ and in particular, $\hat{x}(0_-)=0$.
\end{proof}
For a subspace $S\subseteq \mathbb{H}$, we use $\hat{S}$ to denote its "Fourier transform" in $\mathfrak{H}\oplus_\bot\mathfrak{H}$, i.e.,
\[\hat{S}=\{(\hat{x},\hat{y})\in \mathfrak{H}\oplus_\bot\mathfrak{H}|(x,y)\in S\}.\]
\begin{proposition}\label{p5}$(f,g)\in \widehat{A_T}\subseteq \mathfrak{H}\oplus_\bot\mathfrak{H}$ if and only if
\[g(\lambda)=\lambda f(\lambda)\]
for each $\lambda\in \mathbb{D}_+$
and
\[\lambda g(\lambda)=f(\lambda)\]
for each $\lambda\in \mathbb{D}_-$.
\end{proposition}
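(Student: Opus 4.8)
The plan is to exploit that the ``Fourier transform'' $x\mapsto\hat x$ is a unitary isomorphism of $H$ onto $\mathfrak H$, as recorded before Lemma~\ref{l2}; applied in each coordinate it gives a unitary $\mathbb H\to\mathfrak H\oplus_\bot\mathfrak H$ carrying $A_T$ onto $\widehat{A_T}$. Hence every $(f,g)\in\mathfrak H\oplus_\bot\mathfrak H$ is of the form $(\hat x,\hat y)$ for unique $x,y\in H$, and $(f,g)\in\widehat{A_T}$ if and only if $x\in\mathbb K$ and $y=Tx$. The forward implication is then immediate from Lemma~\ref{l2}: if $x\in\mathbb K$ and $y=Tx$, then $g=\hat y=\widehat{Tx}$, so $g(\lambda)=\widehat{Tx}(\lambda)=\lambda\hat x(\lambda)=\lambda f(\lambda)$ on $\mathbb D_+$, and $\lambda g(\lambda)=\lambda\widehat{Tx}(\lambda)=\hat x(\lambda)=f(\lambda)$ on $\mathbb D_-$.

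For the converse the first step is to convert the two pointwise identities into resolvent conditions on the vectors $x,y$ themselves. Recall that $\hat x(\lambda)$ is characterised by $((\hat x(\lambda),\omega))=(x,\omega)_H$ for $\omega\in F^\dag_\lambda$, and that $F^\dag_\lambda=E_{\bar\lambda}$, which by Prop.~\ref{p1} and the remark after it equals $(Id-\bar\lambda T)^{-1}\mathbb K_*^\bot$ for $\lambda\in\mathbb D_+$ and $(Id-\bar\lambda T^*)^{-1}\mathbb K^\bot$ for $\lambda\in\mathbb D_-$. A short computation (moving the resolvent to the other slot of the inner product, using $((Id-\bar\lambda T)^{-1})^*=(Id-\lambda T^*)^{-1}$, etc.) then shows that the hypothesis $g(\lambda)=\lambda f(\lambda)$ on $\mathbb D_+$ is equivalent to
\[(Id-\lambda T^*)^{-1}(y-\lambda x)\in\mathbb K_*,\qquad\lambda\in\mathbb D_+,\]
and the hypothesis $\lambda g(\lambda)=f(\lambda)$ on $\mathbb D_-$ is equivalent to
\[(Id-\lambda T)^{-1}(x-\lambda y)\in\mathbb K,\qquad\lambda\in\mathbb D_-.\]
Expanding these $H$-valued holomorphic functions as Neumann series about $\lambda=0$ and using that $\mathbb K,\mathbb K_*$ are closed (so that every Taylor coefficient lies there), one reads off that $x\in\mathbb K$, $y\in\mathbb K_*$, and, writing $z:=Tx-y$ and $w:=T^*y-x$, that $T^mz\in\mathbb K$ and $(T^*)^mw\in\mathbb K_*$ for all $m\ge0$. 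Since $x\in\mathbb K$ one has $T^*Tx=x$, whence $w=-T^*z$; since $x\in\mathbb K$ also forces $Tx\in\mathbb K_*$, one has $z\in\mathbb K_*$; combining the two, $(T^*)^kz\in\mathbb K_*$ for all $k\ge0$.

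It remains to conclude $z=0$, and this is the only nontrivial point. The set $H_u:=\{h\in H: T^nh\in\mathbb K\text{ and }(T^*)^nh\in\mathbb K_*\text{ for all }n\ge0\}$ is an intersection of preimages of the closed subspaces $\mathbb K,\mathbb K_*$, hence a closed subspace; a direct check using $T\mathbb K\subseteq\mathbb K_*$, $T^*\mathbb K_*\subseteq\mathbb K$, $T^*T|_{\mathbb K}=Id$ and $TT^*|_{\mathbb K_*}=Id$ shows that $H_u$ is invariant under both $T$ and $T^*$ and that $T|_{H_u}$ is a surjective isometry, i.e.\ unitary; since $T$ is c.n.u., $H_u=0$. (Equivalently, one may invoke the canonical decomposition of a contraction, \cite[Chap.~I]{nagy2010harmonic}, which identifies $H_u$ with the unitary part of $T$.) By the previous paragraph $z\in H_u$, so $z=0$, i.e.\ $y=Tx$, and together with $x\in\mathbb K$ this gives $(x,y)\in A_T$, hence $(f,g)\in\widehat{A_T}$. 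The main obstacle is precisely this last identification of the residual vector $z=Tx-y$ with an element of the (trivial) unitary part of $T$; everything else is routine manipulation of Neumann series and of the defining relations of $\mathfrak H$.
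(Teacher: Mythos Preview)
Your argument is correct, but it is considerably more elaborate than what the paper does. The paper's sufficiency proof exploits a single evaluation: from $f(\lambda)=\lambda g(\lambda)$ on $\mathbb D_-$ one gets $\hat x(0_-)=0$, and since $F^\dag_{0_-}=\mathbb K^\bot$ this immediately yields $x\in\mathbb K$. Lemma~\ref{l2} then applies to $x$, and comparing with the two hypotheses gives $\widehat{Tx}=\hat y$ on all of $\mathbb D_+\cup\mathbb D_-$; injectivity of the Fourier transform (already recorded as a consequence of $T$ being c.n.u.) finishes the argument in one line. Your route instead expands both resolvent identities as Neumann series, extracts infinitely many membership conditions $T^mz\in\mathbb K$, $(T^*)^mz\in\mathbb K_*$, and then constructs the maximal unitary-part subspace $H_u$ to force $z=0$ via the c.n.u.\ hypothesis directly. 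This is a genuine alternative: it bypasses the special role of the point $0_-$ and makes the dependence on the c.n.u.\ assumption explicit (rather than hiding it inside the injectivity of $x\mapsto\hat x$), at the cost of a longer and more computational argument. The paper's proof is preferable for brevity; yours is self-contained and shows more clearly which structural fact about $T$ is really being used.
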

\begin{proof}The necessity part is just Lemma \ref{l2}. To prove the sufficiency part, let $(\hat{x},\hat{y})\in \mathfrak{H}\oplus_\bot\mathfrak{H}$ satisfy the listed formulae. Then for $0_-\in \mathbb{D}_-$, $\hat{x}(0_-)=0$. Note that in this case $F^\dag_{0_-}=\mathbb{K}^\bot$ and for any $\omega\in \mathbb{K}^\bot$
\[0=((\hat{x}(0_-), \omega))=((\iota_{0_-}^\dag x, \omega))=(x,\omega)_H.\]
Thus $x\in (\mathbb{K}^\bot)^{\bot}=\mathbb{K}$. Then Lemma \ref{l2} implies for $\lambda\in \mathbb{D}_+$
\[\widehat{Tx}(\lambda)=\lambda \hat{x}(\lambda)=\hat{y}(\lambda)\]
and for $\lambda\in \mathbb{D}_-$
\[\lambda\widehat{Tx}(\lambda)=\hat{x}(\lambda)=\lambda \hat{y}(\lambda).\]
These show that $\widehat{Tx-y}=0$ and consequently $y=Tx$.
\end{proof}
\begin{corollary}\label{c2}Let $\mathcal{N}:=\textup{span}\{\cup_{\lambda\in \mathbb{D}_+\cup \mathbb{D}_-}N_\lambda\}$. Then $\mathcal{N}$ is dense in $A_T^{\bot_s}$.
\end{corollary}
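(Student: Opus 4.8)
The plan is to prove the equivalent statement that the orthogonal complement of $\mathcal{N}$ inside the Hilbert space $A_T^{\bot_s}$ is trivial. So suppose $(y_1,y_2)\in A_T^{\bot_s}$ satisfies $(y_1,y_2)\perp N_\lambda$ for every $\lambda\in\mathbb{D}_+\cup\mathbb{D}_-$, and let me deduce $(y_1,y_2)=0$. First I would use the two endpoints $0_\pm$: since $N_{0_+}=Q$ and $N_{0_-}=Q_*$, orthogonality to these two subspaces forces $y_1\in(\mathbb{K}^\bot)^\bot=\mathbb{K}$ and $y_2\in(\mathbb{K}_*^\bot)^\bot=\mathbb{K}_*$. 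Feeding this into the description of $A_T^{\bot_s}$ from Lemma~\ref{l1}, and using $T\mathbb{K}\subseteq\mathbb{K}_*$, $T^*\mathbb{K}_*\subseteq\mathbb{K}$, one gets $y_1-T^*y_2\in\mathbb{K}\cap\mathbb{K}^\bot=0$ and $Ty_1-y_2\in\mathbb{K}_*\cap\mathbb{K}_*^\bot=0$. Hence, writing $x:=y_1\in\mathbb{K}$, the problem is reduced to showing that an element $(y_1,y_2)=(x,Tx)\in A_T$ with $x\in\mathbb{K}$ that is orthogonal to all $N_\lambda$ must vanish.

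Next I would exploit the remaining orthogonality relations. Using the explicit formulas in Prop.~\ref{p1}, and moving the resolvents to the other factor of the inner product (via $(Id-\lambda T^*)^{-1\ast}=(Id-\bar\lambda T)^{-1}$), orthogonality of $(x,Tx)$ to $N_\lambda$ for $\lambda\in\mathbb{D}_+$ becomes $(Id-\bar\lambda T)^{-1}(x+\bar\lambda Tx)\in\mathbb{K}$, while the relations coming from $\lambda\in\mathbb{D}_-$ become $(Id-\bar\lambda T^*)^{-1}(\bar\lambda x+Tx)\in\mathbb{K}_*$. Writing $\mu$ for the unit-disc parameter, expanding the resolvents as geometric series, and using $T^*Tx=x$, these collapse respectively to $x+2\sum_{n\ge1}\mu^n T^n x\in\mathbb{K}$ and $Tx+2\sum_{n\ge1}\mu^n T^{*(n-1)}x\in\mathbb{K}_*$, for all $\mu$ with $|\mu|<1$. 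Since $\mathbb{K}$ and $\mathbb{K}_*$ are closed, comparing Taylor coefficients yields $T^n x\in\mathbb{K}$ and $T^{*n}x\in\mathbb{K}_*$ for every $n\ge0$; together with $T\mathbb{K}\subseteq\mathbb{K}_*$ and $T^*\mathbb{K}_*\subseteq\mathbb{K}$ this strengthens to $T^n x,\ T^{*n}x\in\mathbb{K}\cap\mathbb{K}_*$ for all $n\ge0$.

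The conclusion is then soft. Let $\mathcal{R}$ be the closed linear span of $\{T^n x:n\ge0\}\cup\{T^{*n}x:n\ge0\}$; by the previous step $\mathcal{R}\subseteq\mathbb{K}\cap\mathbb{K}_*$. Using $T^*T=Id$ on $\mathbb{K}$ and $TT^*=Id$ on $\mathbb{K}_*$, one checks that both $T$ and $T^*$ send each generator of $\mathcal{R}$ back into $\mathcal{R}$ (for instance $T^*(T^n x)=T^{n-1}x$ and $T(T^{*n}x)=T^{*(n-1)}x$ when $n\ge1$), so $\mathcal{R}$ reduces $T$; and for $z\in\mathcal{R}$ one has $T^*Tz=z=TT^*z$, so $T|_{\mathcal{R}}$ is unitary. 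Since $T$ is completely non-unitary, $\mathcal{R}=0$, whence $x\in\mathcal{R}=0$ and $(y_1,y_2)=0$, as desired.

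The main obstacle is the bookkeeping in the middle step: one must keep straight the two coordinate conventions on $\mathbb{D}_+$ and $\mathbb{D}_-$ (the latter via $\lambda\mapsto1/\lambda$), carry out the resolvent expansions carefully, and justify that a power series whose sum lies in a fixed closed subspace for all $\mu$ in a disc has all its coefficients in that subspace; everything else is formal. A shortcut that sidesteps these computations is also available: apply the inverse Cayley transform to the unitary map part of $T$ to obtain a simple symmetric operator $\mathfrak{T}$, under which $A_T^{\bot_s}$ is carried to the graph of $\mathfrak{T}^*$ and each $N_\lambda$ to the corresponding eigen-graph of $\mathfrak{T}^*$; the statement then follows from the density of the span of these eigen-graphs for simple symmetric operators, which underlies the model construction recalled in \S~\ref{sec3.2} and is established in \cite{wang2024complex}.
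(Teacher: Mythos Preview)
Your argument is correct, but it follows a genuinely different route from the paper's. The paper works with the \emph{symplectic} orthogonal complement: since $(\mathcal{N}^{\bot_s})^{\bot_s}=\overline{\mathcal{N}}$, it suffices to show $\mathcal{N}^{\bot_s}=A_T$. For $(x,y)\in\mathcal{N}^{\bot_s}$ the symplectic pairing with elements of each $N_{\bar\lambda}$ translates, via the Fourier transform $x\mapsto\hat x$, into the functional equations $\lambda\hat x(\lambda)=\hat y(\lambda)$ on $\mathbb{D}_+$ and $\hat x(\lambda)=\lambda\hat y(\lambda)$ on $\mathbb{D}_-$; Prop.~\ref{p5} then identifies $(x,y)$ as lying in $A_T$. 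Thus the paper leverages the model-space machinery (Lemma~\ref{l2} and Prop.~\ref{p5}) it has just set up, and the proof is a two-line consequence of that characterisation. By contrast you work with the \emph{Hilbert} orthogonal complement inside $A_T^{\bot_s}$, never invoke the Fourier transform, and instead expand the resolvents, read off coefficient conditions, and build a reducing subspace on which $T$ is unitary, appealing directly to the c.n.u.\ hypothesis. Your approach is more elementary and self-contained (it needs only Prop.~\ref{p1} and the definition of c.n.u.), whereas the paper's is shorter but depends on the preceding functional-model infrastructure. The Cayley-transform shortcut you mention at the end is essentially the argument the paper alludes to elsewhere when reducing statements about contractions to known facts about simple symmetric operators.
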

\begin{proof}Since $(\mathcal{N}^{\bot_s})^{\bot_s}=\overline{\mathcal{N}}$, we only have to prove that $\mathcal{N}^{\bot_s}=A_T$. Clearly, $A_T\subseteq \mathcal{N}^{\bot_s}$. Now let $(x,y)\in \mathcal{N}^{\bot_s}$. Then for $\lambda\in \mathbb{D}_+$ and each $\omega\in F^\dag_\lambda$, $(\bar{\lambda}\omega, \omega)\in N_{\bar{\lambda}}$ where $\bar{\lambda}\in \mathbb{D}_-$, and
\[(x,\bar{\lambda}\omega)_H-(y,\omega)_H=0,\]
which is equivalent to
\[((\lambda\hat{x}(\lambda)-\hat{y}(\lambda),\omega))=0.\]
Thus for each $\lambda\in \mathbb{D}_+$, $\lambda\hat{x}(\lambda)-\hat{y}(\lambda)=0$. Similarly, for each $\lambda\in \mathbb{D}_-$, $\hat{x}(\lambda)-\lambda\hat{y}(\lambda)=0$. The previous proposition leads to the claim.
\end{proof}
If a boundary quadruple $(H_\pm, \Gamma_\pm)$ for $A_T^{\bot_s}$ is chosen and $B(\lambda): \mathbb{D}_+\rightarrow \mathbb{B}(H_+, H_-)$ is the corresponding contractive Weyl function, we can obtain a localized form of $K(\lambda, \mu)$ and a concrete model for $A_T^{\bot_s}$ in terms of holomorphic vector-valued functions. Note that the associated $\gamma$-fields $\gamma_\pm$ provide a trivialization for $E$, i.e., $E|_{\mathbb{D}_+}$ can be identified with $\mathbb{D}_+ \times H_+$ via $\varphi_+:=pr_1\circ\gamma_+$ and $E|_{\mathbb{D}_-}$ with $\mathbb{D}_- \times H_-$ via $\varphi_-:=pr_2\circ\gamma_-$.
\begin{proposition}\label{p2}If $\lambda, \mu\in \mathbb{D}_+$, then for $x,y\in H_+$
\[(\varphi_+(\lambda)x, \varphi_+(\mu)y)_H=\frac{1}{1-\lambda\bar{\mu}}((Id-B(\mu)^*B(\lambda))x,y)_{H_+};\]
If $\lambda, \mu\in \mathbb{D}_-$, then for $x,y\in H_-$
\[(\varphi_-(\lambda)x, \varphi_-(\mu)y)_H=\frac{1}{1-\lambda\bar{\mu}}((Id-B(\bar{\mu})B(\bar{\lambda})^*)x,y)_{H_-};\]
If $\lambda\in \mathbb{D}_+$, $\mu\in \mathbb{D}_-$, $x\in H_+$ and $y\in H_-$, then
\[(\varphi_+(\lambda)x, \varphi_-(\mu)y)_H=\frac{1}{\lambda-\bar{\mu}}((B(\lambda)-B(\bar{\mu}))x,y)_{H_-}.\]
\end{proposition}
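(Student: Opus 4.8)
The plan is to evaluate the standard symplectic form $[\cdot,\cdot]_{st}$ on suitable pairs of vectors produced by the $\gamma$-fields in two different ways and then compare. The key preliminary observation is that $\gamma_+(\lambda)x\in N_\lambda\subseteq W_\lambda$ for $\lambda\in\mathbb{D}_+$, so since every element of $W_\lambda$ is of the form $(v,\lambda v)$ we may write $\gamma_+(\lambda)x=(\varphi_+(\lambda)x,\lambda\varphi_+(\lambda)x)$ with $\varphi_+=pr_1\circ\gamma_+$; likewise, using the form of $N_\lambda$ for $\lambda\in\mathbb{D}_-$ recorded after Prop.~\ref{p1}, $\gamma_-(\lambda)x=(\lambda\varphi_-(\lambda)x,\varphi_-(\lambda)x)$ with $\varphi_-=pr_2\circ\gamma_-$. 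Moreover, by the very definition of the $\gamma$-fields and of the contractive Weyl function, $\Gamma_+\gamma_+(\lambda)x=x$ and $\Gamma_-\gamma_+(\lambda)x=B(\lambda)x$ for $\lambda\in\mathbb{D}_+$, while $\Gamma_-\gamma_-(\lambda)x=x$ and $\Gamma_+\gamma_-(\lambda)x=B(\bar{\lambda})^*x$ for $\lambda\in\mathbb{D}_-$.

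First I would substitute these explicit descriptions into the defining formula $[(x_1,x_2),(y_1,y_2)]_{st}=i(x_1,y_1)_H-i(x_2,y_2)_H$ of the standard symplectic structure on $\mathbb{H}=H\oplus_\bot H$. Pulling a scalar out of one slot of the inner product (which, by conjugate-linearity in the second variable, introduces the relevant conjugate) gives $[\gamma_+(\lambda)x,\gamma_+(\mu)y]_{st}=i(1-\lambda\bar{\mu})(\varphi_+(\lambda)x,\varphi_+(\mu)y)_H$ for $\lambda,\mu\in\mathbb{D}_+$; $[\gamma_-(\lambda)x,\gamma_-(\mu)y]_{st}=-i(1-\lambda\bar{\mu})(\varphi_-(\lambda)x,\varphi_-(\mu)y)_H$ for $\lambda,\mu\in\mathbb{D}_-$; and $[\gamma_+(\lambda)x,\gamma_-(\mu)y]_{st}=-i(\lambda-\bar{\mu})(\varphi_+(\lambda)x,\varphi_-(\mu)y)_H$ for $\lambda\in\mathbb{D}_+$ and $\mu\in\mathbb{D}_-$.

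Second, since all the vectors $\gamma_\pm(\lambda)x$ lie in $A_T^{\bot_s}$, I would instead apply the abstract Green's formula (\ref{gre}), namely $[a,b]=i(\Gamma_+a,\Gamma_+b)_{H_+}-i(\Gamma_-a,\Gamma_-b)_{H_-}$, and plug in the values of $\Gamma_\pm$ on the $\gamma$-fields recorded above. Moving the adjoints of $B$ across the inner products, this produces $i((Id-B(\mu)^*B(\lambda))x,y)_{H_+}$, $-i((Id-B(\bar{\mu})B(\bar{\lambda})^*)x,y)_{H_-}$, and $-i((B(\lambda)-B(\bar{\mu}))x,y)_{H_-}$ in the three respective cases. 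Equating the two evaluations and cancelling the common factor $i$ (and, in the third case, the nonzero scalar $\lambda-\bar{\mu}$) then yields exactly the three asserted identities.

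The main obstacle, such as it is, will be purely clerical: keeping straight which of the two copies of $H$ each component of $\gamma_\pm(\lambda)x$ occupies, tracking the conjugates created by conjugate-linearity in the second slot, and remembering that in the mixed case $\bar{\lambda}$ and $\bar{\mu}$ denote the coordinates of $\lambda\in\mathbb{D}_+$ and $\mu\in\mathbb{D}_-$ transported between the two discs by conjugation. In that mixed case the value on the locus $\lambda=\bar{\mu}$ is to be understood via the limit, which exists because $B(\lambda)-B(\bar{\mu})$ vanishes there, consistently with the limiting convention already used for the kernel in \S~\ref{sec3.2}.
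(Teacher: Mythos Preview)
Your proof is correct and follows exactly the paper's approach: compute $[\gamma_\pm(\lambda)x,\gamma_\pm(\mu)y]_{st}$ once directly from the explicit form $\gamma_+(\lambda)x=(\varphi_+(\lambda)x,\lambda\varphi_+(\lambda)x)$, $\gamma_-(\lambda)x=(\lambda\varphi_-(\lambda)x,\varphi_-(\lambda)x)$, and once via the abstract Green's formula~(\ref{gre}) together with the boundary data $\Gamma_\pm\gamma_\pm(\lambda)x$, then equate. The paper carries this out only for the first identity and declares the other two analogous; your write-up simply spells out all three cases, and your closing remark about the removable singularity at $\lambda=\bar\mu$ is a harmless addendum.
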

\begin{proof}We only prove the first statement for the others follow similarly. Note that
\[\gamma_+(\lambda)x=(\varphi_+(\lambda)x, \lambda \varphi_+(\lambda)x )\in N_\lambda.\]
Then the abstract Green formula Eq.~(\ref{gre}) applied to $\gamma_+(\lambda)x, \gamma_+(\mu)y$ results in
\begin{eqnarray*}(1-\lambda\bar{\mu})(\varphi_+(\lambda)x, \varphi_+(\mu)y)_H&=&(\Gamma_+\gamma_+(\lambda)x, \Gamma_+\gamma_+(\mu)y)_{H_+}-(\Gamma_-\gamma_+(\lambda)x, \Gamma_-\gamma_+(\mu)y)_{H_-}\\
&=&(x,y)_{H_+}-(B(\lambda)x,B(\mu)y)_{H_-}.\end{eqnarray*}
The claim then follows.
\end{proof}
\emph{Remark}. In particular, these formulae give the Hermitian structure on $E$ in terms of the contractive Weyl function $B(\lambda)$, e.g., at $\lambda\in \mathbb{D}_+$,
\[(\varphi_+(\lambda)x, \varphi_+(\lambda)y)_H=\frac{1}{1-|\lambda|^2}((Id-B(\lambda)^*B(\lambda))x,y)_{H_+}.\]

$\varphi_\pm$ also induce a trivialization of $F$: We can define $\varphi_\pm^\dag(\lambda): F_\lambda\rightarrow H_\pm$ where $\lambda\in \mathbb{D}_\mp$ in its usual way, e.g.,
\[(\varphi_+^\dag(\lambda)\omega, x)_{H_+}=((\omega, \varphi_+(\bar{\lambda})x)),\quad  \forall \lambda\in \mathbb{D}_-,\  \omega\in F_\lambda,\  x\in H_+.\]
Then $\psi_\pm(\lambda):=(\varphi_\mp^\dag(\lambda))^{-1}$ for $\lambda\in \mathbb{D}_\pm$ identify the bundle $F$ with $(\mathbb{D}_+\times H_-)\cup (\mathbb{D}_-\times H_+)$. With $\psi_\pm$, $\mathfrak{H}$ will be interpreted as a reproducing kernel Hilbert space of vector-valued holomorphic functions $f$ on $\mathbb{D}_+\cup \mathbb{D}_-$. More precisely, for $s\in \mathfrak{H}$, the corresponding function $f_s(\lambda):=(\psi_\pm(\lambda))^{-1}s(\lambda)\in H_\mp$ for $\lambda\in \mathbb{D}_\pm$. Let $\mathcal{K}(\lambda, \mu)$ be the localized form of $K(\lambda, \mu)$ w.r.t. $\varphi_\pm$ and $\psi_\pm$, e.g.,
\[\mathcal{K}(\lambda, \mu)=(\psi_+(\lambda))^{-1}\circ K(\lambda, \mu)\circ\varphi_-(\bar{\mu})\in \mathbb{B}(H_-)\]
for $\lambda, \mu\in \mathbb{D}_+$ and
\[\mathcal{K}(\lambda, \mu)=(\psi_-(\lambda))^{-1}\circ K(\lambda, \mu)\circ\varphi_-(\bar{\mu})\in \mathbb{B}(H_-,H_+)\]
for $\lambda\in \mathbb{D}_-, \mu\in \mathbb{D}_+$.
\begin{theorem}\label{t2}With the above notation, for $\lambda, \mu\in \mathbb{D}_+$,
\[\mathcal{K}(\lambda, \mu)=\frac{Id-B(\lambda)B(\mu)^*}{1-\lambda\bar{\mu}};\]
For $\lambda, \mu\in \mathbb{D}_-$,
\[\mathcal{K}(\lambda, \mu)=\frac{Id-B(\bar{\lambda})^*B(\bar{\mu})}{1-\lambda\bar{\mu}};\]
For $\lambda\in \mathbb{D}_+$ and $\mu\in \mathbb{D}_-$,
\[\mathcal{K}(\lambda, \mu)=\frac{B(\lambda)-B(\bar{\mu})}{\lambda-\bar{\mu}};\]
For $\lambda\in \mathbb{D}_-$ and $\mu\in \mathbb{D}_+$,
\[\mathcal{K}(\lambda, \mu)=\frac{B(\bar{\lambda})^*-B(\mu)^*}{\lambda-\bar{\mu}}.\]
\end{theorem}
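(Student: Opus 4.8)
The plan is to strip each of the four localized kernels down to a plain $H$-inner product of values of the fields $\varphi_\pm$, after which Theorem~\ref{t2} becomes a direct restatement of Prop.~\ref{p2}. (Here $\mathcal{K}(\lambda,\mu)$ for the two index regimes not written out above is defined in complete analogy with the two displayed cases.)

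First I would unwind the definitions, keeping careful track of which copy of $\mathbb{D}$ each index lies in. By construction $F^\dag_\lambda=E_{\bar\lambda}$, so the trivializing map of $F^\dag_\mu$ is $\varphi_-(\bar\mu)\colon H_-\to F^\dag_\mu$ when $\mu\in\mathbb{D}_+$ (so that $\bar\mu\in\mathbb{D}_-$) and $\varphi_+(\bar\mu)\colon H_+\to F^\dag_\mu$ when $\mu\in\mathbb{D}_-$; dually $(\psi_+(\lambda))^{-1}=\varphi_-^\dag(\lambda)$ for $\lambda\in\mathbb{D}_+$ and $(\psi_-(\lambda))^{-1}=\varphi_+^\dag(\lambda)$ for $\lambda\in\mathbb{D}_-$. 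Consider the first case $\lambda,\mu\in\mathbb{D}_+$ and take $x,y\in H_-$. Applying the defining relation of $\varphi_-^\dag(\lambda)$, then $K(\lambda,\mu)=\iota_\lambda^\dag\iota_\mu$ together with $((\iota_\lambda^\dag z,\omega))=(z,\iota_\lambda\omega)_H$, and finally that the $\iota$'s are mere inclusions, one obtains
\[(\mathcal{K}(\lambda,\mu)x,y)_{H_-}=((K(\lambda,\mu)\varphi_-(\bar\mu)x,\varphi_-(\bar\lambda)y))=(\varphi_-(\bar\mu)x,\varphi_-(\bar\lambda)y)_H,\]
an inner product now taken in $H$. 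Since $\bar\mu,\bar\lambda\in\mathbb{D}_-$, the second formula of Prop.~\ref{p2}, applied with its two arguments set equal to $\bar\mu$ and $\bar\lambda$, yields exactly $\mathcal{K}(\lambda,\mu)=(Id-B(\lambda)B(\mu)^*)/(1-\lambda\bar\mu)$ once one notes that conjugating $\bar\mu$ and $\bar\lambda$ returns $\mu$ and $\lambda$.

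The remaining three cases go the same way, the only bookkeeping being the choice of $\varphi_+$ versus $\varphi_-$ dictated by which disc the conjugated point sits in. For $\lambda,\mu\in\mathbb{D}_-$ one arrives at $(\varphi_+(\bar\mu)x,\varphi_+(\bar\lambda)y)_H$ with $\bar\mu,\bar\lambda\in\mathbb{D}_+$ and invokes the first formula of Prop.~\ref{p2}; for $\lambda\in\mathbb{D}_+$, $\mu\in\mathbb{D}_-$ one arrives at $(\varphi_+(\bar\mu)x,\varphi_-(\bar\lambda)y)_H$ with $\bar\mu\in\mathbb{D}_+$, $\bar\lambda\in\mathbb{D}_-$ and invokes the third; and for $\lambda\in\mathbb{D}_-$, $\mu\in\mathbb{D}_+$ one arrives at $(\varphi_-(\bar\mu)x,\varphi_+(\bar\lambda)y)_H$, which is the complex conjugate of an instance of the third formula, so that after taking an adjoint it produces $(B(\bar\lambda)^*-B(\mu)^*)/(\lambda-\bar\mu)$. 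In each case one should double-check that the scalar prefactors $1-\lambda\bar\mu$ and $\lambda-\bar\mu$ come out with the correct sign after the conjugate substitutions; this is about the only spot where a sign slip could occur.

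I expect no genuine obstacle. Once the identity $(\mathcal{K}(\lambda,\mu)x,y)=(\varphi_?(\bar\mu)x,\varphi_?(\bar\lambda)y)_H$ is in hand, the theorem is Prop.~\ref{p2} read backwards --- equivalently, the abstract Green formula Eq.~(\ref{gre}) applied to the $\gamma$-fields $\gamma_\pm$. The mildly delicate point is purely notational: correctly threading the fibre-swap $F^\dag_\lambda=E_{\bar\lambda}$ through the definitions of $\psi_\pm$ and $\varphi_\pm^\dag$ so that all the conjugate-linear duals cancel and one is genuinely left with the honest $H$-inner product. It seems cleanest to carry this out in detail for the first case and then to assert the other three by the evident symmetry, as the paper already does for Prop.~\ref{p2} itself.
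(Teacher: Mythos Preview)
Your proposal is correct and follows essentially the same route as the paper: unwind $\mathcal{K}(\lambda,\mu)$ through $(\psi_\pm(\lambda))^{-1}=\varphi_\mp^\dag(\lambda)$ and $K(\lambda,\mu)=\iota_\lambda^\dag\iota_\mu$ to reach $(\varphi_?(\bar\mu)x,\varphi_?(\bar\lambda)y)_H$, then read off the answer from Prop.~\ref{p2}. The paper likewise works out the first and third cases in detail and leaves the rest to symmetry, exactly as you suggest.
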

 \begin{proof}We only check the first and the third formulae, and the others follow similarly. Let $x,y\in H_-$ and $\lambda, \mu\in \mathbb{D}_+$. Then
 \begin{eqnarray*}(\mathcal{K}(\lambda, \mu)x,y)_{H_-}&=&((\psi_+(\lambda))^{-1}\iota_\lambda^\dag\iota_\mu \varphi_-(\bar{\mu})x,y)_{H_-}=(\varphi_-^{\dag}(\lambda)\iota_\lambda^\dag\iota_\mu \varphi_-(\bar{\mu})x,y)_{H_-}\\
 &=&((\iota_\lambda^\dag\iota_\mu \varphi_-(\bar{\mu})x, \varphi_-(\bar{\lambda})y))=(\iota_\mu \varphi_-(\bar{\mu})x, \iota_\lambda\varphi_-(\bar{\lambda})y)_H\\
 &=&( \varphi_-(\bar{\mu})x, \varphi_-(\bar{\lambda})y)_H=\frac{1}{1-\lambda\bar{\mu}}((Id-B(\lambda)B(\mu)^*)x,y)_{H_-}.
 \end{eqnarray*}
 The last equality is due to Prop.~\ref{p2} and the claimed formula then follows. Similarly, let $x\in H_+$, $y\in H_-$, $\lambda\in \mathbb{D}_+$ and $\mu\in \mathbb{D}_-$. Then
 \begin{eqnarray*}(\mathcal{K}(\lambda, \mu)x,y)_{H_-}&=&((\psi_+(\lambda))^{-1}\iota_\lambda^\dag\iota_\mu \varphi_+(\bar{\mu})x,y)_{H_-}=(\varphi_-^{\dag}(\lambda)\iota_\lambda^\dag\iota_\mu \varphi_+(\bar{\mu})x,y)_{H_-}\\
 &=&((\iota_\lambda^\dag\iota_\mu \varphi_+(\bar{\mu})x, \varphi_-(\bar{\lambda})y))=( \varphi_+(\bar{\mu})x, \varphi_-(\bar{\lambda})y)_H.
 \end{eqnarray*}
 The claimed formula again follows from Prop.~\ref{p2}.
 \end{proof}
 The following theorem is of basic importance.
  \begin{theorem}\label{t1}Let $a=(x,y)\in A_T^{\bot_s}\subseteq \mathbb{H}$. Then for $\lambda\in \mathbb{D}_+$,
 \[\lambda f_{\hat{x}}(\lambda)- f_{\hat{y}}(\lambda)=B(\lambda)\Gamma_+a-\Gamma_-a,\]
  and for $\lambda\in \mathbb{D}_-$,
  \[f_{\hat{x}}(\lambda)-\lambda f_{\hat{y}}(\lambda)=\Gamma_+a-B(\bar{\lambda})^*\Gamma_-a.\]
 \end{theorem}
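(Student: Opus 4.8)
The plan is to verify both identities \emph{weakly}: I would test the $\mathbb{D}_+$-statement against an arbitrary $x'\in H_-$ and the $\mathbb{D}_-$-statement against an arbitrary $x'\in H_+$, and recognize each resulting scalar equality as an instance of the abstract Green's formula Eq.~(\ref{gre}) applied to $a$ together with a suitable vector from the $\gamma$-fields.

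In detail, for the first identity fix $\lambda\in\mathbb{D}_+$ and $x'\in H_-$. Unwinding the trivialization $\psi_+=(\varphi_-^\dag)^{-1}$ and then using the reproducing property $\hat x(\lambda)=\iota_\lambda^\dag x$, one computes $(f_{\hat x}(\lambda),x')_{H_-}=((\hat x(\lambda),\varphi_-(\bar\lambda)x'))=(x,\varphi_-(\bar\lambda)x')_H$, the pairing being legitimate because $\varphi_-(\bar\lambda)x'\in pr_2(N_{\bar\lambda})=F^\dag_\lambda$, and likewise for $\hat y$. Since $(\cdot,\cdot)_H$ is conjugate-linear in its second argument, $\lambda(x,\varphi_-(\bar\lambda)x')_H=(x,\bar\lambda\varphi_-(\bar\lambda)x')_H$; and recalling from the discussion of the $\gamma$-fields that $\gamma_-(\bar\lambda)x'=(\bar\lambda\varphi_-(\bar\lambda)x',\varphi_-(\bar\lambda)x')\in N_{\bar\lambda}\subseteq A_T^{\bot_s}$, this gives
\[ [a,\gamma_-(\bar\lambda)x']_{st}=i\,(\lambda f_{\hat x}(\lambda)-f_{\hat y}(\lambda),x')_{H_-}. \]
On the other hand, by the defining properties of the $\gamma$-field one has $\Gamma_-\gamma_-(\bar\lambda)x'=x'$ and $\Gamma_+\gamma_-(\bar\lambda)x'=B(\lambda)^*x'$, so Eq.~(\ref{gre}) evaluates the left-hand side to $i(\Gamma_+a,B(\lambda)^*x')_{H_+}-i(\Gamma_-a,x')_{H_-}=i(B(\lambda)\Gamma_+a-\Gamma_-a,x')_{H_-}$. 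Cancelling the factor $i$ and letting $x'$ range over $H_-$ yields the first claim. For the second identity the same argument applies verbatim, now with $\lambda\in\mathbb{D}_-$, with $H_+$ in place of $H_-$ and $\varphi_+^\dag$ in place of $\varphi_-^\dag$, and with test vector $\gamma_+(\bar\lambda)x'=(\varphi_+(\bar\lambda)x',\bar\lambda\varphi_+(\bar\lambda)x')\in N_{\bar\lambda}$ (here $\bar\lambda\in\mathbb{D}_+$), for which $\Gamma_+\gamma_+(\bar\lambda)x'=x'$ and $\Gamma_-\gamma_+(\bar\lambda)x'=B(\bar\lambda)x'$.

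I expect the only real obstacle to be bookkeeping rather than anything conceptual: one must keep straight which point lies in which disc ($\lambda$ versus its conjugate $\bar\lambda$, $\mathbb{D}_+$ versus $\mathbb{D}_-$), the two nested trivializations (the trivialization $\varphi_\pm$ of $E$ and the induced $\psi_\pm=(\varphi_\mp^\dag)^{-1}$ of $F$), and the placement of the complex conjugates and of the factors of $i$ — since $(\cdot,\cdot)_H$ is conjugate-linear in its second slot while $[\cdot,\cdot]_{st}$ carries an $i$. Once these identifications are pinned down the computation is short and purely algebraic; there is no analytic difficulty, as for each fixed $\lambda$ this is just a weak equality between two vectors of the fibre $H_\mp$.
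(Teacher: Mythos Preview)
Your proof is correct and is essentially the paper's own argument. The only cosmetic difference is that the paper tests against an arbitrary $\omega\in F^\dag_\lambda$ (setting $\tilde\omega=(\bar\lambda\omega,\omega)\in N_{\bar\lambda}$) rather than against $\varphi_-(\bar\lambda)x'$ for $x'\in H_-$, and writes the scalar identity as $(x,\bar\lambda\omega)_H-(y,\omega)_H$ without the global factor $i$; since $\varphi_-(\bar\lambda)$ trivializes $F^\dag_\lambda$ and $\Gamma_-\tilde\omega$ plays the role of your $x'$, the two computations are line-by-line equivalent.
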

 \begin{proof}For $\lambda\in \mathbb{D}_+$ and $\omega\in F^\dag_\lambda$, we have $\tilde{\omega}:=(\bar{\lambda}\omega, \omega)\in N_{\bar{\lambda}}\subseteq A_T^{\bot_s}$ where $\bar{\lambda}\in \mathbb{D}_-$ and due to the abstract Green's formula Eq.~(\ref{gre})
 \begin{eqnarray*}(x,\bar{\lambda} \omega)_H-(y, \omega)_H&=&(\Gamma_+ a,\Gamma_+ \tilde{\omega})_{H_+}-(\Gamma_- a,\Gamma_- \tilde{\omega})_{H_-}\\
 &=&(\Gamma_+ a,B(\lambda)^*\Gamma_- \tilde{\omega})_{H_+}-(\Gamma_- a,\Gamma_- \tilde{\omega})_{H_-}\\
 &=&(B(\lambda)\Gamma_+ a-\Gamma_- a,\Gamma_- \tilde{\omega})_{H_-}.
 \end{eqnarray*}
 On the other side,
 \begin{eqnarray*}(x,\bar{\lambda} \omega)_H-(y, \omega)_H&=&\lambda((\iota_\lambda^\dag x, \omega))-((\iota_\lambda^\dag y, \omega))=((\lambda\hat{x}(\lambda), \omega))-((\hat{y}(\lambda), \omega))\\
 &=&((\lambda\hat{x}(\lambda)-\hat{y}(\lambda), \varphi_-(\bar{\lambda})\Gamma_-\tilde{\omega}))=(\varphi_-^\dag(\lambda)(\lambda\hat{x}(\lambda)-\hat{y}(\lambda)), \Gamma_-\tilde{\omega})_{H_-}\\
 &=&(\lambda f_{\hat{x}}(\lambda)-f_{\hat{y}}(\lambda), \Gamma_-\tilde{\omega})_{H_-}.
 \end{eqnarray*}
 Combining the above computations leads to the first formula. The other formula can be proved in a similar manner and we omit the details.
 \end{proof}
  This theorem actually characterizes $\widehat{A_T^{\bot_s}}$ completely.
  \begin{proposition}$(f,g)\in \mathfrak{H}\oplus_\bot \mathfrak{H}$ lies in $\widehat{A_T^{\bot_s}}$ if and only if there exist $x_\pm \in H_\pm$ such that
  for $\lambda\in \mathbb{D}_+$,
 \[\lambda f(\lambda)- g(\lambda)=B(\lambda)x_+-x_-,\]
  and for $\lambda\in \mathbb{D}_-$,
  \[f(\lambda)-\lambda g(\lambda)=x_+-B(\bar{\lambda})^*x_-.\]
  \end{proposition}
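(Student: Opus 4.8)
The plan is to obtain the statement from Theorem~\ref{t1} and Proposition~\ref{p5} by exploiting linearity. The forward implication is nothing but Theorem~\ref{t1}: if $(f,g)\in\widehat{A_T^{\bot_s}}$ then $(f,g)$ is the image $(f_{\hat x},f_{\hat y})$ of some $a=(x,y)\in A_T^{\bot_s}$, and the two asserted identities hold with $x_\pm:=\Gamma_\pm a\in H_\pm$.

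For the converse, suppose $(f,g)\in\mathfrak H\oplus_\bot\mathfrak H$ satisfies the two identities for some prescribed $x_\pm\in H_\pm$. The idea is to subtract off an element of $A_T^{\bot_s}$ carrying exactly this boundary data. First I would note that the boundary map $(\Gamma_+,\Gamma_-)\colon A_T^{\bot_s}\to H_+\oplus_\bot H_-$ is onto: it is the quotient map onto $A_T^{\bot_s}/A_T$ followed by the chosen symplectic isomorphism, and for the canonical boundary quadruple one may simply take the explicit element $(x_+,x_-)\in Q\oplus_\bot Q_*\subseteq A_T^{\bot_s}$ afforded by Lemma~\ref{l3}. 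Pick such a $b\in A_T^{\bot_s}$ with $\Gamma_\pm b=x_\pm$, and set $(f_0,g_0):=\hat b\in\widehat{A_T^{\bot_s}}$. Applying Theorem~\ref{t1} to $b$ shows that $(f_0,g_0)$ satisfies the very same two identities, with the very same $x_\pm$. Subtracting, $(f-f_0,\,g-g_0)\in\mathfrak H\oplus_\bot\mathfrak H$ satisfies the homogeneous versions $g(\lambda)=\lambda f(\lambda)$ on $\mathbb D_+$ and $f(\lambda)=\lambda g(\lambda)$ on $\mathbb D_-$, which by Proposition~\ref{p5} says precisely $(f-f_0,g-g_0)\in\widehat{A_T}$. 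Since $A_T$ is isotropic, $\widehat{A_T}\subseteq\widehat{A_T^{\bot_s}}$, and therefore $(f,g)=(f-f_0,g-g_0)+(f_0,g_0)\in\widehat{A_T^{\bot_s}}$.

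I expect no real obstacle; what little care is required is twofold. One should make sure Proposition~\ref{p5}, phrased there in terms of the section values, is applied in the present trivialized picture via the identification $s\mapsto f_s$; this is legitimate because the trivializing isomorphisms $\psi_\pm(\lambda)$ are linear and commute with multiplication by $\lambda$, so that $f_{\widehat{Tx}}(\lambda)=\lambda f_{\hat x}(\lambda)$ on $\mathbb D_+$ and $f_{\hat x}(\lambda)=\lambda f_{\widehat{Tx}}(\lambda)$ on $\mathbb D_-$. And one should record surjectivity of $(\Gamma_+,\Gamma_-)$, which is part of the very definition of a boundary quadruple. The conceptual content is simply that $B(\lambda)x_+-x_-$ and $x_+-B(\bar\lambda)^*x_-$ are exactly the inhomogeneities produced by imposing the abstract boundary values $x_\pm$, so the general element of $\mathfrak H\oplus_\bot\mathfrak H$ solving the system is obtained from one particular solution (namely $\hat b$) by adding an arbitrary element of $\widehat{A_T}$.
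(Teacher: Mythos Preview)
Your argument is correct, and it takes a genuinely different route from the paper's proof. The paper establishes sufficiency by a density/orthogonality argument: it defines $S$ to be the set of pairs satisfying the two identities, observes that $S$ is closed, and then shows that any $(f,g)\in S$ orthogonal to $\widehat{\mathcal N}$ must vanish. That last step uses the reproducing property to force $g(0_+)=0$ and $f(0_-)=0$, then evaluates the two identities at $0_\pm$ and invokes $\|B(0_+)\|<1$ to conclude $x_\pm=0$, whence Proposition~\ref{p5} applies. Your approach instead is a direct ``particular solution plus homogeneous solution'' decomposition: surjectivity of the boundary map supplies a $b\in A_T^{\bot_s}$ with the prescribed $\Gamma_\pm b=x_\pm$, Theorem~\ref{t1} shows $\hat b$ absorbs the inhomogeneity, and Proposition~\ref{p5} identifies the remainder with $\widehat{A_T}$. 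This is shorter and more elementary: it avoids Corollary~\ref{c2}, the reproducing-kernel computation, and the strict contractivity of $B(0_+)$ altogether. The paper's route, on the other hand, yields as a byproduct that $\widehat{\mathcal N}$ is dense in the solution set $S$, which is of some independent interest and fits the paper's emphasis on the role of the $N_\lambda$'s.
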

  \begin{proof}The necessity part is just Thm.~\ref{t1}. To prove the sufficiency part, let $S$ be the subspace of elements in $\mathfrak{H}\oplus_\bot \mathfrak{H}$ satisfying the property in the statement of the proposition. Since the evaluation map $ev_\lambda$ is continuous, it's easy to see $S$ is closed in $\mathfrak{H}\oplus_\bot \mathfrak{H}$. Then due to Coro.~\ref{c2}, we only have to prove that $\widehat{\mathcal{N}}$ is dense in $S$. The necessity part already implies $\widehat{\mathcal{N}}\subseteq S$. Now let $(f,g)\in S$ be orthogonal to $\widehat{\mathcal{N}}$. Then for $\mu\in \mathbb{D}_+$ and $\omega\in F_\mu^\dag$, by definition
  \[0=(f, \bar{\mu}f_{\hat{\omega}})_{\mathfrak{H}}+(g, f_{\hat{\omega}})_{\mathfrak{H}}=(\mu f+g, f_{\hat{\omega}})_{\mathfrak{H}}.\]
  Note that $\hat{\omega}(\cdot)=K(\cdot, \mu)\omega$. Then the reproducing property implies from the above equation that
  \[\mu f(\mu)+g(\mu)=0.\]
  In particular, $g(0_+)=0$. Similarly we can find $f(0_-)=0$. On the other side, since $(f,g)\in S$, hence there exist $x_\pm\in H_\pm$ such that
  \[B(0_+)x_+-x_-=0,\quad x_+-B(0_+)^*x_-=0.\]
  Due to the fact that $\|B(0_+)\|<1$, we ought to have $x_\pm=0$. Then by Prop.~\ref{p5}, $(f,g)\in \widehat{A_T}\subset \widehat{\overline{\mathcal{N}}}$. Thus $f=g=0$ and the proposition is proved.
  \end{proof}
  Now we are prepared to construct a functional model for $T$.
 \begin{corollary}If, in terms of the boundary quadruple $(H_\pm, \Gamma_\pm)$, $T$ as an extension of its unitary map part is characterized by $\mathcal{C}\in \mathbb{B}(H_+,H_-)$, i.e., the graph of $T$ is
 \[\{a\in A_T^{\bot_s}|\Gamma_-a=\mathcal{C}\Gamma_+a\},\]
 then
 for $\lambda\in \mathbb{D}_+$,
 \[\lambda f_{\hat{x}}(\lambda)- f_{\widehat{Tx}}(\lambda)=(B(\lambda)-\mathcal{C})\Gamma_+a,\]
  and for $\lambda\in \mathbb{D}_-$,
  \[f_{\hat{x}}(\lambda)-\lambda f_{\widehat{Tx}}(\lambda)=(Id-B(\bar{\lambda})^*\mathcal{C})\Gamma_+a\]
  where $a=(x,Tx)\in A_T^{\bot_s}$.
  \end{corollary}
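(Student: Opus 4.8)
The plan is to obtain this as an immediate specialization of Theorem~\ref{t1}, since the corollary is nothing more than that theorem read off along the graph of $T$. First I would fix $x\in H$ with $a:=(x,Tx)\in A_T^{\bot_s}$ (equivalently, $x$ ranging over the domain of the model-side extension corresponding to $\mathcal{C}$) and apply Theorem~\ref{t1} with $y=Tx$. This gives directly, for $\lambda\in\mathbb{D}_+$,
\[
\lambda f_{\hat{x}}(\lambda)-f_{\widehat{Tx}}(\lambda)=B(\lambda)\Gamma_+a-\Gamma_-a,
\]
and the companion identity $f_{\hat{x}}(\lambda)-\lambda f_{\widehat{Tx}}(\lambda)=\Gamma_+a-B(\bar{\lambda})^*\Gamma_-a$ for $\lambda\in\mathbb{D}_-$.

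Next I would use the hypothesis that the graph of $T$ is cut out of $A_T^{\bot_s}$ by the abstract boundary condition $\Gamma_-a=\mathcal{C}\Gamma_+a$. Substituting $\Gamma_-a=\mathcal{C}\Gamma_+a$ into the two right-hand sides above and factoring out $\Gamma_+a$ produces $(B(\lambda)-\mathcal{C})\Gamma_+a$ on $\mathbb{D}_+$ and $(Id-B(\bar{\lambda})^*\mathcal{C})\Gamma_+a$ on $\mathbb{D}_-$, which are exactly the asserted formulas. That completes the argument.

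I do not expect any real obstacle here: all the substance sits in Theorem~\ref{t1}, and the corollary is a bookkeeping step recording what those identities become once the non-unitary map part of $T$ is encoded by the single operator $\mathcal{C}\in\mathbb{B}(H_+,H_-)$. The only point worth a sentence of care is that such a $\mathcal{C}$ exists and genuinely characterizes the graph of $T$ among intermediate closed subspaces of $A_T^{\bot_s}$; but this is built into the hypothesis and is illustrated by the discussion of boundary quadruples in \S\ref{sec4.1} (the canonical quadruple $(\mathbb{K}^\bot,\mathbb{K}_*^\bot,\Gamma_\pm)$ gives $\mathcal{C}=t$, while the quadruple of Example~\ref{e6} gives $\mathcal{C}=0$), so nothing new has to be established.
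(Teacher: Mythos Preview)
Your proposal is correct and matches the paper's proof exactly: the paper simply says the corollary follows by setting $\Gamma_-a=\mathcal{C}\Gamma_+a$ in Thm.~\ref{t1}, which is precisely the substitution you describe.
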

 \begin{proof}This can be obtained by simply setting $\Gamma_-a=\mathcal{C}\Gamma_+a$ in Thm.~\ref{t1}.
 \end{proof}
 If we set $\mathfrak{T}f_{\hat{x}}=f_{\widehat{Tx}}$, then $\mathfrak{T}$ is obviously a model for $T$ in $\mathfrak{H}$. If furthermore $Id-B(0_+)^*\mathcal{C}$ is (possibly unboundedly) invertible, then
 \[\Gamma_+a=(Id-B(0_+)^*\mathcal{C})^{-1}f_{\hat{x}}(0_-)\]
 and consequently for any $f\in \mathfrak{H}$,
 \[(\mathfrak{T}f)(\lambda)=\lambda f(\lambda)-(B(\lambda)-\mathcal{C})(Id-B(0_+)^*\mathcal{C})^{-1}f(0_-),\quad \forall \lambda\in \mathbb{D}_+,\]
 \[(\mathfrak{T}f)(\lambda)=[f(\lambda)-(Id-B(\bar{\lambda})^*\mathcal{C})(Id-B(0_+)^*\mathcal{C})^{-1}f(0_-)]/\lambda,\quad \forall \lambda\in \mathbb{D}_-.\]
 \begin{example}If $(\mathbb{K}^\bot, \mathbb{K}^\bot_*, \Gamma_\pm)$ is the canonical boundary quadruple, then the corresponding contractive Weyl function satisfies $B(0_+)=0$ and $T$ is parameterized by $\Gamma_-a=t\Gamma_+ a$ for $a\in A_T^{\bot_s}$. Thus the model operator $\mathfrak{T}$ is given in the following way:
 \[(\mathfrak{T}f)(\lambda)=\lambda f(\lambda)-(B(\lambda)-t)f(0_-),\quad \forall\ \lambda\in \mathbb{D}_+,\]
 \[(\mathfrak{T}f)(\lambda)=[f(\lambda)-(Id-B(\bar{\lambda})^*t)f(0_-)]/\lambda,\quad \forall\ \lambda\in \mathbb{D}_-.\]
 \end{example}
 \begin{corollary}\label{c1}If, in terms of the boundary quadruple $(H_\pm, \Gamma_\pm)$, $T$ as an extension of its unitary map part is characterized by $0\in \mathbb{B}(H_+,H_-)$, then
 for any $f\in \mathfrak{H}$,
 \[(\mathfrak{T}f)(\lambda)=\lambda f(\lambda)-B(\lambda)f(0_-),\ \forall\ \lambda\in \mathbb{D}_+\]
 and
 \[(\mathfrak{T}f)(\lambda)=(f(\lambda)-f(0_-))/\lambda,\ \forall\ \lambda\in \mathbb{D}_-.\]
 \end{corollary}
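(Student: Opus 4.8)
The plan is to read off this statement from the preceding corollary by putting $\mathcal{C}=0$, the only thing needing attention being that the invertibility hypothesis invoked there is then vacuous. Indeed, with $\mathcal{C}=0$ the operator $Id-B(0_+)^*\mathcal{C}$ reduces to $Id$, which is boundedly invertible, so the identity $\Gamma_+a=(Id-B(0_+)^*\mathcal{C})^{-1}f_{\hat{x}}(0_-)$ recorded just before the corollary becomes $\Gamma_+a=f_{\hat{x}}(0_-)$; substituting $\mathcal{C}=0$ and this value of $\Gamma_+a$ into the two displayed formulas for $(\mathfrak{T}f)(\lambda)$ gives exactly the two claimed identities.

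For a self-contained derivation I would instead start directly from Thm.~\ref{t1}. Fix $x\in H$ and put $a=(x,Tx)\in A_T^{\bot_s}$; the hypothesis that the graph of $T$ equals $\{a\in A_T^{\bot_s}|\Gamma_-a=0\}$ means precisely $\Gamma_-a=0$, so Thm.~\ref{t1} reads
\[\lambda f_{\hat{x}}(\lambda)-f_{\widehat{Tx}}(\lambda)=B(\lambda)\Gamma_+a\quad(\lambda\in\mathbb{D}_+),\qquad f_{\hat{x}}(\lambda)-\lambda f_{\widehat{Tx}}(\lambda)=\Gamma_+a\quad(\lambda\in\mathbb{D}_-).\]
Evaluating the second identity at $\lambda=0_-$ yields $\Gamma_+a=f_{\hat{x}}(0_-)$. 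Writing $f=f_{\hat{x}}$ and $\mathfrak{T}f:=f_{\widehat{Tx}}$ and feeding $\Gamma_+a=f(0_-)$ back in, one gets $(\mathfrak{T}f)(\lambda)=\lambda f(\lambda)-B(\lambda)f(0_-)$ for $\lambda\in\mathbb{D}_+$ and $\lambda(\mathfrak{T}f)(\lambda)=f(\lambda)-f(0_-)$ for $\lambda\in\mathbb{D}_-$, i.e.\ $(\mathfrak{T}f)(\lambda)=(f(\lambda)-f(0_-))/\lambda$ there. Since every element of $\mathfrak{H}=\widehat{H}$ is of the form $f_{\hat{x}}$, this covers all of $\mathfrak{H}$.

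Two routine points accompany the computation. First, $\mathfrak{T}$ is well defined, i.e.\ $f_{\widehat{Tx}}$ depends on $f_{\hat{x}}$ alone, because $x\mapsto\hat{x}$ is injective ($T$ being c.n.u.), so $f_{\hat{x}}$ recovers $x$ and hence $Tx$. Second, the right-hand side on $\mathbb{D}_-$ is a genuine holomorphic section of $F$: the numerator $f(\lambda)-f(0_-)$ vanishes at $0_-$, so the quotient by $\lambda$ extends holomorphically across $0_-$, and it lies in $\mathfrak{H}$ since it equals $f_{\widehat{Tx}}$. I do not expect any real obstacle here: the statement is a plain specialization, and the clause ``$T$ is characterized by $0$'' is an assumption on the chosen boundary quadruple (for instance the one of Example~\ref{e6} when $\|t\|<1$, where $B=-\Theta_T$), not something to be established as part of the proof.
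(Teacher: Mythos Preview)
Your proposal is correct and matches the paper's own proof, which consists of the single sentence ``Setting $\mathcal{C}=0$ in the above argument leads to the formulae.'' Your elaboration---noting that $Id-B(0_+)^*\mathcal{C}=Id$ is trivially invertible and then reading off the formulas---is exactly the intended specialization, and your alternative self-contained derivation from Thm.~\ref{t1} simply unpacks the same computation.
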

 \begin{proof}Setting $\mathcal{C}=0$ in the above argument leads to the formulae.
 \end{proof}
 \begin{example}If $\|t\|<1$ and the boundary quadruple $(\mathbb{K}^\bot, \mathbb{K}^\bot_*, \Gamma_\pm')$ in Example \ref{e6} is used, then $T$ is parameterized by $\Gamma_-'a=0$ in $A_T^{\bot_s}$ and $B(\lambda)$ is hence $-\Theta_T(\lambda)$. In this specific setting, the result in Coro.~\ref{c1} is essentially the de Branges-Rovnyak model. It should be pointed out that in the de Branges-Rovnyak model, the model space $\mathfrak{H}$ is a space of two-component holomorphic functions on a single disc $\mathbb{D}$. See \cite{ball2014branges} for a detailed and up-to-date account of the de Branges-Rovnyak model, where the model space is denoted by $\mathcal{D}(S)$.
 \end{example}

\section{From marked Nevanlinna discs to contractions}\label{sec5}
It is well-known that associated to a pure-contraction valued Schur function $\Theta(\lambda)$ on the unit disc there is a c.n.u. contraction $T$ such that $\Theta(\lambda)$ is precisely the Sz.-Nagy-Foias characteristic function of $T$. In our formalism, the characteristic function is replaced by the Weyl curve. Since the Weyl curve of a c.n.u. contraction $T$ only contains information concerning the unitary map part of $T$, we can not expect that the Weyl curve determines the contraction $T$ totally.

Let $H$ be a strong symplectic Hilbert space with signature $(n_+,n_-)$ and $W_\pm(H)$ the complex manifolds defined in \S~\ref{sec2}.
\begin{definition}A Nevanlinna disc $N(\lambda)$ in $W_+(H)$ is a holomorphic map from the unit disc $\mathbb{D}$ to $W_+(H)$. If additionally $L$ is a maximal positive-definite subspace of $H$, we call the pair $(N(\lambda), L)$ a marked Nevanlinna disc.
\end{definition}
Note that if $L$ is maximally completely positive-definite, then $L$ is just an additional point in $W_+(H)$ beside $N(\lambda)$. Motivated by the construction in \S~\ref{sec3.2} and the investigation in \S~\ref{sec4}, in this section we show how a contraction can be constructed from a marked Nevanlinna disc.

Given a Nevanlinna disc $N(\lambda)$, we can extend it to a map $\mathcal{N}(\lambda)$ from $\mathbb{D}_+\cup \mathbb{D}_-$ to $W_+(H)\cup W_-(H)$ by simply setting $\mathcal{N}(\lambda)=N(\lambda)$ for $\lambda\in \mathbb{D}_+$ and $\mathcal{N}(\lambda)=N(\bar{\lambda})^{\bot_s}\in W_-(H)$ for $\lambda\in \mathbb{D}_-$. We pull back the tautological holomorphic vector bundle over $W_+(H)\cup W_-(H)$ to obtain a holomorphic Hermitian vector bundle $E$ over $\mathbb{D}_+\cup \mathbb{D}_-$. Exchanging the fiber $E_\lambda$ at $\lambda\in \mathbb{D}_+$ with the fiber $E_{\bar{\lambda}}$ for $\bar{\lambda}\in \mathbb{D}_-$ produces an anti-holomorphic vector bundle $F^\dag$. Let $F$ be the conjugate linear dual of $F^\dag$, which is holomorphic and equipped with the induced Hermitian structure.

 Let us construct a reproducing kernel on $F$. Note that at each $\lambda\in \mathbb{D}_+\cup \mathbb{D}_-$ there is the decomposition $H=F^\dag_\lambda\oplus F^\dag_{\bar{\lambda}}$, where if $\lambda$ lies in one copy of the disc, then $\bar{\lambda}$ lies in the other. Denote the projection along $F^\dag_{\bar{\lambda}}$ onto $F^\dag_\lambda$ by $\mathcal{P}_\lambda$. For $\lambda, \mu\in \mathbb{D}_+\cup \mathbb{D}_-$, we can construct a map $\mathcal{Q}(\lambda, \mu)$ from $F^\dag_\mu$ to $F_\lambda$: If $\omega\in F^\dag_\mu$,
then $\mathcal{P}_\lambda\omega\in F^\dag_\lambda$ and $(\mathcal{P}_\lambda\omega,\cdot)_\lambda$ (the Hermitian structure on the fiber is used) is a conjugate-linear functional on $F^\dag_\lambda$. Consequently, $(\mathcal{P}_\lambda\omega,\cdot)_\lambda\in F_\lambda$ and we simply set $\mathcal{Q}(\lambda, \mu)\omega=(\mathcal{P}_\lambda\omega,\cdot)_\lambda$. The kernel $K(\lambda, \mu)$ now can be defined as follows:
\begin{itemize}
  \item If $\lambda, \mu$ are in the same copy of the disc, we set $K(\lambda, \mu)=\frac{\mathcal{Q}(\lambda,\mu)}{1-\lambda\bar{\mu}}$;
  \item If $\lambda,\mu$ are in different copies of the disc, we set $K(\lambda, \mu)=-\frac{\mathcal{Q}(\lambda,\mu)}{\lambda-\bar{\mu}}$.
  \end{itemize}
In the latter case, if $\lambda=\bar{\mu}$, $\mathcal{Q}(\bar{\mu},\mu)$ is certainly zero. It turns out that $\mathcal{Q}(\lambda, \mu)$ as an analytic function in $\lambda$ has a zero at $\bar{\mu}$, and we can simply set $K(\bar{\mu},\mu)$ to be the limit of the above formula as $\lambda$ approaches $\bar{\mu}$.

To see $K(\lambda, \mu)$ is a sesqui-analytic, locally uniformly bounded and positive kernel, we can choose a polarization of $H$ to obtain a localized form of $K(\lambda, \mu)$ and check these properties routinely. Without loss of generality, we can choose the polarization $H_+:=N(0)=\mathcal{N}(0_+)$ and identify $H$ with the standard strong symplectic Hilbert space $H_+\oplus_\bot H_-$ in Example \ref{e3} where $H_-=H_+^{\bot_s}$. In this way, the Nevanlinna disc has the following representation:
\[N(\lambda)=\{(x,B(\lambda)x)\in H_+\oplus_\bot H_-|x\in H_+\},\]
where $B(\lambda)$ is a $\mathbb{B}(H_+,H_-)$-valued Schur function such that $\|B(\lambda)\|<1$. As a consequence,
\[N(\lambda)^{\bot_s}=\{(B(\lambda)^\ast x,x)\in H_+\oplus_\bot H_-|x\in H_-\}.\]

We set $\gamma_+(\lambda)x=(x,B(\lambda)x)\in E_\lambda$ for $\lambda\in \mathbb{D}_+$, $x\in H_+$ and $\gamma_-(\lambda)x=(B(\bar{\lambda})^\ast x,x)\in E_\lambda$ for $\lambda\in \mathbb{D}_-$, $x\in H_-$. These provide a trivialization for $E$ and $F^\dag$. In particular, we find that on $E_\lambda$ for $\lambda\in \mathbb{D}_+$ the Hermitian structure is given by
\[(\gamma_+(\lambda)x, \gamma_+(\lambda)y)_\lambda=((Id-B(\lambda)^*B(\lambda))x,y)_+,\quad x,y\in H_+.\]
Similarly, on $E_\lambda$ for $\lambda\in \mathbb{D}_-$,
\[(\gamma_-(\lambda)x, \gamma_-(\lambda)y)_\lambda=((Id-B(\bar{\lambda})B(\bar{\lambda})^*)x,y)_-,\quad x,y\in H_-.\]
Additionally, we define $\varphi_\mp^\dag(\lambda): F_\lambda\rightarrow H_\mp$ for $\lambda\in \mathbb{D}_\pm$ via
\[(\varphi_\mp^\dag(\lambda)\omega, x)_\mp=((\omega, \gamma_\mp(\bar{\lambda})x)),\quad \forall\ \omega\in F_\lambda, \ x\in H_\mp.\]
Set $\psi_\pm(\lambda):=(\varphi_\mp^\dag(\lambda))^{-1}$. They can be used to trivialize $F$. Let $\mathcal{K}(\lambda, \mu)$ be the localized form of $K(\lambda,\mu)$ w.r.t. $\gamma_\pm$ and $\psi_\pm$.
\begin{proposition}In terms of the above $\gamma_\pm$ and $\psi_\pm$, $\mathcal{K}(\lambda, \mu)$ takes the same form as in Thm.~\ref{t2}.
\end{proposition}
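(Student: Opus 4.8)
The plan is to reduce the assertion to an explicit computation in the trivialization furnished by the polarization $H_+=N(0)$ and the associated Schur function $B(\lambda)$, in parallel with the proof of Thm.~\ref{t2} and with \cite[\S~5.2]{wang2024complex}. The first step is to write out the fibers of $F^\dag$ concretely. Since $F^\dag$ is $E$ with the fibers over $\lambda\in\mathbb{D}_+$ and over $\bar\lambda\in\mathbb{D}_-$ interchanged, one has $F^\dag_\lambda=\mathcal{N}(\bar\lambda)=N(\lambda)^{\bot_s}=\{(B(\lambda)^*z,z)|z\in H_-\}$ for $\lambda\in\mathbb{D}_+$, and $F^\dag_\lambda=\mathcal{N}(\bar\lambda)=N(\bar\lambda)=\{(w,B(\bar\lambda)w)|w\in H_+\}$ for $\lambda\in\mathbb{D}_-$. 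The induced fiber Hermitian structure is the restriction of $i[\cdot,\cdot]_{st}$ over $\mathbb{D}_+$ and of $-i[\cdot,\cdot]_{st}$ over $\mathbb{D}_-$; evaluating the formula of Example~\ref{e3} gives, for instance over $\mathbb{D}_+$, $((B(\lambda)^*z,z),(B(\lambda)^*y,y))_\lambda=((Id-B(\lambda)B(\lambda)^*)z,y)_-$ on the fiber $F^\dag_\lambda$, and the mirror formula over $\mathbb{D}_-$ — in agreement with the Hermitian structures on $E$ recorded just above. Unwinding the definitions of $\varphi_\mp^\dag$, $\psi_\pm$ and $K(\lambda,\mu)$ then rewrites $(\mathcal{K}(\lambda,\mu)x,y)$ as $c(\lambda,\mu)\,(\mathcal{P}_\lambda\gamma(\bar\mu)x,\gamma(\bar\lambda)y)_\lambda$, where $x,y$ range over the relevant $H_\pm$, $\gamma$ denotes whichever of $\gamma_\pm$ trivializes the fiber in question, and $c(\lambda,\mu)=\frac{1}{1-\lambda\bar\mu}$ when $\lambda,\mu$ lie in the same copy of the disc and $c(\lambda,\mu)=-\frac{1}{\lambda-\bar\mu}$ otherwise.

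The substantive step is computing the oblique projection $\mathcal{P}_\lambda$ of $H$ onto $F^\dag_\lambda$ along $F^\dag_{\bar\lambda}$. I would treat $\lambda,\mu\in\mathbb{D}_+$ as the model case: decompose $\gamma_-(\bar\mu)x=(B(\mu)^*x,x)$ as $(B(\lambda)^*z,z)+(w,B(\lambda)w)$ with $z\in H_-$, $w\in H_+$ — legitimate precisely because $N(\lambda)$, being a point of $W_+(H)$, is maximal \emph{completely} positive-definite, i.e. $\|B(\lambda)\|<1$, so that $H=N(\lambda)\oplus N(\lambda)^{\bot_s}$ and $\mathcal{P}_\lambda$ is bounded. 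The resulting $2\times2$ system solves to $w=(Id-B(\lambda)^*B(\lambda))^{-1}(B(\mu)^*-B(\lambda)^*)x$ and $z=x-B(\lambda)w$; pairing $\mathcal{P}_\lambda\gamma_-(\bar\mu)x=(B(\lambda)^*z,z)$ with $\gamma_-(\bar\lambda)y=(B(\lambda)^*y,y)$ in the fiber inner product and using $(Id-B(\lambda)B(\lambda)^*)B(\lambda)=B(\lambda)(Id-B(\lambda)^*B(\lambda))$, the whole expression collapses to $((Id-B(\lambda)B(\mu)^*)x,y)_-$. Hence $\mathcal{K}(\lambda,\mu)=\frac{Id-B(\lambda)B(\mu)^*}{1-\lambda\bar\mu}$, the first formula of Thm.~\ref{t2}.

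The other three cases proceed identically. The case $\lambda,\mu\in\mathbb{D}_-$ is the exact mirror (interchange $H_+$ and $H_-$ and replace $B(\cdot)$ by $B(\bar{\cdot})^*$). For $\lambda\in\mathbb{D}_+$, $\mu\in\mathbb{D}_-$ one decomposes instead $\gamma_+(\bar\mu)x=(x,B(\bar\mu)x)$ along $F^\dag_\lambda\oplus F^\dag_{\bar\lambda}=N(\lambda)^{\bot_s}\oplus N(\lambda)$; now the algebra produces $(B(\bar\mu)-B(\lambda))x$ in the $F^\dag_\lambda$-slot, and combined with the normalization $-\frac{1}{\lambda-\bar\mu}$ this yields $\mathcal{K}(\lambda,\mu)=\frac{B(\lambda)-B(\bar\mu)}{\lambda-\bar\mu}$, the value at $\lambda=\bar\mu$ being the (existing) analytic limit, consistent with the already-noted vanishing of $\mathcal{Q}(\lambda,\mu)$ at $\lambda=\bar\mu$. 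The case $\lambda\in\mathbb{D}_-$, $\mu\in\mathbb{D}_+$ follows by the same computation, and as a check it equals $\mathcal{K}(\mu,\lambda)^*$. Once the four formulas are established they are literally the reproducing kernels of Thm.~\ref{t2}, which also re-proves for free that $K(\lambda,\mu)$ is positive, sesqui-analytic and locally uniformly bounded.

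I expect the main obstacle to be purely organizational: one must track, for each of $\lambda,\mu,\bar\lambda,\bar\mu$, which copy of the disc it sits in, and hence whether the corresponding fiber of $F^\dag$ is an $N(\cdot)$ or an $N(\cdot)^{\bot_s}$ and whether $B$ or $B^*$ appears — a slip here flips a sign or inverts an operator. The only point that is analytic rather than algebraic is that $\mathcal{P}_\lambda$ is a \emph{bounded} projection and that $Id-B(\lambda)^*B(\lambda)$, $Id-B(\lambda)B(\lambda)^*$ are invertible; both rest on $\|B(\lambda)\|<1$, which holds because each $N(\lambda)$ is a point of $W_+(H)$. Granting that, every line is a short computation of the type carried out in \cite[\S~5.2]{wang2024complex}.
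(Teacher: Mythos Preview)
Your proposal is correct and follows essentially the same route as the paper: decompose $\gamma_\mp(\bar\mu)x$ along $F^\dag_\lambda\oplus F^\dag_{\bar\lambda}$, solve the resulting $2\times2$ system using $\|B(\lambda)\|<1$, and evaluate the fiber inner product to obtain the four formulae of Thm.~\ref{t2}. The only cosmetic difference is that you solve first for the $F^\dag_{\bar\lambda}$-component $w$ and then recover $z=x-B(\lambda)w$ (invoking the intertwining $(Id-B(\lambda)B(\lambda)^*)B(\lambda)=B(\lambda)(Id-B(\lambda)^*B(\lambda))$ to collapse the expression), whereas the paper solves directly for the $F^\dag_\lambda$-component $y_-=(Id-B(\lambda)B(\lambda)^*)^{-1}(Id-B(\lambda)B(\mu)^*)x$; and you treat the cross case $\lambda\in\mathbb{D}_+,\ \mu\in\mathbb{D}_-$ while the paper treats its conjugate.
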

\begin{proof}We only consider the cases (i) $\lambda, \mu\in \mathbb{D}_+$ and (ii) $\lambda\in \mathbb{D}_-, \mu\in \mathbb{D}_+$. Other cases can be checked similarly.

(i) Let $x\in H_-$. Then $\gamma_-(\bar{\mu})x=(B(\mu)^*x, x)$. According to the decomposition $H=F_\lambda^\dag \oplus F_{\bar{\lambda}}^\dag$, we have
\[(B(\mu)^*x, x)=(B(\lambda)^*y_-, y_-)+(y_+, B(\lambda)y_+)\]
for some $y_\pm\in H_\pm$, i.e.,
\[B(\mu)^*x=B(\lambda)^*y_-+y_+,\quad x=y_-+B(\lambda)y_+.\]
We can get
\begin{equation}y_-=(Id-B(\lambda)B(\lambda)^*)^{-1}(Id-B(\lambda)B(\mu)^*)x.\label{e9}\end{equation}
Note that by definition $\mathcal{P}_\lambda\gamma_-(\bar{\mu})x=\gamma_-(\bar{\lambda})y_-$. Therefore for any $w\in H_-$,
\[(\mathcal{P}_\lambda\gamma_-(\bar{\mu})x, \gamma_-(\bar{\lambda})w)_\lambda=(\gamma_-(\bar{\lambda})y_-, \gamma_-(\bar{\lambda})w)_\lambda=((Id-B(\lambda)B(\lambda)^*)y_-,w)_-,\]
where $(\cdot, \cdot)_\lambda$ is the inner product on $F^\dag_\lambda$. Due to Eq.~(\ref{e9}), we have
\[((\mathcal{Q}(\lambda, \mu)\gamma_-(\bar{\mu})x,\gamma_-(\bar{\lambda})w))=(\mathcal{P}_\lambda\gamma_-(\bar{\mu})x, \gamma_-(\bar{\lambda})w)_\lambda=((Id-B(\lambda)B(\mu)^*)x,w)_-.\]
This certainly means
\[\mathcal{K}(\lambda, \mu)=\frac{Id-B(\lambda)B(\mu)^*}{1-\lambda \bar{\mu}}.\]
(ii) Let $x\in H_-$ and $\gamma_-(\bar{\mu})x=(B(\mu)^*x, x)$. Like in (i), now we have
\[(B(\mu)^*x, x)=(y_+, B(\bar{\lambda})y_+)+(B(\bar{\lambda})^*y_-, y_-)\]
for some $y_\pm\in H_\pm$. We can obtain
\[y_+=(Id-B(\bar{\lambda})^*B(\bar{\lambda}))^{-1}(B(\mu)^*-B(\bar{\lambda})^*)x.\]
By definition, now $\mathcal{P}_\lambda\gamma_-(\bar{\mu})x=\gamma_+(\bar{\lambda})y_+$. For $w\in H_+$,
\[(\mathcal{P}_\lambda\gamma_-(\bar{\mu})x, \gamma_+(\bar{\lambda})w)_\lambda=(\gamma_+(\bar{\lambda})y_+,\gamma_+(\bar{\lambda})w)_\lambda=((B(\mu)^*-B(\bar{\lambda})^*)x,w)_+,\]
implying in this case
\[\mathcal{K}(\lambda,\mu)=\frac{B(\bar{\lambda})^*-B(\mu)^*}{\lambda-\bar{\mu}}.\]
\end{proof}
Let $\mathfrak{H}$ be the reproducing kernel Hilbert space generated by $K(\lambda, \mu)$ and $\mathbb{H}:=\mathfrak{H}\oplus_\bot \mathfrak{H}$ equipped with the standard strong symplectic structure in Example \ref{e3} where $H_+=H_-=\mathfrak{H}$. Define
\[\mathfrak{A}=\{(f,g)\in \mathbb{H}|g(\lambda)=\lambda f(\lambda),\ \forall \lambda\in \mathbb{D}_+ \ \textup{and}\ f(\lambda)=\lambda g(\lambda),\ \forall \lambda\in \mathbb{D}_-\}.\]
Two Nevanlinna discs in $W_+(H_1)$ and $W_+(H_2)$ respectively are called congruent if there is a symplectic isomorphism between $H_1$ and $H_2$, turning one Nevanlinna disc into the other.
\begin{proposition}$\mathfrak{A}$ is isotropic in $\mathbb{H}$, whose Weyl curve is congruent to $N(\lambda)$.
\end{proposition}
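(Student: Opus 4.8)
The plan is to deduce the proposition from its already-established counterpart for symmetric operators by means of the inverse Cayley transform, and to indicate the parallel direct argument in the disc picture as an alternative. Precomposing $N(\lambda)$ with the conformal map $z=\frac{\lambda-i}{\lambda+i}$ of $\mathbb{C}_+$ onto $\mathbb{D}$ produces a Nevanlinna curve $\widetilde N$ over $\mathbb{C}_+$ valued in $W_+(H)$; the construction recalled in \S~\ref{sec3.2} (namely \cite[\S~5.2]{wang2024complex}) then attaches to $\widetilde N$ a reproducing kernel Hilbert space $\widetilde{\mathfrak H}$, on which multiplication by $\lambda$ is a simple symmetric operator $\widetilde{\mathfrak T}$ whose Weyl curve is congruent to $\widetilde N$.

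The first step is to identify $\mathfrak H$ with $\widetilde{\mathfrak H}$. Using the elementary identity $1-\frac{\lambda-i}{\lambda+i}\overline{\left(\frac{\mu-i}{\mu+i}\right)}=\frac{-2i(\lambda-\bar\mu)}{(\lambda+i)(\bar\mu-i)}$ (and its companion for $\lambda,\mu$ in different half-planes), one checks that the kernel $K(\cdot,\cdot)$ built in this section out of $N(\lambda)$ over $\mathbb{D}_+\cup\mathbb{D}_-$ equals, after the substitution $z=\frac{\lambda-i}{\lambda+i}$, the kernel of \S~\ref{sec3.2} up to the attendant conformal multipliers. Hence the map $f\mapsto m\cdot(f\circ z^{-1})$, for the appropriate conformal multiplier $m$, is a unitary identification $\widetilde{\mathfrak H}\to\mathfrak H$ carrying the bundles $E,F$, the universal Weyl curves and all the associated data of \S~\ref{sec3.2} onto those of this section; and under it the Cayley transform $C=(\widetilde{\mathfrak T}-i)(\widetilde{\mathfrak T}+i)^{-1}$ of $\widetilde{\mathfrak T}$, which is multiplication by $z=\frac{\lambda-i}{\lambda+i}$ in $\widetilde{\mathfrak H}$, becomes multiplication by the disc variable in $\mathfrak H$ --- i.e. by $\lambda$ over $\mathbb{D}_+$ and by $1/\lambda$ over $\mathbb{D}_-$ --- whose graph is precisely $\mathfrak A$.

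Since $\widetilde{\mathfrak T}$ is a simple symmetric operator, its graph is isotropic for the non-standard symplectic structure $[\cdot,\cdot]_{new}$ on $\mathfrak H\oplus_\bot\mathfrak H$, so by Example~\ref{e8} its Cayley transform $\mathfrak A$, which is closed, is isotropic for $[\cdot,\cdot]_{st}$, that is, in $\mathbb H$; and because the Cayley transform also carries the universal Weyl curve $W_\lambda$ and the subspaces $W_\lambda\cap(\cdot)^{\bot_s}$ correspondingly (as recalled in \S~\ref{sec4.1}), the Weyl curve of $\mathfrak A$ --- the image of $\mathfrak A^{\bot_s}\cap W_\lambda$ in $\mathfrak A^{\bot_s}/\mathfrak A$ --- is congruent to the image of the Weyl curve of $\widetilde{\mathfrak T}$ under $z=\frac{\lambda-i}{\lambda+i}$, that is, to $N(\lambda)$.

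The one genuinely non-formal ingredient on this route is the faithful transport of all the reproducing-kernel data under the conformal change of variable with its multiplier --- the ``slight modification of \cite[\S~5.2]{wang2024complex}'' referred to in the introduction; the rest is bookkeeping. I would also record the direct alternative: staying in the disc picture, one retraces the proofs of Prop.~\ref{p5}, Coro.~\ref{c2}, Thm.~\ref{t1} and the proposition following it, now with the kernel $\mathcal K$ of Thm.~\ref{t2}. This shows that $\mathfrak A^{\bot_s}/\mathfrak A$ has signature $(n_+,n_-)$, that the boundary quadruple on it induced by the $\gamma_\pm,\psi_\pm$-trivialization has contractive Weyl function $B(\lambda)$, and hence --- since $N(\lambda)=\{(x,B(\lambda)x):x\in H_+\}$ in the polarization $H_+=N(0)$ --- that the Weyl curve of $\mathfrak A$ is congruent to $N$, the isotropy of $\mathfrak A$ dropping out exactly as in Prop.~\ref{p5}. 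On this alternative route the point needing real work is the density statement $\overline{\textup{span}\{\mathcal N_\lambda:\lambda\in\mathbb{D}_+\cup\mathbb{D}_-\}}=\mathfrak A^{\bot_s}$ with $\mathcal N_\lambda:=W_\lambda\cap\mathfrak A^{\bot_s}$ (the analogue of Coro.~\ref{c2}).
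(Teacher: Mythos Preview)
Your primary route via the Cayley transform is sound and genuinely different from the paper's argument. The paper works directly in the disc picture: it introduces the auxiliary subspace
\[
\mathbb{L}=\bigl\{(f,g;x_+,x_-)\in\mathbb{H}\oplus_\bot\check{H}\ \big|\ g(\lambda)=\lambda f(\lambda)-(B(\lambda)x_+-x_-)\ \text{on}\ \mathbb{D}_+,\ f(\lambda)=\lambda g(\lambda)+(x_+-B(\bar\lambda)^*x_-)\ \text{on}\ \mathbb{D}_-\bigr\}
\]
in the symplectic direct sum $\mathbb{H}\oplus_\bot\check{H}$ (where $\check{H}$ carries the opposite symplectic form), proves $\mathbb{L}$ is Lagrangian by adapting \cite[Prop.~5.21]{wang2024complex}, and reads off the isotropy of $\mathfrak{A}$ from the embedding $(f,g)\mapsto(f,g;0,0)$. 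The characterisation of $\mathfrak{A}^{\bot_s}$ and an explicit boundary quadruple $(H_\pm,\Gamma_\pm)$ with contractive Weyl function $B(\lambda)$ then come from the analogue of \cite[Prop.~5.24]{wang2024complex}, which immediately gives congruence of the Weyl curve with $N(\lambda)$.

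Your reduction is conceptually more economical --- it outsources everything to the already-proved half-plane case --- but the price is exactly the multiplier bookkeeping you flag, together with the check that the unitary $\widetilde{\mathfrak H}\to\mathfrak H$ (applied componentwise) really carries $\beta(\text{graph of }\widetilde{\mathfrak T})$ onto $\mathfrak A$, not merely matches the kernels. The paper's $\mathbb{L}$-approach avoids the conformal change of variable altogether at the cost of redoing the structural work from \cite{wang2024complex} in the disc setting. Your ``direct alternative'' is close in spirit to what the paper does, though the paper organises everything through the single Lagrangian $\mathbb{L}$ rather than by separately retracing Prop.~\ref{p5}, Coro.~\ref{c2} and Thm.~\ref{t1}; in particular the density statement you isolate as the hard step is precisely what the Lagrangian property of $\mathbb{L}$ packages.
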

\begin{proof}As the proof is similar to that of Thm.~5.20 in \cite{wang2024complex}, we only outline the proof. We use $\check{H}$ to denote the same Hilbert space $H$ but with the opposite strong symplectic structure $-[\cdot, \cdot]$. As in Example \ref{e10}, we obtain the direct sum $\mathbb{H}\oplus_\bot \check{H}$. In the following, we shall still use the identification $H=H_+\oplus_\bot H_-$ and consider $\mathfrak{H}$ as a space of holomorphic functions on $\mathbb{D}_+\cup \mathbb{D}_-$. In $\mathbb{H}\oplus_\bot \check{H}$, we can define a subspace
\[\mathbb{L}:=\{(f,g;x_+,x_-)\in \mathbb{H}\oplus_\bot \check{H}| \textup{for} \, \lambda\in \mathbb{D}_+,\, g(\lambda)=\lambda f(\lambda)-(B(\lambda)x_+-x_-),\]
\[\, \textup{for} \, \lambda\in \mathbb{D}_-,\, f(\lambda)=\lambda g(\lambda)+(x_+-B(\bar{\lambda})^*x_-)\}.\]
One can adapt the proof of Prop.~5.21 in \cite{wang2024complex} to show that $\mathbb{L}$ is Lagrangian in $\mathbb{H}\oplus_\bot \check{H}$. $\mathfrak{A}$ can be embedded in $\mathbb{L}$ by mapping $(f,g)\in \mathfrak{A}$ to $(f,g;0,0)\in \mathbb{L}$. This is enough to show $\mathfrak{A}$ is isotropic.

$\mathfrak{A}^{\bot_s}$ can also be characterized in terms of $\mathbb{L}$:
\[\mathfrak{A}^{\bot_s}=\{(f,g)\in \mathbb{H}| \exists x_\pm\in H_\pm,\ s.t.\ (f,g; x_+, x_-)\in \mathbb{L}\}.\]
This can be proved in the same way Prop.~5.24 in \cite{wang2024complex} was proved. For $a=(f,g)\in \mathfrak{A}^{\bot_s}$, the above $x_\pm\in H_\pm$ actually are uniquely determined. Set $\Gamma_\pm a=x_\pm$. Then $(H_\pm, \Gamma_\pm)$ is a boundary quadruple for $\mathfrak{A}^{\bot_s}$ and the corresponding contractive Weyl function is precisely $B(\lambda)$.
\end{proof}
We know very well now how to associate a contraction $\mathfrak{T}$ to a marked Nevanlinna disc $(N(\lambda),L)$. One can use $N(\lambda)$ to define a reproducing kernel Hilbert space $\mathfrak{H}$ in the above manner and obtain $\mathfrak{A}^{\bot_s}$. To obtain $\mathfrak{T}$ amounts to cutting out a suitable subspace of $\mathfrak{A}^{\bot_s}$. If as before the polarization $N(0)$ in $H$ is chosen and hence $L$ is parameterized by $\mathcal{C}\in \mathbb{B}(H_+,H_-)$, then the graph of $\mathfrak{T}$ is
\[\{a\in \mathfrak{A}^{\bot_s}|\Gamma_-a=\mathcal{C}\Gamma_+a\}.\]
In particular, given a c.n.u. contraction $T$, we have a corresponding marked Nevanlinna disc $(N(\lambda), L)$ in $W_+(\mathcal{B}_T)$ where $N(\lambda)$ is the Weyl curve of $T$ and $L$ is the maximal positive-definite subspace in $\mathcal{B}_T$ determined by the non-unitary map part of $T$. Then the above construction gives an alternative way to produce the functional model for $T$ in \S~\ref{sec4}.

Two marked Nevanlinna discs $(N_1(\lambda), L_1)$ in $W_+(H_1)$ and $(N_2(\lambda), L_2)$ in $W_+(H_2)$ are called congruent if there is a symplectic isomorphism $\Phi$ between $H_1$ and $H_2$ such that $\Phi$ turns $N_1(\lambda)$ into $N_2(\lambda)$ and $L_1$ into $L_2$. The following proposition can be obtained without much effort.
\begin{proposition}Two c.n.u. contractions are unitarily equivalent if and only if their corresponding marked Nevanlinna discs are congruent.
\end{proposition}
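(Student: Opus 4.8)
The plan is to prove both implications by transporting the relevant structures along an explicit isomorphism, invoking the constructions of \S~\ref{sec4.2} and \S~\ref{sec5}.

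For the necessity I would start from a unitary $U\colon H_1\to H_2$ with $UT_1=T_2U$. Since $U$ intertwines $Id-T_1^*T_1$ with $Id-T_2^*T_2$ and $Id-T_1T_1^*$ with $Id-T_2T_2^*$, it carries $\mathbb{K}^{(1)}$ onto $\mathbb{K}^{(2)}$ and $\mathbb{K}_*^{(1)}$ onto $\mathbb{K}_*^{(2)}$, so the isometry $U\oplus U$ on $\mathbb{H}_i:=H_i\oplus_\bot H_i$, which is a symplectic isomorphism for $[\cdot,\cdot]_{st}$, maps $A_{T_1}$ onto $A_{T_2}$ and hence $A_{T_1}^{\bot_s}$ onto $A_{T_2}^{\bot_s}$; passing to quotients it descends to a symplectic isomorphism $\Phi\colon\mathcal{B}_{T_1}\to\mathcal{B}_{T_2}$. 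Because $U\oplus U$ takes the universal Weyl curve $W_\lambda$ in $\mathbb{H}_1$ to that in $\mathbb{H}_2$, $\Phi$ sends $N_\lambda^{(1)}$ to $N_\lambda^{(2)}$ for each $\lambda$ and so turns $W_{T_1}$ into $W_{T_2}$; since $(U\oplus U)Gr_{T_1}=Gr_{T_2}$, it sends the mark $L_1$, i.e.\ the image of $Gr_{T_1}$ in $\mathcal{B}_{T_1}$, onto $L_2$. Thus the marked Nevanlinna discs are congruent.

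For the sufficiency, suppose $\Phi\colon\mathcal{B}_{T_1}\to\mathcal{B}_{T_2}$ is a symplectic isomorphism with $\Phi(W_{T_1}(\lambda))=W_{T_2}(\lambda)$ and $\Phi(L_1)=L_2$. The crucial observation is that, although $\Phi$ need not be an isometry of Hilbert spaces, it preserves $-i[\cdot,\cdot]$ and hence the Hermitian structure on every fibre of the characteristic bundle; together with $\Phi(W_{T_1}(\lambda))=W_{T_2}(\lambda)$ this shows that $\Phi$ induces a fibrewise-isometric holomorphic isomorphism from $E^{(1)}$ onto $E^{(2)}$ over $\mathbb{D}_+\cup\mathbb{D}_-$, and therefore from $F^{(1)\dag}$ onto $F^{(2)\dag}$ and from $F^{(1)}$ onto $F^{(2)}$. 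Since $\Phi$ also intertwines the skew projections $\mathcal{P}_\lambda$ (onto $W_{T_i}(\lambda)^{\bot_s}$ along $W_{T_i}(\lambda)$), it intertwines the maps $\mathcal{Q}(\lambda,\mu)$ and hence carries $K^{(1)}(\lambda,\mu)$ to $K^{(2)}(\lambda,\mu)$. By the uniqueness of the reproducing kernel Hilbert space attached to a positive kernel, post-composition with this bundle isomorphism is a unitary $\widetilde\Phi\colon\mathfrak{H}_1\to\mathfrak{H}_2$ which maps $\mathfrak{A}_1$ onto $\mathfrak{A}_2$ (the defining multiplication-by-$\lambda$ relation being intrinsic), hence $\mathfrak{A}_1^{\bot_s}$ onto $\mathfrak{A}_2^{\bot_s}$, and intertwines the boundary quadruples up to the restrictions $\Phi_\pm$ of $\Phi$ to $W_{T_i}(0_+)$ and $W_{T_i}(0_-)$, which are themselves isometric for $(\cdot,\cdot)_\pm$. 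Finally, since $\Phi$ turns $W_{T_1}(0_+)$ into $W_{T_2}(0_+)$ and $L_1$ into $L_2$, the parameters $\mathcal{C}_i\in\mathbb{B}(H_+^{(i)},H_-^{(i)})$ describing $L_i$ satisfy $\mathcal{C}_2=\Phi_-\mathcal{C}_1\Phi_+^{-1}$, so $\widetilde\Phi\oplus\widetilde\Phi$ carries the graph $\{a\in\mathfrak{A}_1^{\bot_s}\,|\,\Gamma_-a=\mathcal{C}_1\Gamma_+a\}$ onto the corresponding graph for $T_2$; that is, $\widetilde\Phi$ intertwines the model operators $\mathfrak{T}_1$ and $\mathfrak{T}_2$. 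As each $\mathfrak{T}_i$ is unitarily equivalent to $T_i$ by the construction of \S~\ref{sec4.2}, we conclude $T_1\cong T_2$.

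I expect the main obstacle to be precisely the fibrewise claim in the sufficiency part: that a symplectic—but a priori non-isometric—isomorphism of $\mathcal{B}_{T_i}$ nonetheless preserves the Hermitian structure of the characteristic bundles and therefore yields an honest unitary between the model spaces. Once this is secured, the remaining steps are a matter of the same bookkeeping already carried out in \S~\ref{sec4.2} and \S~\ref{sec5}, mirroring the proof of the analogous rigidity statement for simple symmetric operators in \cite{wang2024complex}.
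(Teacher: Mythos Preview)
Your argument is correct. The necessity part is exactly what the paper has in mind when it says ``clear'', only spelled out; and your worry in the last paragraph is not a real obstacle, since the Hermitian structure on the fibres of the tautological bundle is by definition $\mp i[\cdot,\cdot]$, which any symplectic isomorphism preserves on the nose.

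For sufficiency you take a slightly different route from the paper. You work directly with the given $\Phi\colon\mathcal{B}_{T_1}\to\mathcal{B}_{T_2}$, push the bundle data $E^{(1)},F^{(1)\dag},\mathcal{P}_\lambda,\mathcal{Q}(\lambda,\mu),K^{(1)}$ through $\Phi$, and thereby manufacture a unitary $\widetilde\Phi\colon\mathfrak{H}_1\to\mathfrak{H}_2$ intertwining the two model operators. The paper instead factors through a third object: it picks symplectic isomorphisms $\Phi_i\colon\mathcal{B}_{T_i}\to H$ onto a fixed standard strong symplectic space so that both marked discs land on the \emph{same} $(N(\lambda),L)$ in $W_+(H)$, and then simply invokes the \S~\ref{sec5} construction once to get a single model $\mathfrak{T}$ unitarily equivalent to each $T_i$. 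Your approach is more hands-on and makes the unitary explicit; the paper's is shorter because it treats the \S~\ref{sec5} construction as a black box that already absorbs the choice of symplectic frame, so no kernel comparison is needed.
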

\begin{proof}The necessity part is clear. For two c.n.u. contractions $T_1$ and $T_2$, if their corresponding marked Weyl curve are congruent, we can fix a standard strong symplectic Hilbert space $H=H_+\oplus_\bot H_-$ as in Example \ref{e3} and choose suitable symplectic isomorphisms $\Phi_1: \mathcal{B}_{T_1}\rightarrow H$ and $\Phi_2: \mathcal{B}_{T_2}\rightarrow H$ to map the two marked Nevanlinna discs to the same marked Nevanlinna disc $(N(\lambda),L)$ in $W_+(H)$. Then by the above argument the model operator $\mathfrak{T}$ associated to $(N(\lambda),L)$ is unitarily equivalent to both $T_1$ and $T_2$. As a consequence, $T_1$ and $T_2$ are unitarily equivalent as well.
\end{proof}
Recall that closely related to the characteristic function $\Theta_T(\lambda)$ is the invariant subspace problem. Actually, according to Douglas in \cite{douglas1974canonical}, "Much of the original interest in the theory of canonical models stemmed from the belief that it might play the
decisive role in solving this problem." It is known that certain factorizations of $\Theta_T(\lambda)$ correspond to invariant subspaces of $T$. It is interesting to know how to interpret this in terms of the marked Nevanlinna disc--the geometric counterpart of $\Theta_T(\lambda)$. We shall turn to this topic elsewhere in the futhure.
\bibliographystyle{alpha}

\end{document}